\documentclass[12pt]{article}
\usepackage{graphicx}
\usepackage[centering, margin={0.8in, 0.5in}, includeheadfoot]{geometry}

\usepackage[utf8x]{inputenc}
\usepackage{amssymb,pdfsync,epstopdf,verbatim,gensymb,amsmath,amsthm}
\usepackage{nccmath, mathtools}
\usepackage{textcomp}
\usepackage{amsmath}
\usepackage{float,pbox}
\usepackage{subfig}
\usepackage{amsfonts}
\usepackage{epsfig,color, hyperref}

\usepackage{appendix}

\def\R {{\mathbb R}}


\def\H01{{H_0^1(\Omega)}}
\def\L2{{L^2(\Omega)}}


\usepackage{bmpsize}
\newtheorem{theorem}{Theorem}[section] 

\newtheorem{lemma}{Lemma}[section]

\newtheorem{proposition}{Proposition}
\newtheorem{algorithm}{Algorithm}[section]
\DeclareMathOperator*{\argmin}{arg\,min}

\newcommand{\Beq}{\begin{equation}}
\newcommand{\Eeq}{\end{equation}}
\newcommand{\beq}{\begin{equation*}}
\newcommand{\eeq}{\end{equation*}}
\newcommand{\bal}{\begin{align}}
\newcommand{\eal}{\end{align}}

\renewcommand{\L}{\langle}

\newcommand{\bp}{\begin{prob}}
\newcommand{\ep}{\end{prob}}
\newcommand{\bpr}{\begin{proof}}
\newcommand{\epr}{\end{proof}}

\newcommand{\tred}[1]{{\color{red}{#1}}}

\newcommand{\bel}[1]{\begin{equation}\label{#1}}
\newcommand{\ee}{\end{equation}}

\graphicspath{{./figures/}}

\author{Anwesa Dey \thanks{anwesary@math.utah.edu, Department of Mathematics, University of Utah, John A. Widtsoe Building 
155 S 1400 E, Salt Lake City, 84112, Utah, USA. } \and Alfio Borz\`i \thanks{alfio.borzi@mathematik.uni-wuerzburg.de, Institut f\"ur Mathematik, Universit\"at W\"urzburg, Emil-Fischer-Strasse 30, 97074 W\"urzburg, Germany.} \and Souvik Roy \thanks{souvik.roy@uta.edu, Department of Mathematics, University of Texas at Arlington, 411 S. Nedderman Drive, Arlington, 76019, Texas, USA. }}
\date{}

\begin{document}

\title{\bf A high contrast and resolution reconstruction algorithm in quantitative photoacoustic tomography\thanks{Partially supported by the BMBF-Project iDeLIVER : Intelligent MR Diagnosis of the Liver by Linking Model and Data-driven Processes, US National Science Foundation Grant No: 2309491, and  University of Texas at Arlington, Research Enhancement Program Grant No: 2022-605.}}

\maketitle
\begin{abstract}
{A framework for reconstruction of optical diffusion and absorption coefficients in quantitative
photoacoustic tomography is presented. This framework
is based on a Tikhonov-type functional with a
regularization term promoting sparsity of the absorption
coefficient and a prior involving a Kubelka-Munk
absorption-diffusion relation that allows to obtain superior reconstructions.
The reconstruction problem is formulated as the minimization
of this functional subject to the differential constraint given by a
photon-propagation model. The solution of this problem is obtained
by a fast and robust sequential quadratic hamiltonian algorithm
based on the Pontryagin maximum principle. Results of several numerical
experiments demonstrate that the proposed computational strategy is able to
obtain reconstructions of the optical coefficients with high contrast and resolution
for a wide variety of objects.}

\end{abstract}

{ \bf Keywords}: {Quantitative photoacoustic tomography, nonsmooth PDE optimization, Pontryagin maximum principle, sequential quadratic hamiltonian algorithm.}\\

{ \bf MSC}: {35R30, 49J20, 49K20,  65M08, 82C31}

\section{Introduction}
Photoacoustic tomography (PAT) is an emerging hybrid imaging technique that combines two imaging modalities: optical tomography, with a large contrast of optical parameters in an object, and ultrasound imaging, which provides high resolution images. The PAT technique is based on the photoacoustic effect where an object is exposed to a short-pulse optical radiation that propagates through it and is, in part, partially absorbed. This phenomenon leads to heating and thermal expansion of the region where the propagation occurs, so that acoustic pulses are emitted that travel back to the boundary of the object, 
where they can be measured { \cite{Wang2009,Xia2014}}. {From these measured acoustic pulses, one can obtain the initial pressure distribution and, subsequently, optical coefficients like the absorption and diffusion, through the formulation of inverse problems.}

The reconstruction of the initial pressure distribution denoted with $\mathcal{H}(x),~ x \in \Omega \subset \mathbb{R}^n$, from the measured acoustic pulses on the boundary of the object $\partial\Omega$, can be formulated as an inverse source problem for the wave operator as follows:
{Let $p(x,t)$ and $\mathcal{C}(x)$ denote the pressure and the speed of 
the acoustic wave, respectively. Then $p$ satisfies the following equation
\begin{equation}\label{eq:acoustic_wave}
\begin{aligned}
&p_{tt} - \mathcal{C}^2(x) \, \Delta p = 0, \quad (x,t) \in \Omega\times[0,{T}]\\
& p(x,0) = \mathcal{H}(x), \quad x\in \Omega\\\
&\dfrac{\partial p}{\partial t}(x,0) = 0, \quad x \in \Omega\\
\end{aligned}
\end{equation}
where ${T}$ represents the time period of propagation of the acoustic waves starting with the initial pressure field $\mathcal{H}$. Then, the time-dependent acoustic wave signals of the form 
\begin{equation}\label{eq:acoustic_signal}
p(x,t), ~\dfrac{\partial{p}}{\partial n}(x,t)  \quad(x,t) \in \partial\Omega\times[0,{T}]
\end{equation}
are measured at the acquisition boundary $\partial\Omega$. 
The inverse problem is to recover $\mathcal{H}$ given the data in \eqref{eq:acoustic_signal}. This is also known as the classical PAT inverse problem \cite{Bal10a,Bal11,Bal12,Bal10,Cox09,Finch09}}

The aforementioned inverse problem has been very well studied; see, e.g.,  \cite{Cox09,Finch09}, and the solution to this problem gives the initial pressure distribution $\mathcal{H}$ profile in the entire domain $\Omega$. However, 
this procedure does not give information about the more specific properties of the object like the optical parameters of absorption and diffusion coefficient. {Thus, to reconstruct the optical parameters from the initial pressure distribution, a second inverse problem is solved, which is known as quantitative photoacoustic tomography (QPAT).} {One of the major applications of QPAT is to provide accurate images of the heart for the study of congenital heart diseases \cite{Gao2018}}. Other applications of QPAT include detecting cancerous tissues, thus facilitating biomedical studies \cite{Gao12,Sandall11}. {Notice that there are other optical imaging 
methods that have a structure similar to QPAT in the sense that these methods make use of the photoacoustic effect to determine specific internal features in an object.} For example, 
the fluorescence optical diffuse tomography (FDOT), which is used to detect fluorescing targets in tissues \cite{Ntz06,Schot11}.

However, in QPAT and similar modalities, the accurate reconstruction of the optical parameters poses several theoretical and computational challenges 
that are under investigation by many research groups. Our purpose is to contribute to this research work with a novel QPAT reconstruction framework 
with high resolution and high contrast, which is of paramount importance especially in the context of biomedical imaging for detection of cancerous tissues.

Mathematically, the initial pressure distribution $\mathcal{H}({x})$ is a function of the absorption coefficient $\sigma_a({x})$, of the intensity of radiation $u({x})$, and of the Gr\"{u}neisen coefficient $\Gamma({x})$, which measures the amount of ultrasound generated by the absorbed radiation at the point ${x}$. {In general, 
the intensity of radiation $u$ is a function of space and time and its 
evolution is governed by the radiative transfer equation (RTE) (see e.g., \cite[Section 3.2]{Arridge1999}, \cite[Chapter 7]{Ishimaru1978}, \cite{Sandall11}). However, assuming the use of short light pulses such that the radiation propagates for a very short time at the scale of small acoustic wave lengths, we have that the scattering is significantly larger than absorption (see \cite[Chapter 9] {Ishimaru1978}, \cite[Section 3.3]{Arridge1999}). Thus, the photons are scattered almost uniformly in all directions leading to an almost uniform angular distribution. This phenomenon is prevalent inside biological tissues \cite{Farrell1992,Sandall11} and blood \cite[Chapter 9]{Ishimaru1978}. In this case, it has been shown in \cite[Sec 3.3]{Arridge1999} that the photon current satisfies a steady state assumption, which results in the diffusion approximation of the RTE equation. Therefore, in this setting, the intensity of radiation  $u({x})$ is modelled by the following diffusion equation \cite{Bal11,Bal12,Bal10}:}
\[
\begin{aligned}
-\nabla\cdot D({x}) \, \nabla u({x}) + \sigma_a({x}) \, u({x}) &= 0, \qquad \mbox{ in } \Omega , \,\\
u({x}) &= g({x}), \quad \mbox{ on } \partial\Omega ,\\
\end{aligned}
\]
where $g$ is the known intensity of the radiation that is applied at the boundary of the object.
In this framework, the initial pressure distribution function is given by
\[
\mathcal{H}({x})=\Gamma({x}) \, \sigma_a({x}) \, u({x}),\quad {x} \in \Omega .
\]
The QPAT inverse problem requires to recover the absorption coefficient $\sigma_a({x})$ and the diffusion coefficient $D({x})$, given $\mathcal{H}$ and $g$.

There are several works related to theoretical and computational frameworks for reconstructions in QPAT; see, e.g.,\cite{Bal11,Ban08,Gao15,Halt2018,Yao10}. In the work \cite{Bal10}, it has been shown that, if $\Gamma$ is given, then 
the coefficients $(D,\sigma_a)$ can be uniquely and stably reconstructed based on two different sets of initial pressure distribution data generated by two well-chosen boundary conditions $g_1$ and $g_2$ for two copies of the diffusion equation 
given above. Also in 
\cite{Bal10}, we find a method for reconstruction that relies on the solution of a set of transport and elliptic equations. Further, in \cite{Bal11,Bal12} it has been shown that all three parameters $(D,\sigma_a,\Gamma)$ cannot be reconstructed simultaneously, irrespective of the number of illuminations unless one considers the QPAT inversion problem at different wavelengths. The correspondence between the QPAT inversion problem and parameter estimation with PDE's was illustrated in \cite{Bal13}. Other works on QPAT include: the multiple-source framework in \cite{Zemp10}, and its analysis \cite{Shao11}, a PDE-optimization framework using the radiative transfer equation \cite{Abd05}, the recovery of absorption coefficient of a small absorber \cite{Ammari11}, the reconstruction of absorption and scattering coefficients \cite{Tar12}, and the analysis of the linearised QPAT problem \cite{Halt2018}.  

In addition to the theoretical investigation of the QPAT problem, there are several numerical reconstruction algorithms for solving related inversion problems.  In \cite{Cox06}, the authors develop an iterative method using the radiation map to reconstruct the optical parameters. A non-linear model-based inversion scheme was formulated in \cite{Lau10} for determining chromophore concentrations in 2D. In \cite{Sara13}, a gradient-based scheme for QPAT using the radiative transfer equation is discussed. A new iterative reconstruction algorithm using a different form of the radiation function and Robin-boundary data was proposed in \cite{Ren13}. It 
appears from the simulation results in these works that existing algorithms for QPAT have difficulties in reconstructing accurately both $D$ and $\sigma_a$. Furthermore,  these algorithms do not perform well to reconstruct the optical coefficients with high contrast, which is an inherent property of tissues, thus limiting their range of applicability in biomedical problems. 
 
One of the major reason for the poor reconstruction of $D$ is due to the fact that the initial pressure distribution data does not directly contain information on $D$. 
On the other hand, it has been shown that there is a relation between $D$ and $\sigma_a$ and the reduced scattering coefficient $\sigma_s$, which is given by \cite{Aronson99,Durian98}
\begin{equation}\label{eq:D_sigma}
D = \dfrac{1}{3(\sigma_a+\sigma_s)} .
\end{equation}
This fact shows a deficiency in the use of the diffusion approximation for 
reconstructing $D$, which would require to employ the 
radiative transfer (or transport) equation (RTE) \cite{Ishimaru1978} as 
the adequate model for light transport in tissues. Therefore, although 
the main point in photo acoustic is a thermodynamics phenomenon that explains how absorbed energy is transformed in a pressure wave, also scattering 
is relevant for a successful reconstruction of the diffusion coefficient, 
as it is illustrated by results in, e.g., \cite{Tar12}. 

{However, in some applications a proportionality between 
$\sigma_a$ and $\sigma_s$ appears. In particular, 
the following relation between $\sigma_a$ and $\sigma_s$ has for long been described by the Kubelka-Munk theory in the diffusive 
regime \cite{Kubel31,Hecht76,Wei2003,Dzim2012,Roy2012,Fanjul2020}}. We have 
\begin{equation} \label{eq:KM}
\sigma_s = \dfrac{2R_{\infty}}{(1-R_\infty)^2}\sigma_a,
\end{equation}
where $R_\infty$ is the diffuse reflectance of the biological object. In the near infrared wavelength region, $\R_\infty$ has been experimentally found to be lying somewhere in the range of 0.5-1.25 (see Figure 4 in \cite{Fanjul2020}). 
Furthermore, a similar linear relation between $\sigma_a $ and $\sigma_s$ has also been found to be experimentally true for the 
tissues of brain, breast, heart, lungs, bone, prostate and tumor; see, e.g.,  \cite{Dimo05,Sandall11}, and Table \ref{table:relation}, where we report measured values of the absorption and reduced scattering coefficients for various human tissues. 

\begin{table}[H]
\centering
\caption{Values of the absorption and the reduced scattering coefficients for various tissues at the near-infrared wavelengths \cite{Sandall11}.}
\begin{tabular}{|c| c| c| c|}
\hline
Tissue & Wavelength (nm) &$\sigma_a$ (cm$^{-1}$) & $\sigma_s$ (cm$^{-1}$)\\ [0.5ex]
\hline
Brain  & 760  & 0.11-0.17& 4.0-10.5 \\
Breast  & 760  & 0.031-0.1& 8.3-12.0 \\
Heart  & 661  & 0.12-0.18& 5.22-90.80 \\
Lungs & 661  & 0.49-0.88& 21.14-22.52 \\
Bone  & 760  & 0.07-0.09& 11.9-14.1 \\
Prostate  & 732  & 0.09-0.72& 3.37-29.8 \\
Tumor  & 795  & 0.068-0.102& 12.4-13.1 \\[1ex]
\hline
\end{tabular}
\label{table:relation}
\end{table} 

These results suggest that the reconstruction of the diffusion 
coefficient in the framework of the diffusion approximation can be improved 
if we include our qualitative prior knowledge represented by 
\eqref{eq:D_sigma} and \eqref{eq:KM} in the formulation of our inverse 
problem.

For this purpose, based on the results given in Table \ref{table:relation} and supported by the Kubelka-Munk relation, we construct our prior based on the following tentative relation between the absorption and the reduced scattering coefficients
{\begin{equation}\label{eq:rel}
\sigma_s \approx c~ \sigma_a.
\end{equation}
This choice is based on results of measurements 
indicating that the value of $c$ varies among different biological tissues in the approximate range $(30, 800)$}. Notice that in our approach we interpret \eqref{eq:rel} as a mean approximate relation between $\sigma_a$ and $\sigma_s$ in the tissues 
 (with the value of $c$ arbitrarily chosen in a given interval determined by the 
 application) and aim at using this information for better reconstruction of the QPAT coefficients. 

We demonstrate that the consistent use of \eqref{eq:D_sigma} and \eqref{eq:rel} embedded as a prior in our nonsmooth optimization problem results in a novel reconstruction strategy that allows to obtain reconstructions of $D$ and $\sigma_a$ with high resolution and high contrast. However, it is important 
to emphasize that the high-quality reconstructions that we obtain do not 
necessarily satisfy the relations \eqref{eq:D_sigma} and \eqref{eq:rel}. 
On the other hand, we show that our computational framework is certainly 
valid for biomedical applications with near-infrared light illumination of tissues in order to accurately detect the cancerous ones inside the organs mentioned in Table \ref{table:relation}, where the Kubelka-Munk relation \eqref{eq:rel} holds approximately true.

In our problem formulation, we consider the minimization of a 
Tikhonov-type least squares functional with regularization terms including the relation between $D$ and $\sigma_a$ mentioned above. Further, we 
include terms that promote sparsity in the reconstruction, in order to accommodate the fact 
that, in biomedical application, different tissues appear distributed as almost non-overlapping bounded regions of the domain under consideration. However, 
while the introduction of nonsmooth penalization terms in the functional 
considerably contribute to the ability of our method to provide 
sharp reconstructions as in \cite{Ade2018,Roy2018}, they require the development of sophisticated 
optimization techniques that apply in the nonsmooth case. We refer 
to \cite{Stadler2007} for a work that pioneered the use of $L^1$ penalization 
terms to promote sparsity in PDE optimization and the development of 
semi-smooth Newton (SSN) methods to solve the corresponding problems. 
On the other hand, the implementation of a SSN scheme requires the application of the Hessian of the optimization problem and, for this purpose,
in addition to solving the model equation and its adjoint, one needs
to solve other two linearized equations. This fact and further implementation issues make 
the development of the SSN scheme difficult to practitioners. 

To overcome these difficulties, we extend the sequential quadratic hamiltonian (SQH) algorithm, proposed in \cite{Breit18,Breit19a,Breit19} for nonsmooth PDE optimal control problems, to solve our inverse problem. This algorithm 
is based on the Pontryagin maximum principle (PMP), which is well-known 
in the theory of optimal control of dynamical systems. 
Our focus on the SQH algorithm is motivated 
by the fact that it is easy to implement, it appears very efficient and robust, 
and it is wellposed as an iterative scheme \cite{Breit18,Breit19a}. A specific feature 
of the iterative SQH method is that it relies on a pointwise optimization 
procedure. 

Several numerical experiments are performed 
to demonstrate the computational effectiveness of our methodology. The results of these experiments show that our QPAT inverse problem, 
which includes a prior relation on $D$ and $\sigma_a$, and 
the SQH algorithm solving this problem, result in a 
fast and accurate reconstruction procedure. These are the main novelties 
of our work.

This paper is organized as follows. In the next section, we discuss our 
optimization formulation of the QPAT inverse problem, and 
illustrate our use of a relation between $D$ and $\sigma$. Further, we 
discuss $L^2$ and $L^1$ penalization terms. 
In Section \ref{sec:theory},  we provide theoretical 
consideration on solutions to our minimization problem and discuss their characterization in the PMP framework. Section \ref{sec:numerical_scheme} is devoted to illustrating all details of our 
numerical optimization procedure that includes wellposedness of 
the SQH algorithm and 
the numerical approximation of our PDE governing model and of its 
optimization adjoint. In Section \ref{sec:results}, results of numerical experiments 
are presented that successfully validate our QPAT framework. For this purpose, 
different test cases with synthetic data are considered that allow to 
investigate the role of the optimization parameters and the ability of our 
computational framework to provide reconstructions of the 
coefficients $D$ and $\sigma$ with high contrast and resolution, 
also in the case of data affected by noise. A section of conclusions completes 
this work.

\section{Formulation of a QPAT optimization problem}
We start discussing the governing model of our QPAT inverse problem. 
For this purpose, we specify the dimension $n=2$, and refer 
to the set of coordinates $(x,y)$. 
Let $u(x,y)$ be the total number of photons at a point $(x,y)$ inside an object represented by the domain $\Omega$. 
In our discussion, $\Omega$ is either 
a rectangle or a convex subset of $\R^2$ with boundary $\partial \Omega$ 
of class $C^2$. We assume that $u$ satisfies the following photon-propagation problem
\begin{equation}\label{eq:DA}
\begin{aligned}
-\nabla\cdot (D(x,y) \, \nabla u(x,y)) + \sigma_a(x,y) \, u(x,y) &= 0, \qquad (x,y) \in \Omega\\
u(x,y) &=  g(x,y), \quad (x,y) \in \partial\Omega , 
\end{aligned}
\end{equation}
where $D(x,y)> 0$ represents the diffusion coefficient at $(x,y)$. The non-negative absorption coefficient $\sigma_a$ is given by 
\[
\sigma_a(x,y) = \sigma(x,y)+\sigma_b  ,
\] 
where $\sigma_b$ is a known background absorption coefficient and $\sigma$ 
represents the deviation from this value in correspondence of organs and sites of tumor. {We remark that even though $\sigma$ is a signed function, however later on,  we will impose bounds on $\sigma$ such that $\sigma_a >0$}. Further, the function $g$ represents the known intensity of radiation at the boundary of the object. For the reconstruction of $D$ and $\sigma_a$, we require the knowledge, by measurements, of the optical energy given by 
\begin{equation}\label{eq:optical_energy}
\mathcal{H}(x,y,\sigma,u_i) = \Gamma \, (\sigma(x,y)+\sigma_b) \, u_i(x,y), \qquad i=1,2 ,
\end{equation}
for two sets of boundary radiation functions $g_i$, where $\Gamma$ is the known Gr\"{u}neisen coefficient {that is assumed to be a known constant.}

Our QPAT inverse problem consists in obtaining the unknown pair $(D,\sigma)$, given $H$ and $g$. For this purpose, we formulate a minimization problem with the following cost functional
\begin{equation}\label{eq:cost_functional}
\begin{aligned}
J(D,\sigma,u_1,u_2) &= \frac{\alpha}{2}\sum_{i=1}^2\int_\Omega(\mathcal{H}(x,y,\sigma,u_i) - G_i^{\delta}(x,y))^2~dxdy+\frac{\xi_1}{2}\int_\Omega \sigma^2(x,y)~dxdy\\
&+ \frac{\xi_2}{2}\int_\Omega \left(D(x,y)-\frac{1}{3c~(\sigma(x,y)+\sigma_b)}\right)^2~dxdy
+\gamma \int_\Omega |\sigma(x,y)|~dxdy,
\end{aligned}
\end{equation}
where $G_i^{\delta} \in L^\infty(\Omega)$ is a given noisy measurement function of the initial pressure distribution corresponding to the boundary radiation functions $g_i$, $i=1,2$. 
Therefore the first term in the objective functional $J$ represents a least-squares data fit of the initial pressure distribution measurements. We assume that the background value of the absorption coefficient $\sigma_b$ is known and therefore $\sigma$ equals zero in the background. In this setting, it is reasonable to introduce a $L^2$ and $L^1$ regularization of $\sigma$, which result in the second and last terms in \eqref{eq:cost_functional}. In particular, the $L^1$ term promotes sparsity of $\sigma$, thus enhancing contrast. 

One novelty of our work is the prior given by the third term in \eqref{eq:cost_functional}. To explain this term, we 
recall \eqref{eq:D_sigma} and the related discussion. Hence, by {assuming $c\in [30,800]$, we can make the following approximation 
$$
3 \, (\sigma_a+ \sigma_s) \approx 3 \, (\sigma_a+ c\sigma_a ) 
\approx 3c\sigma_a.
$$}
We remark that even though we will be choosing $c\in [30,800]$ in our numerical experiments, the choice of $c$ is not essential. Thus, we employ the following function as 
prior for the diffusion coefficient $D$
{\begin{equation}\label{eq:D_rel}
 D(x,y)  \approx  \bar{D} :=\frac{1}{3c~(\sigma(x,y)+\sigma_b)}.
\end{equation}
Therefore the third term in $J$ measures the deviation of $D$ in the tissues and tumors from $\bar D$. }

Our QPAT optimization problem is stated as follows
\begin{equation}\label{eq:min_problem}
\begin{aligned}
&\min_{D,\sigma \in D_{ad} \times \Sigma_{ad}}~ J(D,\sigma,u_1,u_2) \\
\mbox{s.t.} \qquad & \mathcal{L}(u_1,D,\sigma,g_1) = 0,\\
& \mathcal{L}(u_2,D,\sigma,g_2) = 0,\\
\end{aligned} 
\end{equation}
(`s.t.' means `subject to') where $ \mathcal{L}(u_i,D,\sigma,g_i) = 0$, $i = 1,2$, denote \eqref{eq:DA} for the boundary radiation functions $g_i$, $i = 1,2$, 
respectively, and our aim is to recover $D$ and $\sigma$ in the following admissible sets
{\begin{equation}\label{eq:admissible_set}
\begin{aligned}
&D_{ad} = \lbrace D \in L^2(\Omega): 0< D_l \leq D(x,y) \leq D_r, \mbox{ a.e. in  } \Omega \rbrace,\\
&\Sigma_{ad} = \lbrace \sigma \in L^2(\Omega): \sigma_l \leq \sigma(x,y) \leq \sigma_r, \mbox{ a.e. in  } \Omega, \rbrace,
\end{aligned}
\end{equation}
where $\sigma_l$ is chosen such that the total absorption coefficient $\sigma_a =\sigma+\sigma_b >0$, e.g., one can choose $\sigma_l = -\sigma_b + \sigma_{\epsilon}$, with $\sigma_{\epsilon} $ a small positive number.}

\section{Analysis of the QPAT inverse problem}
\label{sec:theory}

We remark that our diffusion model \eqref{eq:DA}, with the given 
bounds on the coefficients and Dirichlet boundary conditions, is a well-studied problem and 
we refer to \cite{GilbargTrudinger1998,Lady68} for two classical 
references. Specifically, one can verify that with $D\in D_{ad}$ 
and $\sigma\in \Sigma_{ad}$ our operator is uniformly elliptic 
and the boundedness conditions on the coefficients required in, e.g., 
\cite{GilbargTrudinger1998} are satisfied. Then, assuming 
that the Dirichlet data $g$ corresponds to the trace of a $H^1(\Omega)$ function, 
the following proposition holds whose proof can be found e.g., in \cite{Lady68}. 

\begin{proposition}\label{th:existence_forward}
 Let $D\in D_{ad}$, $\sigma\in \Sigma_{ad}$. Then there exists a unique solution $u \in H^1(\Omega)$ to \eqref{eq:DA}.
 \end{proposition}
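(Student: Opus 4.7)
The plan is to carry out a standard variational/Lax--Milgram argument, after lifting the Dirichlet data to the interior so that the equation is posed with homogeneous boundary conditions. Specifically, I assume $g$ is the trace of a function $\tilde g\in H^1(\Omega)$, and I write $u = w + \tilde g$ with $w\in H_0^1(\Omega)$. Plugging into \eqref{eq:DA}, $w$ is a weak solution provided
\begin{equation*}
a(w,v) = \ell(v) \qquad \forall v\in H_0^1(\Omega),
\end{equation*}
where
\begin{equation*}
a(w,v) = \int_\Omega D\,\nabla w\cdot\nabla v\,dxdy + \int_\Omega \sigma_a\,w\,v\,dxdy,
\end{equation*}
\begin{equation*}
\ell(v) = -\int_\Omega D\,\nabla \tilde g\cdot\nabla v\,dxdy - \int_\Omega \sigma_a\,\tilde g\,v\,dxdy.
\end{equation*}
The existence and uniqueness of $u\in H^1(\Omega)$ is then equivalent to that of $w\in H_0^1(\Omega)$.

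Next, I would verify the hypotheses of the Lax--Milgram theorem on $H_0^1(\Omega)$ endowed with the norm $\|v\|_{H_0^1}^2 = \|\nabla v\|_{L^2}^2 + \|v\|_{L^2}^2$ (equivalently, by Poincar\'e, $\|\nabla v\|_{L^2}^2$). Using the admissible sets \eqref{eq:admissible_set}, we have $0<D_l\le D(x,y)\le D_r$ a.e., and the choice $\sigma_l=-\sigma_b+\sigma_\epsilon$ gives $\sigma_\epsilon \le \sigma_a(x,y) \le \sigma_r+\sigma_b$ a.e. Boundedness of $a$ follows from Cauchy--Schwarz together with these $L^\infty$ bounds. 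Coercivity is where the positive lower bounds on $D$ and on $\sigma_a$ enter:
\begin{equation*}
a(w,w) \;\ge\; D_l\,\|\nabla w\|_{L^2}^2 + \sigma_\epsilon\,\|w\|_{L^2}^2 \;\ge\; \min(D_l,\sigma_\epsilon)\,\|w\|_{H_0^1}^2.
\end{equation*}
Boundedness of $\ell$ likewise follows from Cauchy--Schwarz, the $L^\infty$ bounds, and $\tilde g\in H^1(\Omega)$.

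With the Lax--Milgram theorem I then obtain a unique $w\in H_0^1(\Omega)$ solving the weak formulation, and recover $u=w+\tilde g\in H^1(\Omega)$ as the unique weak solution of \eqref{eq:DA}. Uniqueness can also be phrased directly: if $u_1,u_2$ are two $H^1$ solutions with the same boundary data, then $w=u_1-u_2\in H_0^1(\Omega)$ satisfies $a(w,v)=0$ for all $v\in H_0^1(\Omega)$; taking $v=w$ and using coercivity forces $w\equiv 0$.

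The main obstacle is really only a bookkeeping one rather than a conceptual one: making sure that both $D$ and $\sigma_a$ are uniformly bounded away from zero so that the bilinear form is truly coercive on $H_0^1(\Omega)$. The diffusion bound is immediate from $D_{ad}$, while the absorption bound hinges on the parenthetical remark after \eqref{eq:admissible_set} that $\sigma_l$ is chosen so that $\sigma_a\ge\sigma_\epsilon>0$; without this one would have to appeal to Poincar\'e to absorb a possibly vanishing zeroth-order term. Beyond that, the assumption that $g$ is the trace of an $H^1(\Omega)$-function is what makes the lifting $\tilde g$ available and the right-hand side $\ell$ well defined, and this is precisely the hypothesis under which the proposition is stated.
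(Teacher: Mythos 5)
Your Lax--Milgram argument is correct and is essentially the classical variational proof that the paper invokes by citation: the authors do not write out a proof of Proposition \ref{th:existence_forward}, but justify it by noting that the operator is uniformly elliptic with bounded measurable coefficients and that $g$ is the trace of an $H^1(\Omega)$ function, deferring the details to \cite{Lady68} (see also \cite{GilbargTrudinger1998}). You have simply supplied those details, and your observation that coercivity of the zeroth-order term rests on the choice $\sigma_l=-\sigma_b+\sigma_{\epsilon}$ guaranteeing $\sigma_a\ge\sigma_{\epsilon}>0$ (with Poincar\'e as a fallback on the bounded domain $\Omega$) is exactly the point the admissible set \eqref{eq:admissible_set} is designed to secure.
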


Additional regularity of the solution, that is, 
$u \in H^1(\Omega) \cap H^2(\Omega)$,  
is attained at the interior of $\Omega$ if $D$ is assumed Lipschitz 
continuous, and this regularity holds globally if $\partial \Omega$ is of 
class $C^2$; see \cite{GilbargTrudinger1998}. 
Further results can be found in, e.g., \cite{Alberti2018}. 
In particular, we refer to \cite{Cianchi2015,Mingione2006} 
for the problem of establishing boundedness of the gradient of 
solutions to the diffusion model.

With Proposition \ref{th:existence_forward}, we have that the map $(D,\sigma) \mapsto u=u(D,\sigma)$, where $u$ 
is the solution to \eqref{eq:DA} for given 
$(D,\sigma) \in D_{ad}\times\Sigma_{ad} \subset L^\infty(\Omega) \times L^\infty(\Omega)$, is well defined. Furthermore, this map is continuous as shown 
below; see also \cite{kuchment}.

\begin{lemma}\label{sqh20}
Let $\Omega \subset \R^2$ and consider any two pairs $(u_1,D_1,\sigma_1), \, (u_2,D_2,\sigma_2) \in H^1(\Omega) \times D_{ad}  \times \Sigma_{ad}$ 
that satisfy \eqref{eq:DA} with $g$ the trace 
 of a $H^1(\Omega)$ function. Then the following estimate holds 
$$
\| u_1 - u_2 \|_{H^1 (\Omega)} 
\leq C_1 \, \big( \| D_1 - D_2 \|_{L^\infty(\Omega)} + \| \sigma_1 - \sigma_2 \|_{L^\infty(\Omega)} \big) .
$$

Furthermore, if for the given pairs the gradient of $u_1$ (respectively $u_2$), is essentially bounded, then the following estimate holds 
$$
\| u_1 - u_2 \|_{H^1 (\Omega)} 
\leq C_2 \, \big( \| D_1 - D_2 \|_{L^2(\Omega)} + \| \sigma_1 - \sigma_2 \|_{L^2(\Omega)} \big) .
$$

\end{lemma}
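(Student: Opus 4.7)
The plan is to derive the difference equation satisfied by $w=u_1-u_2$, test it against itself in the usual weak formulation, and then estimate the two forcing terms that arise from the differences of the coefficients.

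First I would set $w = u_1 - u_2$ and write $\sigma_{a,i} = \sigma_i + \sigma_b$. Subtracting the two instances of \eqref{eq:DA} and using $\sigma_{a,1}-\sigma_{a,2}=\sigma_1-\sigma_2$, I obtain that $w$ satisfies
\begin{equation*}
-\nabla\cdot(D_1\nabla w) + \sigma_{a,1}\, w \;=\; \nabla\cdot\bigl((D_1-D_2)\nabla u_2\bigr) - (\sigma_1-\sigma_2)\, u_2 \qquad\text{in }\Omega,
\end{equation*}
with $w=0$ on $\partial\Omega$ (since both solutions share the boundary datum $g$). Hence $w\in H_0^1(\Omega)$ by Proposition \ref{th:existence_forward} applied to each $u_i$.

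Next I would test this equation against $w$. On the left-hand side, using $D_1\geq D_l>0$, the non-negativity of $\sigma_{a,1}$, and Poincaré's inequality (valid because $w$ vanishes on $\partial\Omega$), one gets the coercive bound
\begin{equation*}
\int_\Omega D_1|\nabla w|^2 + \sigma_{a,1}\,w^2\,dx \;\geq\; c_0\,\|w\|_{H^1(\Omega)}^2,
\end{equation*}
for some $c_0>0$ depending on $D_l$ and $\Omega$. On the right-hand side, an integration by parts (the boundary contribution vanishes since $w|_{\partial\Omega}=0$) yields
\begin{equation*}
\Bigl|\!-\!\!\int_\Omega (D_1-D_2)\nabla u_2\cdot\nabla w\,dx - \int_\Omega (\sigma_1-\sigma_2)\,u_2\,w\,dx\Bigr|.
\end{equation*}

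For the first estimate, I would apply Hölder's inequality in the form $L^\infty\cdot L^2\cdot L^2$ to bound this by
\begin{equation*}
\bigl(\|D_1-D_2\|_{L^\infty}\|\nabla u_2\|_{L^2} + \|\sigma_1-\sigma_2\|_{L^\infty}\|u_2\|_{L^2}\bigr)\,\|w\|_{H^1},
\end{equation*}
and then invoke the standard a priori bound $\|u_2\|_{H^1}\leq C\,\|g\|_{H^{1/2}(\partial\Omega)}$ from Proposition \ref{th:existence_forward} (the bound is uniform in $(D_2,\sigma_2)$ because the ellipticity constants are controlled by $D_l, D_r, \sigma_l, \sigma_r$). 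Combining with the coercive lower bound and dividing by $\|w\|_{H^1}$ produces the first inequality with constant $C_1$. For the second estimate, the same identity is estimated in $L^2\cdot L^2\cdot L^\infty$ fashion: the two differences $D_1-D_2$ and $\sigma_1-\sigma_2$ are pulled out in $L^2$, while $\nabla u_2$ and $u_2$ are absorbed via their essential sup-norms (the boundedness of $u_2$ itself follows from $\nabla u_2\in L^\infty$ and $u_2|_{\partial\Omega}=g$, or directly from the maximum principle applied to \eqref{eq:DA}). This yields the second inequality with constant $C_2$.

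The main subtle point is ensuring that the constants $C_1,C_2$ do not blow up along sequences in $D_{ad}\times\Sigma_{ad}$; this is guaranteed by the uniform ellipticity furnished by the bounds $D_l\leq D\leq D_r$ and $\sigma_l\leq\sigma\leq\sigma_r$ in the admissible sets \eqref{eq:admissible_set}, which makes the a priori $H^1$-estimate for each $u_i$ uniform. The second estimate is otherwise conditional on the extra hypothesis that $\nabla u_i\in L^\infty(\Omega)$, a regularity that can be secured under mild assumptions on $D$ and $\partial\Omega$ as indicated in the paragraph following Proposition \ref{th:existence_forward}.
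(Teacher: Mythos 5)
Your proposal is correct and follows essentially the same route as the paper: form the weak difference identity, test with $w=u_1-u_2\in H_0^1(\Omega)$, use the uniform coercivity supplied by $D_l$ and $\sigma_l+\sigma_b$, and estimate the right-hand side by H\"older in the $L^\infty\cdot L^2\cdot L^2$ pattern for the first bound and the $L^2\cdot L^\infty\cdot L^2$ pattern for the second. The only cosmetic differences are that you invoke Poincar\'e where the paper uses $c_1=\min\{D_l,\sigma_l+\sigma_b\}$ directly, and you add a (correct but unneeded) remark on the uniformity of the a priori bound for $u_2$.
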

\begin{proof}
The two pairs satisfy both the diffusion equation in weak form. We have
$$
\left(D_i \, \nabla u_i, \nabla v \right ) + \left(\tilde{\sigma}_i \, u_i, v \right ) =0, 
\qquad v \in H_0^1(\Omega) ,
$$
where $\tilde{\sigma}_i = \sigma_i + \sigma_b$ and $u_i = g$ on the boundary, $i=1,2$. 

Therefore we have 
$$
\left(D_1 \, \nabla u_1, \nabla v \right ) + \left(\tilde{\sigma}_1 \, u_1, v \right )  
= \left(D_2 \, \nabla u_2, \nabla v \right ) + \left(\tilde{\sigma}_2 \, u_2, v \right ) . 
$$
Thus, by algebraic manipulation, we obtain 
$$
\left( D_1 \, \left[ \nabla u_1 - \nabla u_2 \right] , \nabla v \right ) 
+ \left(\tilde{\sigma}_1 \,  \left[  u_1 - u_2 \right] , v \right ) 
 = \left(\left[ D_2 - D_1 \right] \, \nabla u_2, \nabla v \right ) 
+ \left(\left[ \sigma_2 - \sigma_1 \right] \, u_2, v \right ) . 
$$
In particular, taking $v=u_1-u_2$, we have 
\begin{align*}
& \left( D_1 \, \nabla \left[ u_1 - u_2 \right] , \nabla \left[  u_1 - u_2 \right] \right ) 
+ \left(\tilde{\sigma}_1 \,  \left[  u_1 - u_2 \right] , \left[  u_1 - u_2 \right] \right ) \\
&=\left(\left[ D_2 - D_1 \right] \, \nabla u_2, \nabla \left[  u_1 - u_2 \right] \right ) 
+ \left(\left[ \sigma_2 - \sigma_1 \right] \, u_2, \left[  u_1 - u_2 \right] \right ) . 
\end{align*}
Since $D_i \in D_{ad}$ and $\sigma_i \in \Sigma_{ad}$, we have 
$$
c_1 \, \| u_1 - u_2 \|_{H^1(\Omega)}^2  
\le \left(\left[ D_2 - D_1 \right] \, \nabla u_2, \nabla \left[  u_1 - u_2 \right] \right ) 
+ \left(\left[ \sigma_2 - \sigma_1 \right] \, u_2, \left[  u_1 - u_2 \right] \right ) . 
$$
where $c_1 = \min \{D_l, \sigma_l + \sigma_b \}$. Hence, considering 
that both $D$ and $\sigma$ are bounded, we have the estimate 
$$
c_1 \, \| u_1 - u_2 \|_{H^1(\Omega)}  \le c_2 \, \big(  \| D_1 - D_2 \|_{L^\infty(\Omega)} + \| \sigma_1 - \sigma_2 \|_{L^\infty(\Omega)}   \big) ,
$$
where $c_2=\max \{ \|u_2\|_{L^2(\Omega)} , \|\nabla u_2\|_{L^2(\Omega)} \}$. 
Hence, the first statement of the lemma is proved with $C_1=c_2/c_1$. 

Further, by the assumption that $\| \nabla u_2 \| _{L^{\infty}(\Omega)} \le M$ 
(resp. $\| \nabla u_1 \| _{L^{\infty}(\Omega)} \le M$),  
for some positive constant $M$, we have 
\begin{align*}
c_1 \, \| u_1 - u_2 \|_{H^1(\Omega)}^2  
& \le  \| \nabla u_2 \|_{L^\infty(\Omega)}    \, \| D_2 - D_1 \|_{L^2(\Omega)} 
\, \|\nabla \left[  u_1 - u_2 \right] \|_{L^2(\Omega)} \\
&+ \| u_2 \|_{L^\infty(\Omega)}    \, \| \sigma_2 - \sigma_1 \|_{L^2(\Omega)} 
\, \|  u_1 - u_2 \|_{L^2(\Omega)}   \\
& \le \big( \| \nabla u_2 \|_{L^\infty(\Omega)}    \, \| D_2 - D_1 \|_{L^2(\Omega)} 
+ \| u_2 \|_{L^\infty(\Omega)}    \, \| \sigma_2 - \sigma_1 \|_{L^2(\Omega)} 
 \big) \, \|  u_1 - u_2 \|_{H^1(\Omega)}   
\end{align*}
Therefore the claim of the lemma follows from the following result 
$$
c_1 \, \| u_1 - u_2 \|_{H^1(\Omega)}  \le c_3 
\, \big(  \| D_1 - D_2 \|_{L^2(\Omega)} + \| \sigma_1 - \sigma_2 \|_{L^2(\Omega)}   \big) ,
$$
where $c_3=\max \{ \|u_2\|_{L^\infty(\Omega)} , \|\nabla u_2\|_{L^\infty(\Omega)} \}$. Thus, the second statement is proved with $C_2=c_3/c_1$. 
\end{proof}

With this preparation, we can introduce the following reduced cost functional 
\begin{equation}\label{eq:reduced_cost}
\hat{J}(D,\sigma) = J(D,\sigma,u_1(D,\sigma),u_2(D,\sigma)).
\end{equation}
Notice that, with this functional, our QPAT optimization problem can 
be equivalently formulated as the following unconstrained 
minimization problem
\begin{equation}
\min_{(D,\sigma) \in D_{ad}\times\Sigma_{ad}} \hat{J}(D,\sigma) .
\label{extraMin}
\end{equation}

The proof of existence of a minimizer 
for \eqref{extraMin} by using the classical direct method 
is a delicate issue since for the minimizing 
sequences $\{ D^k \} \subset L^\infty(\Omega)$ we do not have 
the strong convergence required to prove fulfilment of the 
differential constraint \eqref{eq:DA} in the limit of the 
minimizing process; see, e.g., Chapter VI of \cite{BanksKunisch1989} for a related framework. However, this would be possible assuming a compact set of admissible controls, e.g., 
{$\tilde D_{ad} = D_{ad} \cap V_l$ where 
$V_l:=\{ v \in H^{1}(\Omega)\, | \, \|v\|_{H^{1}\left(\Omega\right)} \le l \}$ where $l$ is a given positive constant}; see, e.g., \cite{LeitaoTeixeira2015} for a 
discussion on this setting. Nevertheless, since these are sufficient and 
not necessary conditions for existence of an optimal pair $(D^*,\sigma^*)$, 
we prefer to work with the 
admissible sets defined in \eqref{eq:admissible_set} that appear 
convenient in applications, and assume existence 
of the optimal optical coefficients  $(D^*,\sigma^*)$. Our 
focus is the construction of a method that efficiently 
computes these coefficients. 

Now, concerning the characterization of solutions to our QPAT optimization problem, we can use subdifferential calculus to derive 
first-order necessary optimality conditions \cite{Clarke1983,Ulbrich2011}. 
In particular, if $\hat{J}(D,\sigma) $ is Fr\'echet differentiable, then 
the optimal pair $(D^*,\sigma^*)$ must satisfy 
$$
\left\langle \nabla \hat{J}(D^*,\sigma^*), (D,\sigma)-(D^*,\sigma^*) \right\rangle \ge 0 , \qquad  (D,\sigma) \in D_{ad}\times \Sigma_{ad} ,
$$
where $ \langle \cdot, \cdot\rangle$ denotes the 
duality product in $L^2$. 

Alternatively, we can use the Pontryagin maximum principle that can be 
proven in our case provided that the gradient of the adjoint variable 
is essentially bounded. However, we can also apply a linearized 
version of the PMP as discussed in \cite{Sumin1989}. In both 
cases, the starting point of the PMP approach is the construction of the Hamiltonian-Pontryagin (HP) function $H:\mathbb{R}^2 \times [D_l,D_r]\times[\sigma_l,\sigma_r]\times \mathbb{R}^4$ that is defined as follows (see \cite{Breit18,Breit19a,Breit19})
\begin{equation}\label{eq:Hamiltonian1}
\begin{aligned}
&H(x,y,D(x,y),\sigma(x),u_1(x,y),u_2(x,y),q_1(x,y),q_2(x,y))\\
 &= \frac{\alpha}{2}\sum_{i=1}^2(\mathcal{H}(x,y,D(x,y),\sigma(x,y),u_i(x,y)) - G_i^{\delta}(x,y))^2+\frac{\xi_1}{2}\sigma^2(x,y)\\
&+ \frac{\xi_2}{2} \left(D(x,y)-\bar{D}(x,y) \right)^2+\gamma  |\sigma(x)|\\
&+\sum_{i=1}^2D(x,y) \nabla u_i(x,y)\cdot \nabla q_i(x,y)+ \sum_{i=1}^2(\sigma(x,y)+\sigma_b) \, u_i(x,y) \, q_i(x,y),
\end{aligned}
\end{equation}
{where $q_i,~ i=1,2$ solve the following adjoint equations, which are derived from the Lagrangian and Hamiltonian framework \cite{Alfio2012}}
\begin{equation}\label{eq:DA_adj}
\begin{aligned}
-\nabla\cdot (D(x,y) \, \nabla q_i(x,y))) + (\sigma(x,y)+\sigma_b) \, q_i(x,y) &= -\alpha\Gamma~(\sigma(x,y)+\sigma_b)  \, [\mathcal{H}(x,y,\sigma,u_i)-G_i^\delta], ~ \mbox{ in } \Omega\\
q_i(x,y) &=  0, \qquad \mbox{ on } \partial\Omega,
\end{aligned}
\end{equation}

Notice that similar consideration, as for the diffusion model, concerning 
existence and regularity of solutions apply to these elliptic problems.

Then subject to appropriate conditions the PMP characterization of a minimum of the minimization problem \eqref{eq:min_problem} is given by the following theorem (expressed 
in the equivalent form of a minimum principle)
\begin{theorem}\label{th:PMP}
Let $(D^*,\sigma^*,u_1^*,u_2^*)$ be a solution to \eqref{eq:min_problem}. Then
\begin{equation}\label{eq:PMP1}
\begin{aligned}
&H(x,y,D^*(x,y),\sigma^*(x,y),u_1^*(x,y),u_2^*(x,y),q_1^*(x,y),q_2^*(x,y))\\
 &=  \mathop{\min_{(v_1,v_2)~\in [D_l,D_r]\times [\sigma_l,\sigma_r]}} H(x,y,v_1,v_2,u_1^*(x,y),u_2^*(x,y),q_1^*(x,y),q_2^*(x,y)),
\end{aligned}
\end{equation}
for almost all $(x,y) \in \Omega$, and $u_i^*,~i=1,2$ and $q_i^*,~i=1,2$ solve \eqref{eq:DA} and \eqref{eq:DA_adj}, respectively, with $D=D^*$ and $\sigma=\sigma^*$. 
\end{theorem}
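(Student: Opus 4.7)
The plan is to establish the theorem via the classical needle (spike) variation technique, combined with the continuity estimate provided in Lemma \ref{sqh20}. The Hamiltonian-Pontryagin function in \eqref{eq:Hamiltonian1} has been constructed precisely so that, after formal integration by parts and use of the boundary conditions $u_i=g_i$, $q_i=0$, the Lagrangian associated with the constrained problem \eqref{eq:min_problem} is expressible as $\int_\Omega H\,dx\,dy$. Accordingly, the adjoint system \eqref{eq:DA_adj} is what one obtains by requiring stationarity of the Lagrangian in $u_i$, and we take this derivation as given. The novelty is then the pointwise minimum principle \eqref{eq:PMP1} with respect to the control variables $(D,\sigma)$.

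First I would set up the perturbation. Fix any point $(x_0,y_0)\in\Omega$ which is a Lebesgue point of all relevant integrands (this excludes only a null set). Fix arbitrary $(v_1,v_2)\in[D_l,D_r]\times[\sigma_l,\sigma_r]$. For sufficiently small $\eps>0$, let $B_\eps=B_\eps(x_0,y_0)\subset\Omega$ and define the needle-perturbed controls
\begin{equation*}
D_\eps(x,y)=\begin{cases} v_1 & (x,y)\in B_\eps\\ D^*(x,y) & \text{otherwise}\end{cases},\qquad
\sigma_\eps(x,y)=\begin{cases} v_2 & (x,y)\in B_\eps\\ \sigma^*(x,y) & \text{otherwise}\end{cases}.
\end{equation*}
By construction $(D_\eps,\sigma_\eps)\in D_{ad}\times\Sigma_{ad}$, so optimality of $(D^*,\sigma^*)$ yields $\hat J(D_\eps,\sigma_\eps)-\hat J(D^*,\sigma^*)\ge 0$. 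Denote $u_i^\eps=u_i(D_\eps,\sigma_\eps)$ and $\delta u_i=u_i^\eps-u_i^*$. Lemma \ref{sqh20} gives $\|\delta u_i\|_{H^1(\Omega)}\le C\,(\|D_\eps-D^*\|_{L^2}+\|\sigma_\eps-\sigma^*\|_{L^2})= O(\eps)$, since both differences are supported on $B_\eps$ which has measure $O(\eps^2)$.

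Next I would plug the perturbation into the reduced cost and use the adjoint equations \eqref{eq:DA_adj} (tested against $\delta u_i$) to absorb all terms linear in $\delta u_i$ that come from the least-squares data-fit term. What remains is
\begin{equation*}
\hat J(D_\eps,\sigma_\eps)-\hat J(D^*,\sigma^*)=\int_{B_\eps}\!\Big[H(x,y,v_1,v_2,u^*,q^*)-H(x,y,D^*,\sigma^*,u^*,q^*)\Big]\,dx\,dy+o(|B_\eps|),
\end{equation*}
where the $o(|B_\eps|)$ remainder collects products like $(D_\eps-D^*)\nabla\delta u_i\cdot\nabla q_i^*$ and $(\sigma_\eps-\sigma^*)\delta u_i\,q_i^*$, each of order $\eps\cdot\|\delta u_i\|_{H^1}=o(\eps^2)$ by Cauchy-Schwarz together with the continuity estimate. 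Dividing by $|B_\eps|$, letting $\eps\to 0$, and invoking the Lebesgue differentiation theorem at $(x_0,y_0)$ yields
\begin{equation*}
H(x_0,y_0,D^*(x_0,y_0),\sigma^*(x_0,y_0),u^*,q^*)\le H(x_0,y_0,v_1,v_2,u^*,q^*).
\end{equation*}
Since $(v_1,v_2)$ was arbitrary and this holds for almost every $(x_0,y_0)$, the minimum principle \eqref{eq:PMP1} follows.

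The main obstacle is making the $o(|B_\eps|)$ remainder rigorous: one needs the sensitivity bound $\|\delta u_i\|_{H^1}=O(\eps)$ together with a control on $\nabla q_i^*$, the latter following from the elliptic regularity discussion after Proposition \ref{th:existence_forward} applied to the adjoint system. A secondary subtlety is that the $L^1$ term $\gamma|\sigma|$ in $H$ is nonsmooth; however, since the minimum principle is stated pointwise over a compact set $[\sigma_l,\sigma_r]$ on which $v\mapsto|v|$ is continuous, the nonsmoothness poses no analytical difficulty here—it is precisely the advantage of the PMP formulation over a gradient-based optimality system, and is why the SQH algorithm in the next section will be well suited to this problem.
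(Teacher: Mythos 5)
First, a point of comparison: the paper does not actually prove Theorem \ref{th:PMP}. It states the result ``subject to appropriate conditions,'' remarks beforehand that the PMP ``can be proven in our case provided that the gradient of the adjoint variable is essentially bounded,'' and otherwise defers to a linearized version in \cite{Sumin1989}. So your needle-variation sketch attempts to supply an argument the authors deliberately left out, and the overall template (spike perturbation at a Lebesgue point, adjoint equation to absorb the terms linear in $\delta u_i$, Lebesgue differentiation) is the standard and appropriate one.

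There is, however, a genuine gap at the decisive step, and it is exactly the step the authors are hedging against. Your remainder contains the term $\int_{B_\eps}(D_\eps-D^*)\,\nabla\delta u_i\cdot\nabla q_i^*\,dx\,dy$, which you claim is $o(|B_\eps|)$ because it is ``of order $\eps\cdot\|\delta u_i\|_{H^1}$.'' But Lemma \ref{sqh20} gives $\|\delta u_i\|_{H^1}=O(\eps)$ (the $L^2$ norm of a bounded perturbation supported on $B_\eps$ is $O(|B_\eps|^{1/2})=O(\eps)$ in 2D), so Cauchy--Schwarz yields only $\|(D_\eps-D^*)\nabla q_i^*\|_{L^2}\,\|\nabla\delta u_i\|_{L^2}=O(\eps)\cdot O(\eps)=O(\eps^2)=O(|B_\eps|)$ --- the \emph{same} order as the leading term, not smaller; writing $O(\eps^2)$ as $o(\eps^2)$ is where the argument breaks, and dividing by $|B_\eps|$ does not kill this contribution. (The zeroth-order remainder $(\sigma_\eps-\sigma^*)\,\delta u_i\,q_i^*$ \emph{is} genuinely $o(|B_\eps|)$, e.g.\ by H\"older together with the embedding $H^1(\Omega)\hookrightarrow L^4(\Omega)$, so the difficulty is specific to the control entering the principal part of the operator.) To close the gap you would need a localized sensitivity estimate of the form $\|\nabla\delta u_i\|_{L^2(B_\eps)}=o(\eps)$ or a refined second-order expansion; indeed, spike variations of a diffusion coefficient are known to produce homogenization (H-limit) effects, which is precisely why the pointwise minimum principle in the $D$ variable is delicate and why the paper states the theorem only ``subject to appropriate conditions'' and points to a linearized PMP. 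As written, your argument establishes the minimum principle in $\sigma$ but not in $D$.
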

 We can rewrite condition \eqref{eq:PMP1} as follows
\begin{equation}\label{eq:PMP}
(D^*(x,y),\sigma^*(x,y)) =  \mathop{\argmin_{(v_1,v_2)~\in [D_l,D_r]\times [\sigma_l,\sigma_r]}} H(x,y,v_1,v_2,u_1^*(x,y),u_2^*(x,y),q_1^*(x,y),q_2^*(x,y)) .
\end{equation}
We note that \eqref{eq:PMP} represents a pointwise optimality criteria at each $(x,y)$ and does not involve any derivatives with respect to the optimization variables $D$ 
and $\sigma$. In the next section, we discuss an efficient and robust numerical scheme 
based on \eqref{eq:PMP} that solves \eqref{eq:min_problem} (resp. \eqref{extraMin}) to obtain $D^*$ and $\sigma^*$.

\section{The SQH algorithm for solving the QPAT inverse problem}\label{sec:numerical_scheme}

The sequential quadratic hamiltonian (SQH) algorithm 
has been proposed in \cite{Breit18,Breit19}, as a new 
variant of the successive approximations methods; see \cite{Chernousko2007} 
for an early review. It includes an adaptive 
quadratic penalty of the updates that results in an 
augmented Hamiltonian as first proposed in \cite{SakawaShindo1980}. 
In this way, an efficient and robust iterative procedure is obtained that is able 
to solve nonsmooth optimization problems governed by PDEs. 

In our SQH implementation, the augmented HP function is given by  
\begin{equation}\label{eq:augmented_Hamiltonian}
H_\epsilon(x,y,D,\sigma ,\tilde{D},\tilde{\sigma},u_1 ,u_2 ,q_1 ,q_2 )=H(x,y,D ,\sigma ,u_1 ,u_2 ,q_1 ,q_2 ) + \epsilon \, [(D -\tilde{D} )^2+(\sigma -\tilde{\sigma} )^2] , 
\end{equation}
{where $\epsilon > 0$ is a penalization parameter that is chosen 
adaptively at each step of the SQH iteration process: a larger value of $\epsilon$ is chosen if a sufficient decrease of the functional $J$ is not observed while a smaller value of $\epsilon$ is chosen if $J$ decreases sufficiently.}  Specifically, if $\tilde{D}$ and 
$\tilde{\sigma}$ denote a previous approximation to the diffusion and 
absorption coefficients sought, then the purpose of quadratic term $\epsilon \,[ (D-\tilde{D})^2+(\sigma-\tilde{\sigma})^2] $ is to have the pointwise minimizer of 
$H_\epsilon$, and thus an update to $D$ and $\sigma$, that is 
close to the previous values $\tilde{D}$ and 
$\tilde{\sigma}$ as much as $\epsilon$ is large. We also remark that during a step of the optimization problem, that is, for a minimization 
sweep on all grid points of the $(x,y)$ mesh and, during this process, the values of $u$ and $q$ are those obtained in the previous iteration. The SQH algorithm is given as a pseudocode as follows. 

\begin{algorithm}[SQH algorithm]\label{eq:SQH}\
\begin{enumerate}

\item  Choose $\epsilon > 0$, $\kappa >0$, $\lambda > 1$, $\zeta \in (0,1)$, $\rho \in (0,\infty)$, $(D^0,\sigma^0 )\in D_{ad}\times\Sigma_{ad}$

\item Compute $u_i^0(D^0,\sigma^0)$ and $q_i^0(D^0,\sigma^0,u_i^0)$ for $i=1,2$. Set $k = 0$.
\item Find $(D,\sigma )\in D_{ad}\times\Sigma_{ad}$ such that 
\[
H_\epsilon(x,y,D,\sigma,D^k,\sigma^k,u_1^k,u_2^k,q_1^k,q_2^k) = \mathop{\min_{w \in [D_l,D_r]}}_{z \in [\sigma_l,\sigma_r]} H_\epsilon(x,y,w,z,D^k,\sigma^k,u_1^k,u_2^k,q_1^k,q_2^k)
\]
for all $(x,y)\in \Omega.$
\item Calculate $u_i(D,\sigma)~, i=1,2$, and set $\tau = \|D-D^k\|^2_{L^2(\Omega)}+\|\sigma-\sigma^k\|^2_{L^2(\Omega)}.$
\item If $J(D,\sigma, u_1,u_2) - J(D^k,\sigma^k, u_1^k, u_2^k) > -\rho \, \tau$, choose $\epsilon = \lambda \, \epsilon$,\\
else choose $\epsilon = \zeta \, \epsilon$, $u_i^{k+1} = u_i$, $D^{k+1} = D$, $\sigma^{k+1} = \sigma$, and calculate the 
adjoint variables $q_i^{k+1}$, $i=1,2$, corresponding to $D^{k+1}$, $\sigma^{k+1}$, $ u_i^{k+1}$, $i=1,2$.

\item Set $k = k+1$
\item If $\tau < \kappa$, STOP and return optimal $(D^k,\sigma^k)$ else go to Step 3.   
\item end
\end{enumerate}
\end{algorithm}
{The pointwise minimization in Step 3 of this algorithm is carried out by a direct search process, i.e., we divide the domain $[D_l,D_r] \times [\sigma_l,\sigma_r]$ into a two-dimensional discrete grid, evaluate $H_\epsilon$ on this grid, and then choose a grid point $(w,z)$ at which $H_\epsilon$ is minimum.} We remark that in Step 5 of this algorithm, if the inequality $J(D,\sigma, u_1,u_2) - J(D^k,\sigma^k, u_1^k, u_2^k) > -\rho \, \tau$ is true this means that no 
sufficient decrease of the value of the objective functional $J$ as been achieved. 
In this case, a larger value of $\epsilon$ is taken (since $\lambda > 1$) and the optimization procedure in Step 3. is repeated with the corresponding 
new augmented HP function. On the other hand, if the inequality above is 
false, the decrease has met the required criteria for reduction of the value of 
$J$, and the new values $D^{k+1} = D$, $\sigma^{k+1} = \sigma$ are 
obtained. Correspondingly, the updates $u_i^{k+1}$ and $q_i^{k+1} $, 
$i=1,2$, are computed. In this case, the value of $\epsilon$ is reduced 
by a factor $\zeta < 1$. 

The fundamental result required to guarantee wellposedness of the SQH 
algorithm applied to our QPAT inverse problem is to prove that, if not already 
at the optimal point, it is always possible 
to find a value of $\epsilon >0$ such that the minimization of the 
corresponding $H_\epsilon$ results in new values of $D$ and $\sigma$ 
for which the value of the functional $J$ is reduced. We present this result 
in Theorem \ref{sqh2} below and refer to \cite{Breit18,Breit19a} 
for the use of this theorem for further results on the convergence of the SQH 
algorithm in a finite number of steps.

\begin{theorem}\label{sqh2}
Let $(u_1,u_2,D,\sigma)$ and $(u_1^k,u_2^k,D^k,\sigma^k)$ be as in Algorithm \ref{eq:SQH}, $k\in\mathbb{N}_0$, and denote $\delta D:=D-D^k,~\delta \sigma:=\sigma-\sigma^k$, $\delta u_i:=u_i-u_i^k$, $ i=1,2$. If the assumptions of Lemma \ref{sqh20} are satisfied, then 
there is a $\theta >0$, independent of $\epsilon$, $k$, and 
$D^k$, $\sigma^k$, such that for the $\epsilon$ currently 
chosen by the SQH Algorithm \ref{eq:SQH}, it holds that 
$$
J(D,\sigma,u_1,u_2)-J(D^k,\sigma^k,u_1^k,u_2^k) 
< - \left( \epsilon- \theta \right) \,  
\big(  \| D - D^k \|_{L^2(\Omega)}^2 + \| \sigma - \sigma^k  \|_{L^2(\Omega)}^2 \big). 
$$ 
\end{theorem}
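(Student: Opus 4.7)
The plan is to write $\Delta J := J(D,\sigma,u_1,u_2) - J(D^k,\sigma^k,u_1^k,u_2^k)$ as an integral of differences of the HP function $H$, extract the gain $-\epsilon\tau$ from the pointwise minimization in Step 3 of Algorithm \ref{eq:SQH}, and then use the forward and adjoint PDEs to show that the remaining discrepancy is quadratic in $(\delta D,\delta\sigma,\delta u_i)$. First, observe that for any $v_i \in H_0^1(\Omega)$, integration by parts combined with the forward equation \eqref{eq:DA} satisfied by $u_i$ yields $\int_\Omega [D\nabla u_i \cdot \nabla v_i + (\sigma+\sigma_b)u_i v_i]\,dx\,dy = 0$, and similarly for the pair $(u_i^k,D^k,\sigma^k)$. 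Choosing $v_i = q_i^k$ (which belongs to $H_0^1(\Omega)$ by \eqref{eq:DA_adj}) gives the Hamiltonian representations $J(D,\sigma,u_1,u_2) = \int_\Omega H(D,\sigma,u_1,u_2,q_1^k,q_2^k)\,dx\,dy$ and $J(D^k,\sigma^k,u_1^k,u_2^k) = \int_\Omega H(D^k,\sigma^k,u_1^k,u_2^k,q_1^k,q_2^k)\,dx\,dy$. Adding and subtracting $\int_\Omega H(D,\sigma,u_1^k,u_2^k,q_1^k,q_2^k)\,dx\,dy$ splits $\Delta J = T_1 + T_2$, where $T_1$ isolates the variation in $u$ and $T_2$ the variation in $(D,\sigma)$.

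For $T_2$: the pointwise minimization in Step 3 of Algorithm \ref{eq:SQH} evaluated at the admissible candidate $(w(x,y),z(x,y)) = (D^k(x,y),\sigma^k(x,y))$ gives $H(D,\sigma,u^k,q^k) - H(D^k,\sigma^k,u^k,q^k) \leq -\epsilon[(D-D^k)^2 + (\sigma-\sigma^k)^2]$ almost everywhere, whence $T_2 \leq -\epsilon\tau$ with $\tau = \|\delta D\|_{L^2(\Omega)}^2 + \|\delta\sigma\|_{L^2(\Omega)}^2$.

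For $T_1$: the key point is that all contributions strictly linear in $\delta u_i$ cancel. Expanding the data-fit quadratic through $\psi := \Gamma(\sigma+\sigma_b) = \psi^k + \Gamma\delta\sigma$ and the PDE-coupling terms through $D = D^k + \delta D$, $\sigma = \sigma^k + \delta\sigma$, the purely linear-in-$\delta u_i$ part of $T_1$ reads $\mathcal{L} := \sum_i \int_\Omega [\alpha\psi^k(\mathcal{H}_i^k - G_i^\delta)\delta u_i + D^k\nabla\delta u_i \cdot \nabla q_i^k + (\sigma^k+\sigma_b)q_i^k\delta u_i]\,dx\,dy$. Since $u_i$ and $u_i^k$ share the Dirichlet data $g$, one has $\delta u_i \in H_0^1(\Omega)$, so testing \eqref{eq:DA_adj} against $\delta u_i$ yields exactly $\mathcal{L} = 0$. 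The remaining terms in $T_1$ are bilinear in the triple $(\delta D, \delta\sigma, \delta u_i)$, namely $\int \delta D \nabla\delta u_i \cdot \nabla q_i^k$, $\int \delta\sigma\, q_i^k \delta u_i$, $\int \alpha\Gamma\delta\sigma \cdot[\text{bounded data}]\cdot \delta u_i$, plus the purely quadratic $\int (\alpha/2)\psi^2 \delta u_i^2$. Applying Cauchy--Schwarz and Young's inequality, together with the second estimate of Lemma \ref{sqh20}, $\|\delta u_i\|_{H^1} \leq C(\|\delta D\|_{L^2} + \|\delta\sigma\|_{L^2})$, and the essential bounds on $u_i^k, \nabla u_i^k, q_i^k, \nabla q_i^k$ (uniform in $k$ by elliptic regularity for coefficients in $D_{ad}\times\Sigma_{ad}$), each term is controlled by a constant multiple of $\tau$, yielding $|T_1| \leq \theta\tau$ for a $\theta$ depending only on the problem data.

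Combining the two estimates gives $\Delta J = T_1 + T_2 \leq (\theta - \epsilon)\tau$, with strict inequality when $\tau > 0$ (i.e., when $(D,\sigma) \neq (D^k,\sigma^k)$). The main obstacle is the quadratic control of $T_1$: a direct application of Lemma \ref{sqh20} to each term in $T_1$ only produces an $O(\sqrt{\tau})$ bound, which is insufficient. The delicate cancellation $\mathcal{L} = 0$ via the adjoint equation is what upgrades this to the required $O(\tau)$ bound. A secondary issue is justifying that the uniform essential bounds on $u_i^k, q_i^k$ and their gradients are independent of $k$, which rests on standard but nontrivial elliptic regularity applied on the compact admissible sets.
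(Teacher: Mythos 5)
Your argument follows the same route as the paper's appendix proof: represent $J$ through the HP function with the frozen adjoints $q_i^k$ (the coupling terms vanish by the weak form of \eqref{eq:DA}), extract the $-\epsilon\tau$ gain from the Step-3 minimization tested at the admissible candidate $(D^k,\sigma^k)$, cancel the contributions linear in $\delta u_i$ against the adjoint equation \eqref{eq:DA_adj}, and absorb the remaining quadratic terms via the second estimate of Lemma \ref{sqh20}, which yields a constant $\theta$ of the form $\bar{C}_1\,\bar{\sigma}$ exactly as in the paper. If anything, your treatment is slightly more careful than the paper's: you explicitly isolate and bound the bilinear cross-terms such as $\int_\Omega \delta D\,\nabla \delta u_i\cdot\nabla q_i^k$ and $\int_\Omega \delta\sigma\, q_i^k\,\delta u_i$, which arise because the linear-in-$\delta u_i$ part of $H(\cdot,D,\sigma,\cdot)$ carries the new coefficients while the adjoint equation carries $D^k,\sigma^k$, and you flag the need for uniform essential bounds on $q_i^k$ and $\nabla q_i^k$ --- points the paper's proof passes over silently.
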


\begin{proof} See the Appendix. \end{proof}

\bigskip


For implementing Algorithm \ref{eq:SQH}, we need to solve the photon propagation equation \eqref{eq:DA} for different boundary conditions $g_i,~i=1,2$, and the corresponding adjoint equations \eqref{eq:DA_adj}. We use a cell-nodal finite-difference scheme, proposed in \cite{ascher_haber}, to discretize \eqref{eq:DA} and \eqref{eq:DA_adj}. To illustrate this scheme, we consider the generic form of the equations \eqref{eq:DA} and \eqref{eq:DA_adj} defined in the domain $\Omega=(a,b)^2 \subset\mathbb{R}^2$ as follows  
\begin{equation}\label{eq:elliptic}
\begin{aligned}
-\nabla\cdot({D(x,y)}\nabla{u(x,y)})+(\sigma(x,y)+\sigma_b) \, u(x,y)&=f & \qquad \mbox{ in }\Omega, \\
u(x,y)&=f_D(x,y)  & \qquad \mbox{ on }\partial\Omega ,\\
\end{aligned}
\end{equation}
where $f$ and $f_D$ are assumed continuous in $\Omega$ and on $\partial\Omega$, respectively. Consider a sequence of uniform grids 
$\lbrace\Omega_h\rbrace_{h>0}$ given by 
\begin{equation}\label{eq:domain}
\Omega_h = \lbrace{(x_1^i,x_2^j)\in\mathbb{R}^2:(x_1^i,x_2^j) = (a+ih,a+jh),~(i,j)\in
\lbrace{0,\hdots,N}\rbrace^2}\rbrace\cap\Omega,
\end{equation}
where $N$ represents the number of cells in each direction and $h = \dfrac{(b-a)}{N}$ is the mesh size. Then the corresponding cell-nodal discretization places the unknowns $u,D,\sigma$ at the nodal points $(x^i,y^j)$ of the grid that results in the following approximation of \eqref{eq:elliptic} at  $(x^i,y^j)$
\begin{equation}\label{eq:FD_scheme}
\begin{aligned}
&\dfrac{1}{h^2} \Bigg\lbrace({D_{i+1/2,j}}+{D_{i-1/2,j}}+{D_{i,j+1/2}}+{D_{i,j-1/2}})u_{i,j}\\
& -{D_{i+1/2,j}}u_{i+1,j}- {D_{i-1/2,j}}u_{i-1,j}- {D_{i,j+1/2}}u_{i,j+1}- {D_{i,j-1/2}}u_{i,j-1}  \Bigg\rbrace\\
&+(\sigma_{i,j}+\sigma_b)u_{i,j}=f_{i,j},\qquad 1 \leq i,j \leq N-1 .\\
\end{aligned}
\end{equation}
We denote $D_{i \pm 1,j}=D(x^i \pm h , y^j)$, 
$D_{i ,j \pm 1}=D(x^j , y^j \pm h )$, and 
$f_{i,j}=f(x^i,y^j)$. The required intermediate values 
of $D$ are computed as follows  
$$
{D_{i \pm 1/2,j}} = \dfrac{1}{2}\Bigg({D_{i\pm 1,j}}+{D_{i,j}} \Bigg) \quad \mbox{ and } \quad 
{D_{i ,j \pm 1/2}} = \dfrac{1}{2}\Bigg({D_{i,j \pm 1}}+{D_{i,j}} \Bigg) .
$$
The Dirichlet boundary data $f_D$ is included in the standard way in the right-hand side of the algebraic equation. We use one-sided finite difference discretizations to evaluate the right-hand sides of the adjoint equations. With this discretization scheme in \eqref{eq:FD_scheme}, we solve a matrix-vector equation to obtain the unknowns $u_{i,j}$.

\section{Numerical results}\label{sec:results}
We present results of numerical experiments to validate the ability of our 
QPAT optimization framework in the reconstruction of the diffusion coefficient $D$ 
and the absorption coefficient $\sigma$ profiles with high accuracy. These also demonstrate the efficiency and robustness of our 
SQH-QPAT methodology. We consider a widely varying set of phantoms for $D,\sigma$. In Test Case 1, we consider a disk phantom with different intensities for $D,\sigma$ and investigate reconstruction with the different regularization terms considered in our cost functional $J$. Next, we discuss two experiments with the heart lung phantom: one in which the relation \eqref{eq:D_rel} is satisfied between ${D}$ and $\sigma$, and the other in which \eqref{eq:D_rel} is not satisfied, making it a very generic set of phantoms for $D,\sigma$. We demonstrate that in both cases, by incorporating the aprior assumption \eqref{eq:D_rel} in the functional $J$, we can obtain high quality reconstructions. 
We conclude our set of experiments with the more challenging set of two Shepp-Logan phantoms:  the standard one, and another augmented with tumor structures. In the test cases with the heart and lung phantom and the Shepp-Logan phantom,  we also implement our method on data containing additive Gaussian noise in order to demonstrate the robustness of our framework.

{For the experiments below, we first remark that the units of the spatial variable $x$ is $cm$ and of the absorption coefficient is $cm^{-1}$. Starting with an experimental square domain of side length 5 cm,  we first scale and translate the diffusion equation \eqref{eq:DA}, such that the domain $\Omega = (-1,1)\times(-1,1)$, is discretized into 100 equally spaced points in both the coordinate directions. The corresponding scaling is 1 experimental unit corresponds to 2.5 cm. Thus, based on Table 1, an experimental range of 0-1 for $\sigma_a$ corresponds to 0-0.4 $cm^{-1}$. }The two boundary radiation functions are $g_1(x,y) = e^x,~ g_2(x,y) = e^y.$ {The weights of the functional $J$ given in \eqref{eq:cost_functional} are chosen as $\alpha=1,\xi_1=0.01,\xi_2=20,\gamma=0.01$ and $\Gamma = 1.0$. }The large value of $\xi_2$ is chosen to emphasize the need to satisfy the relation between the diffusion and the absorption coefficient, given by \eqref{eq:D_rel}, in the numerical experiments. {The value of $c$ in the Kubelka-Munk relation \eqref{eq:D_rel} is chosen to be 100/3, though as we have mentioned earlier, this choice is not essential. }{The initial value of $\epsilon = 10$} and the value of the stopping criteria for convergence is chosen as $\kappa = 10^{-6}$. To generate the data $G_i^\delta,~ i=1,2$, we first solve for $u_i$ in (\ref{eq:DA}) with given test values of $D,\sigma$ and boundary illumination data $g_i$ on a finer mesh with $N=400$ using the cell-nodal finite difference model described in Section \ref{sec:numerical_scheme}. We next compute $G_i^\delta$ on the finer mesh using the values of $\sigma,\sigma_b,u_i$ from \eqref{eq:optical_energy}. We finally restrict the resulting $G_i^\delta$ onto the coarser mesh with $N=100$ and take this as our given data. 

{To quantify the comparisons of the reconstructions obtained in terms of their contrast (function values) and resolution (discontinuities or edges), with and without noise in the data, we use the following quantitative figures of merit \cite{Kinahan1994}
\begin{equation}
\begin{aligned}
 &\mbox{Relative mean square error (RMSE) percentage}= \frac{\|f_{\mbox{rec}}-f_{\mbox{ex}}\|_2}{\|f_{\mbox{ex}}\|_2}*100 \%,\\
&\mbox{Peak signal-to-noise ratio (PSNR)} = 10\log_{10}\left(\frac{\max{f_{\mbox{ex}}}}{\|f_{\mbox{rec}}-f_{\mbox{ex}}\|^2_2}\right),\\ 
\end{aligned}
\end{equation}
where $f_{ex}$ is the exact phantom and $f_{rec}$ is the reconstructed phantom.}

\vspace{0.1cm}

\textbf{Test Case 1:}  we consider a phantom represented by a disk centered at $(0.25,0.25)$ and having radius 0.25. The value of $\sigma$ inside the disk is 1 and outside is 0. The background value $\sigma_b$ is chosen to be 0.16. The corresponding value of D is chosen as 0.003 inside the disk and 0.02 outside. The plots of the actual phantoms for $D$ and {the total absorption coefficient $\sigma_a$} are shown in Figure \ref{disk_exact}.

\tred{\begin{figure}[H]
\centering
\subfloat[Exact $D$]{\includegraphics[width=0.4\textwidth]{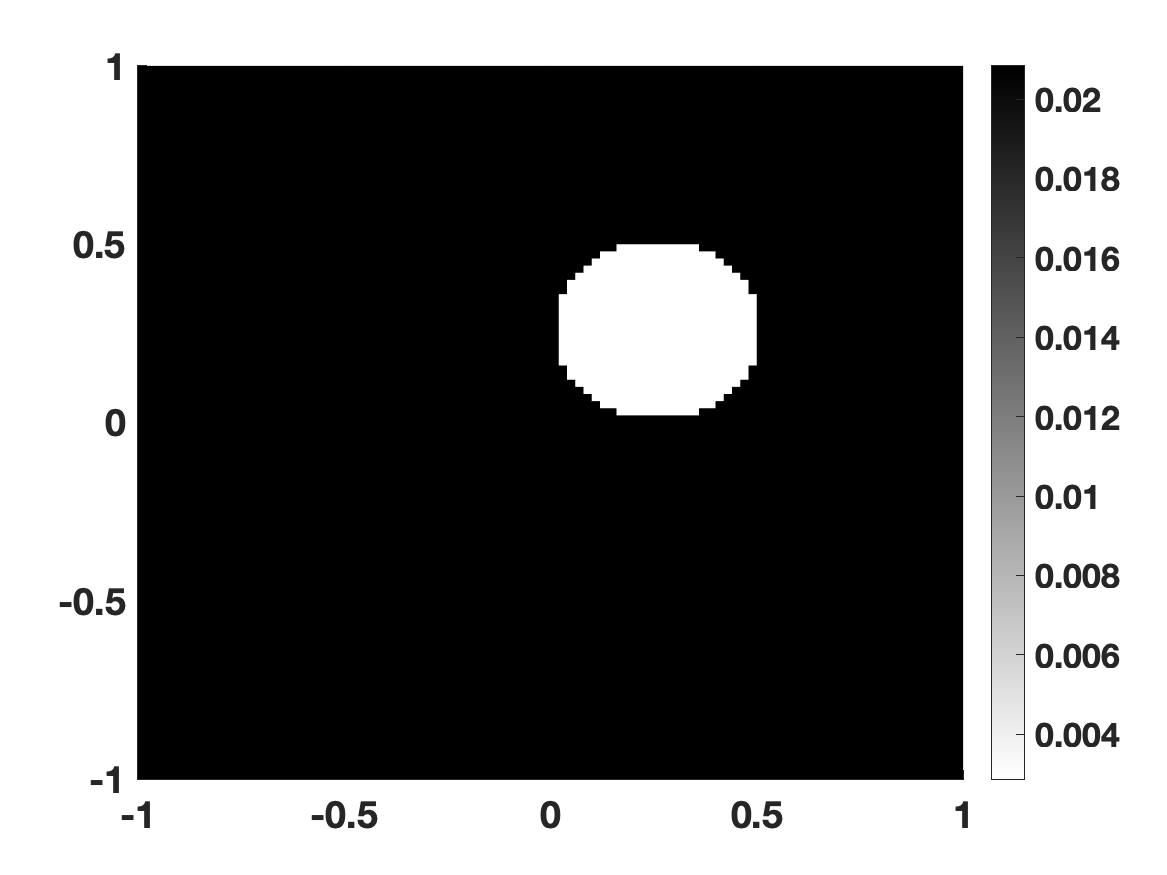}\label{D_disk_actual}}\hspace{10mm}
\subfloat[Exact $\sigma_a$]{\includegraphics[width=0.4\textwidth]{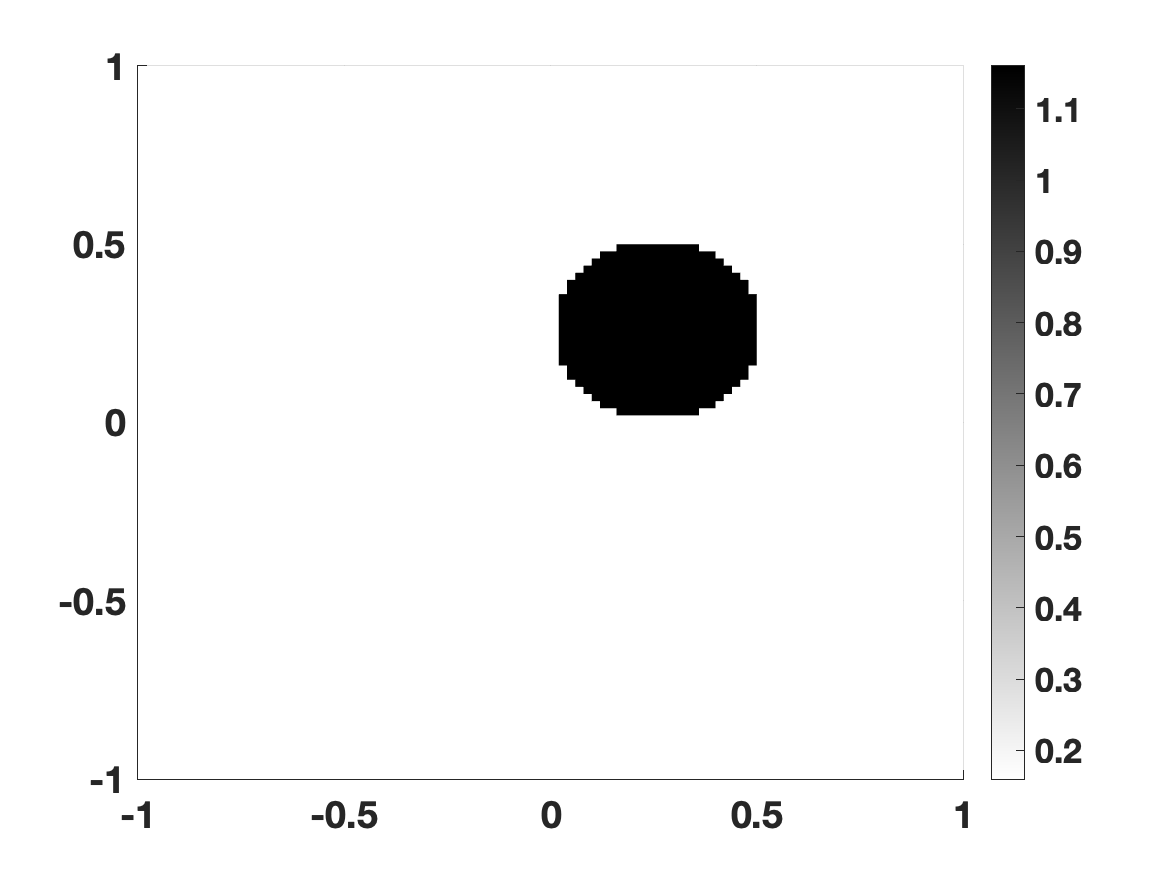}\label{sigma_disk_actual}}\\
\caption{Test Case 1: Exact $D$ and $\sigma$ represented by disk phantoms. }
    \label{disk_exact}
  \end{figure}
  }
  
In Figure \ref{disk1}, we present the reconstruction of $D$ and $\sigma_a$ using various choices of the regularization terms in $J$. {The first column 
of images depicts reconstructions of $D$ and $\sigma_a$ by setting the weight $\xi_2=0$ of the regularization term 
$ \left\|D-\bar{D}\right\|_2^2$, which is the term that measures the deviation of $D$ from the Kubelka-Munk approximation $\bar{D}$, given in \eqref{eq:D_rel}}. We see that the reconstruction of $D$ has a very poor resolution and contrast and in turn, this causes loss of contrast in the reconstruction of $\sigma_a$. {In the second column, we use our regularization term $ \left\|D-\bar{D}\right\|_2^2$ but setting the weight $\gamma=0$ of the sparsity that promotes the $L^1$ regularization of $\sigma$.} We now see a reconstructed $\sigma_a$ with a poor contrast with respect to the background $\sigma_b$. The reconstructed $D$ is now far more superior than in the previous cases, but there is still a significant loss of contrast.

\begin{figure}[H]
\centering
\subfloat[Reconstruction of $D$ with $\alpha=1,\xi_1=0.01,\xi_2=0,\gamma=0.01$]{\includegraphics[width=0.35\textwidth]{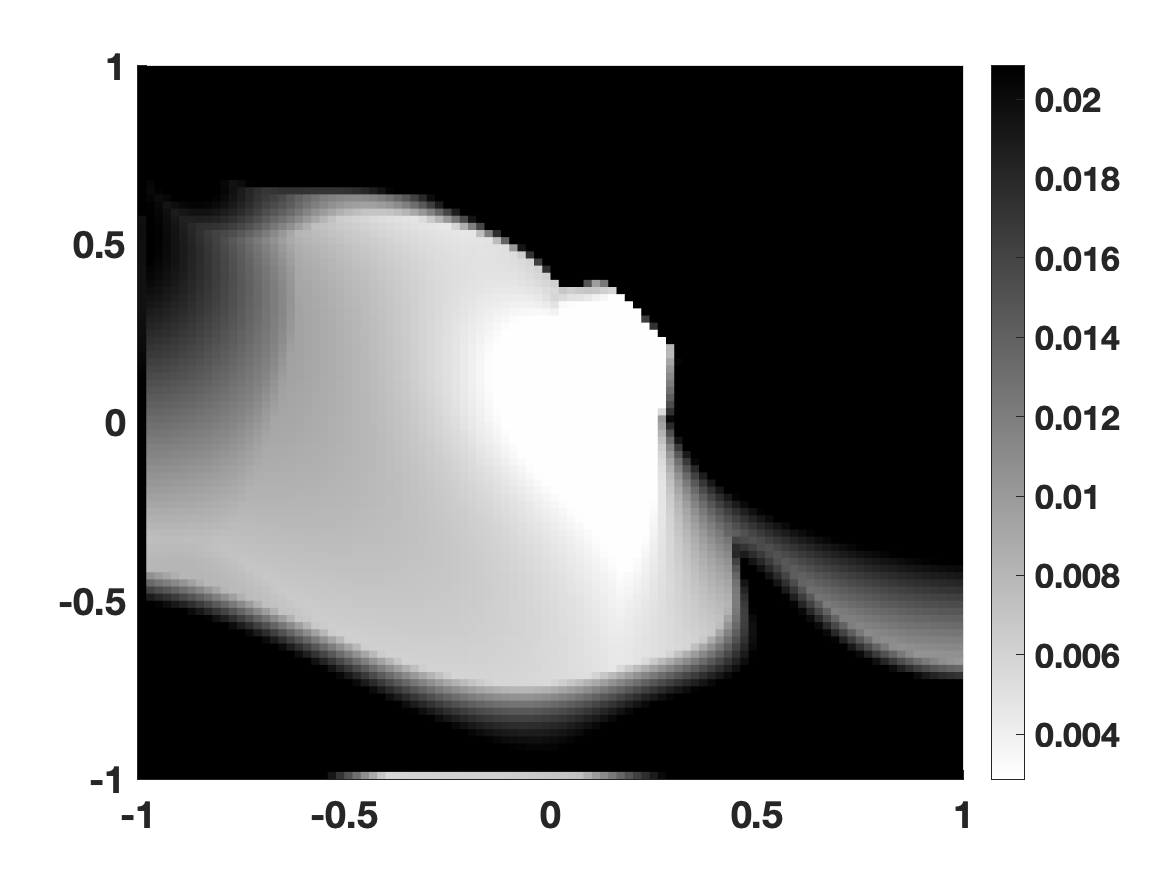}\label{D_disk_wo_rel_recon}}
\subfloat[Reconstruction of $D$ with $\alpha=1,\xi_1=0.01,\xi_2=20,\gamma=0$]{\includegraphics[width=0.35\textwidth]{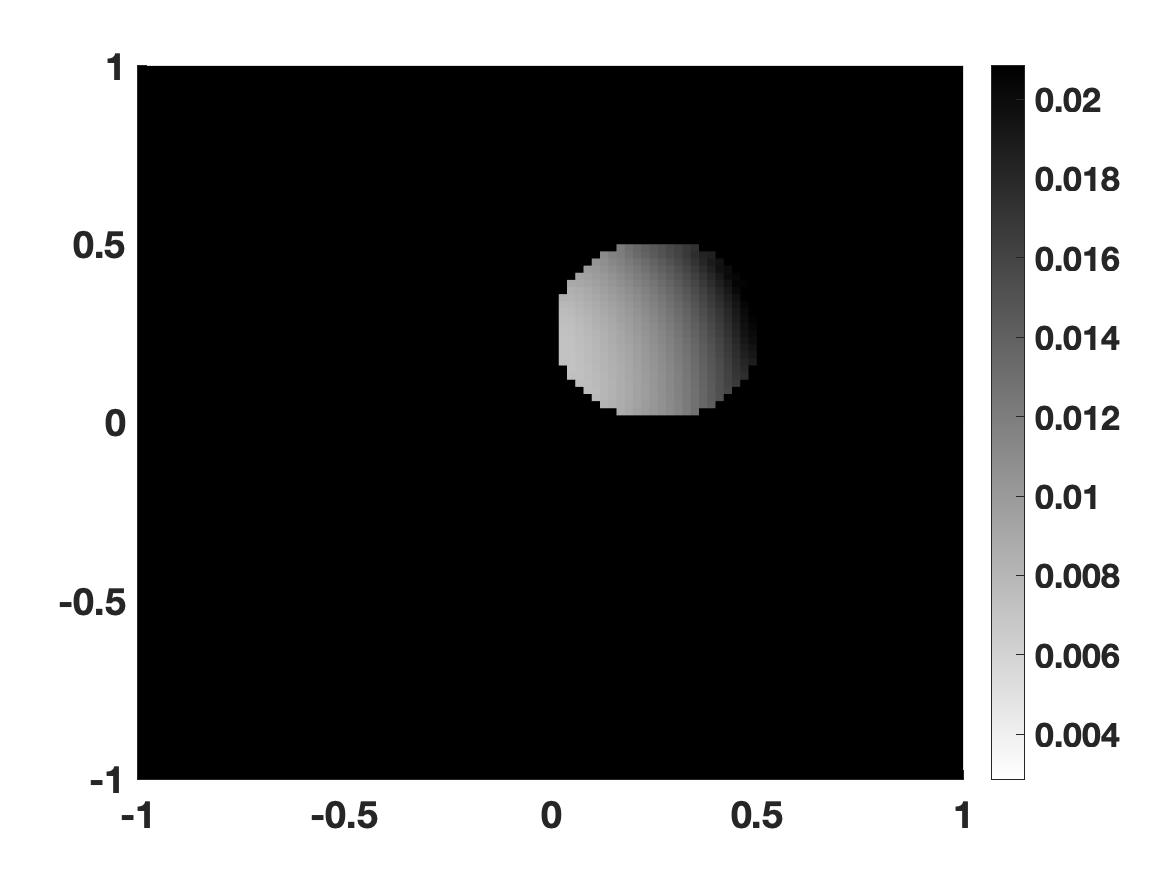}\label{D_disk_wo_L1_recon}}\\
\subfloat[Reconstruction of $\sigma_a$ with $\alpha=1,\xi_1=0.01,\xi_2=0,\gamma=0.01$]{\includegraphics[width=0.35\textwidth]{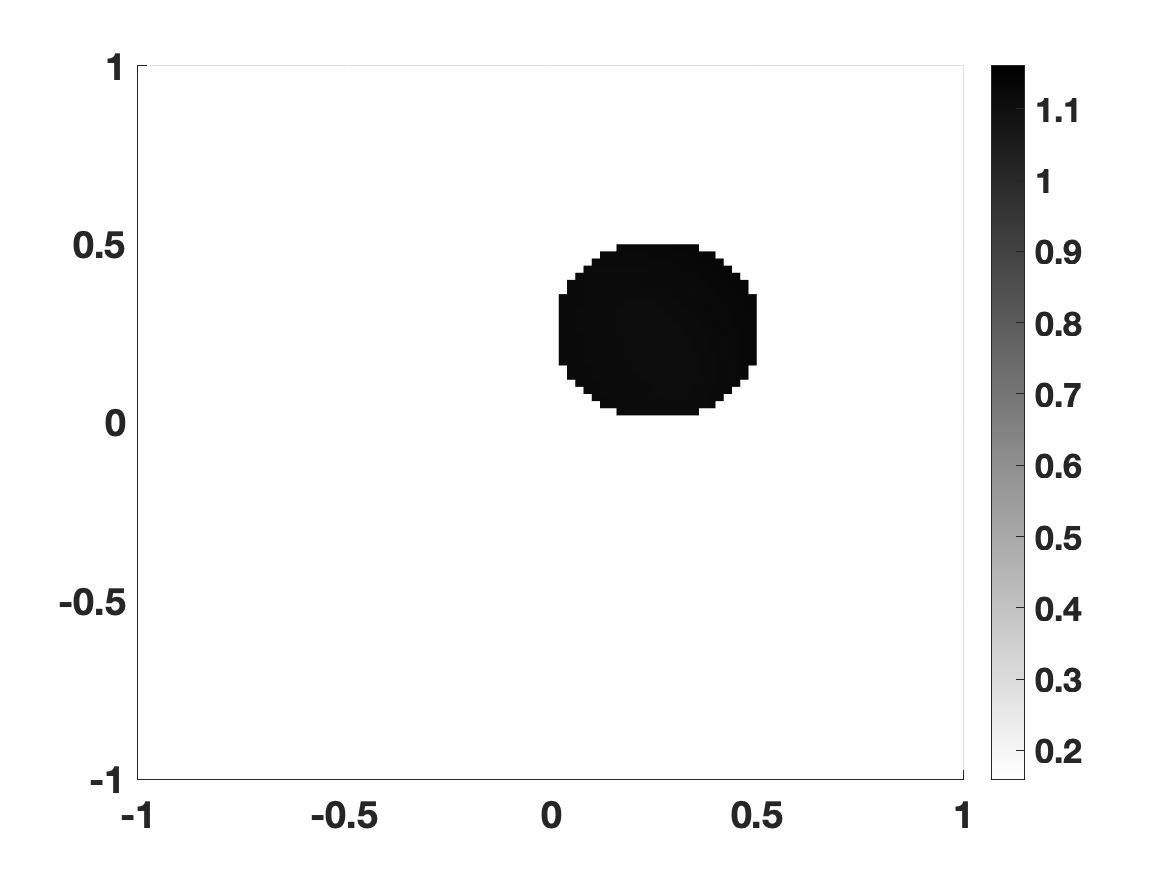}\label{sigma_disk_wo_rel_recon}}
\subfloat[Reconstruction of $\sigma_a$ with $\alpha=1,\xi_1=0.01,\xi_2=20,\gamma=0$]{\includegraphics[width=0.35\textwidth]{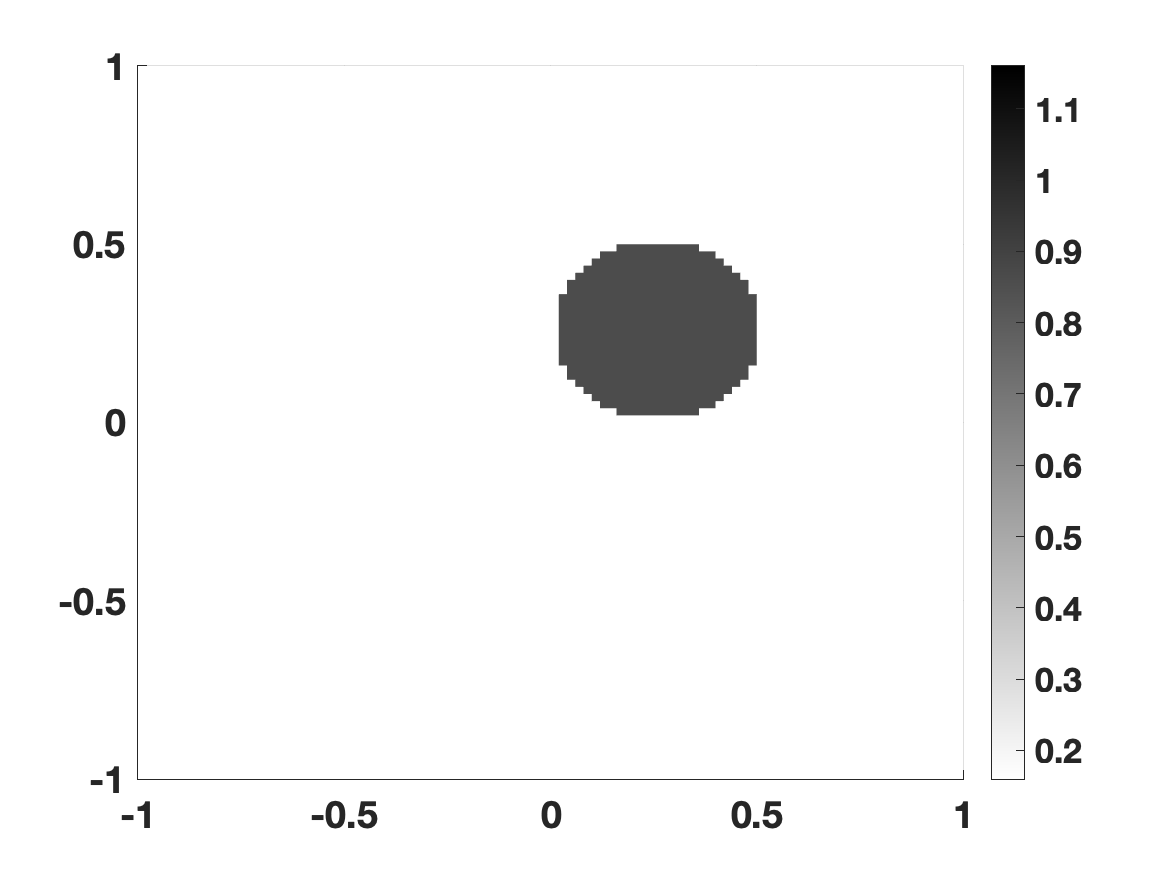}\label{sigma_disk_wo_L1_recon}}\\

\caption{Test Case 1: The reconstructed disk phantoms
    with different choices of the values of the regularization weights. }
    \label{disk1}
  \end{figure}
  
{Next, we show that in order to enhance contrast and resolution in the reconstruction, we need to have the $L^2$ regularization term $ \left\|D-\bar{D}\right\|_2^2$ for $D$ and the sparsity promoting $L^1$ regularization term for $\sigma$. In this setting, Figure \ref{disk2} shows the reconstructions of $D$ and $\sigma_a$ with our framework using the SQH algorithm. We see that the reconstructed functions are of high resolution and high contrast when compared to the actual phantoms in Figure \ref{disk_exact}.  We also compute the RMSE percentages and PSNR values for the reconstructions in Table \ref{table:disk}. 
We observe that without the Kubelka-Munk regularization term for $D$ and without the $L^1$ regularization term for $\sigma$, we have very high values of RMSE \% and low values of PSNR values. With those regularization terms, using the SQH algorithm, we obtain RMSE \% of around 1 and high PSNR values, reaffirming the superiority of the reconstructions.}
  
\begin{figure}[H]
\centering
\subfloat[Reconstruction of $D$ with $\alpha=1,\xi_1=0.01,\xi_2=20,\gamma=0.01$]{\includegraphics[width=0.35\textwidth]{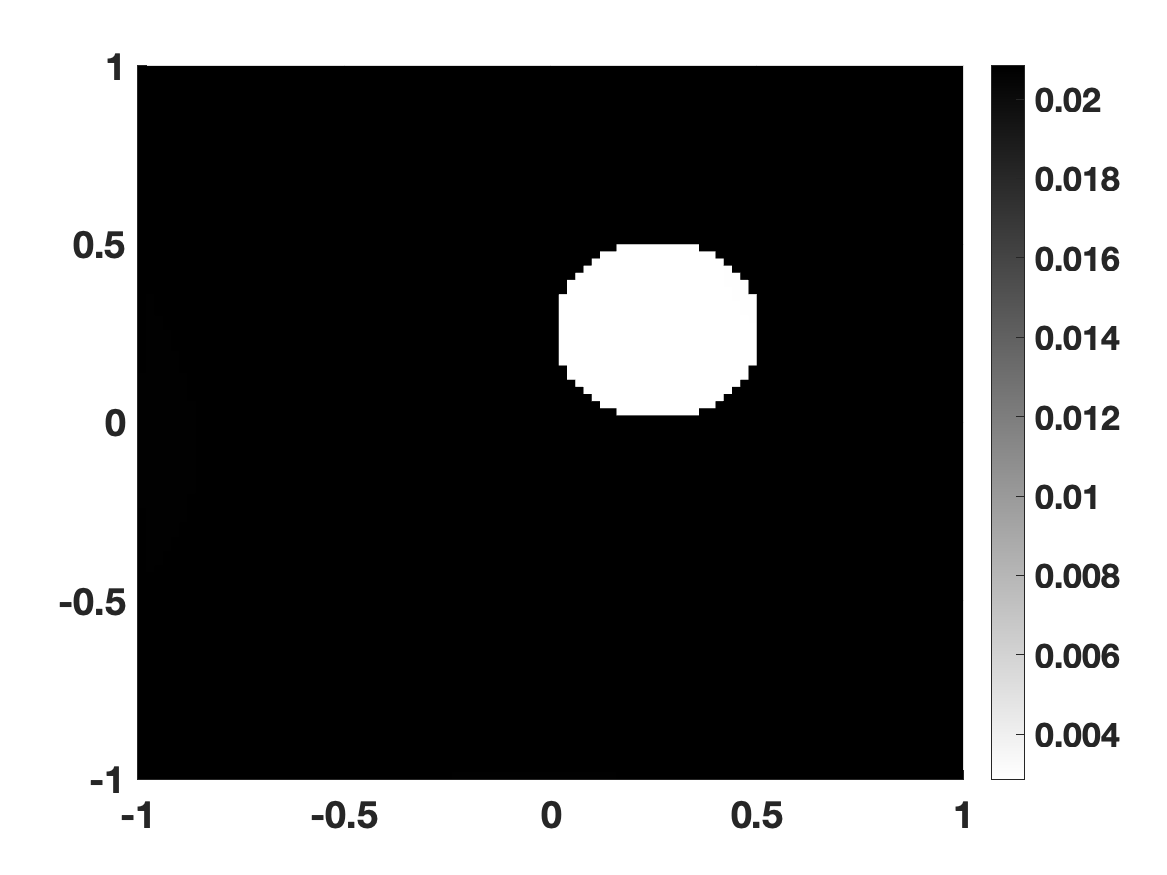}\label{D_disk_recon}}\hspace{10mm}
\subfloat[Reconstruction of $\sigma_a$ with $\alpha=1,\xi_1=0.01,\xi_2=20,\gamma=0.01$]{\includegraphics[width=0.35\textwidth]{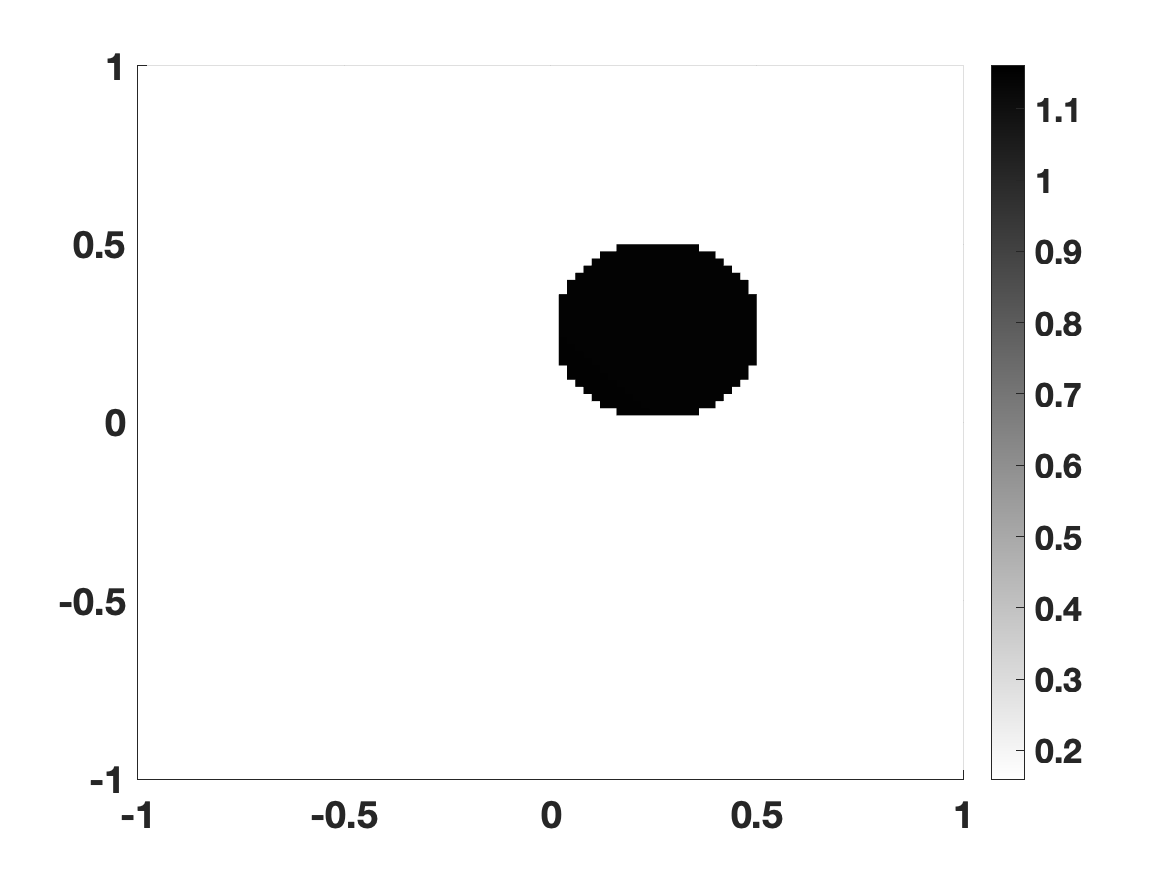}\label{sigma_disk_recon}}\\
    \caption{Test Case 1: The reconstructed disk phantoms using the weights $\alpha=1,\xi_1=0.01,\xi_2=20,\gamma=0.01$. }
    \label{disk2}
  \end{figure}
  
\begin{table}[H]
\centering
\begin{tabular}{|c|c|c|c|c|}
\hline
Method & RMSE \% ($D$) & RMSE \% ($\sigma_a$) & PSNR ($D$) & PSNR ($\sigma_a$) \\ [0.5ex]
\hline
Without Kubelka-Munk ($\xi_2=0$) & 225.7 &9.24 & 25.09 & 32.37\\
Without $L^1$ ($\gamma=0$)& 218 &30.47 & 27.06 & 23.51\\
\pbox{60mm}{With Kubelka-Munk\\ $(\xi_2 = 20) $ and $L^1$ ($\gamma=0.01)$} & 0.08 &1.41 & 94.33 & 50.29\\
[1ex]
\hline
\end{tabular}
\caption{Test Case 1: RMSE \% and PSNR values for reconstructions of the disk phantom with different regularization weights}
\label{table:disk}
\end{table}
 
\textbf{Test Case 2:} we demonstrate the robustness of our framework for reconstructing multiple objects. In this test case, we consider a heart lung phantom for $\sigma$. The underlying value of the phantom is 0 that is perturbed into two ellipses that represent the lungs with value 1, and a disk representing the heart with value 0.5. The value of $\sigma_b$ is chosen to be 0.03. The value of $D$ is chosen as 0.003 inside the ellipses, 0.006 inside the disk and 0.1 elsewhere. The plots of the exact and the reconstructed phantoms are shown in Figure \ref{heart_lung_1}.
 
\begin{figure}[H]
\centering
\subfloat[Exact $D$]{\includegraphics[width=0.35\textwidth]{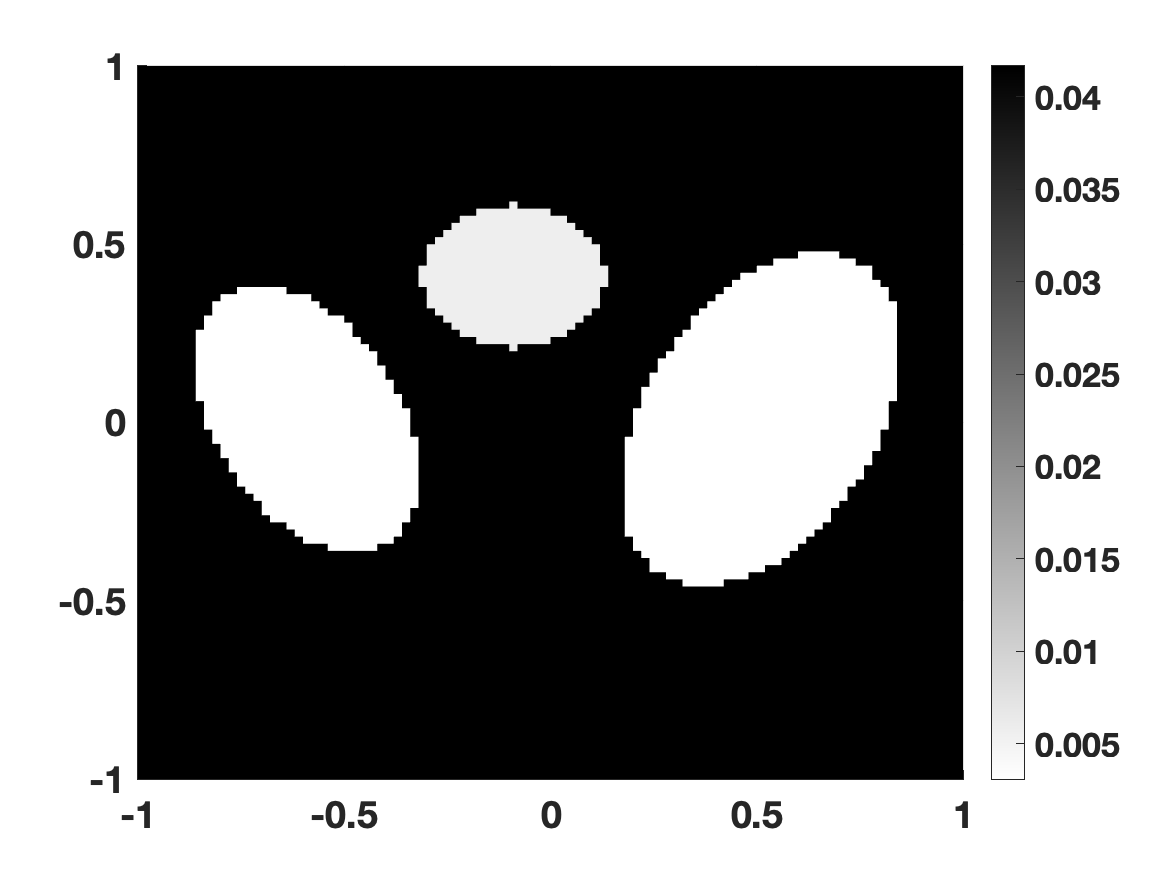}\label{D_heart_lung_actual}}
\subfloat[Exact $\sigma_a$]{\includegraphics[width=0.35\textwidth]{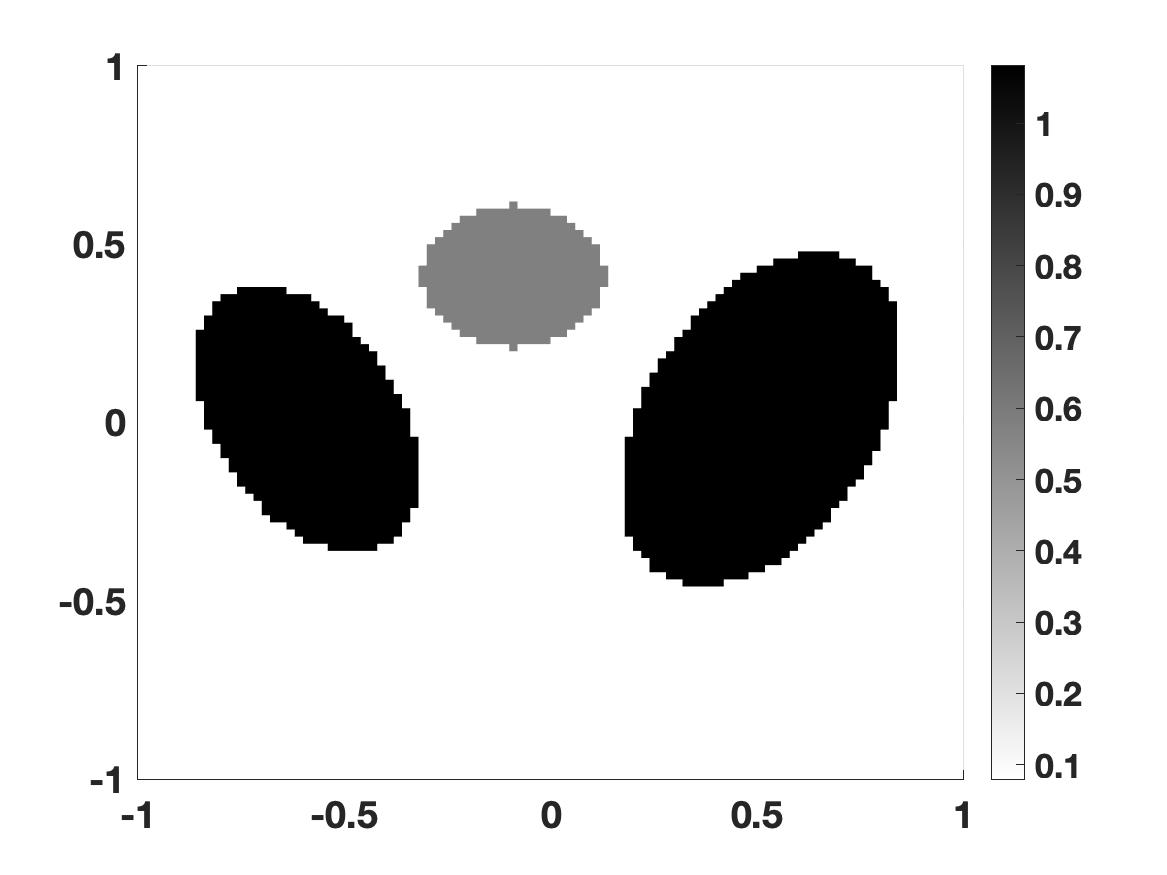}\label{sigma_heart_lung_actual}}\\

\subfloat[Reconstructed $D$]{\includegraphics[width=0.35\textwidth]{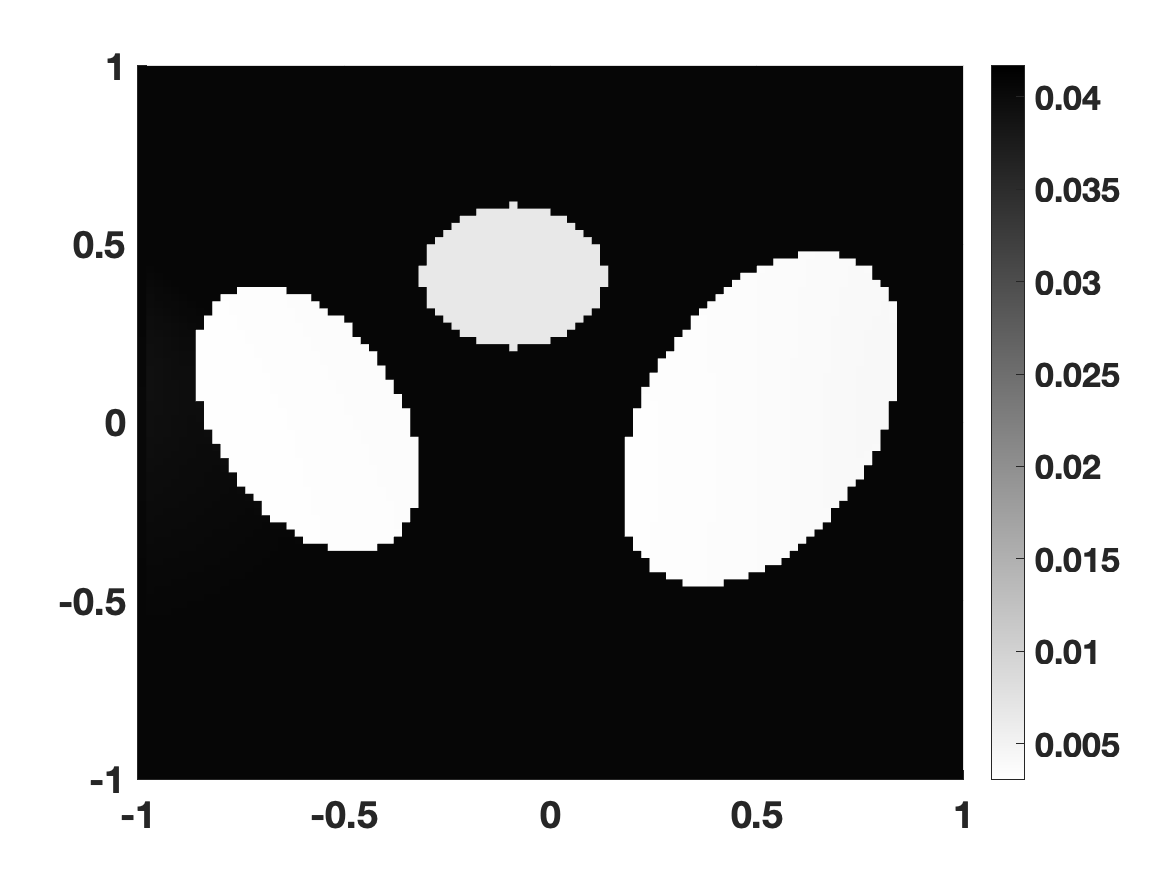}\label{D_heart_lung_recon}}
\subfloat[Reconstructed $D$ with 5\% noise]{\includegraphics[width=0.35\textwidth]{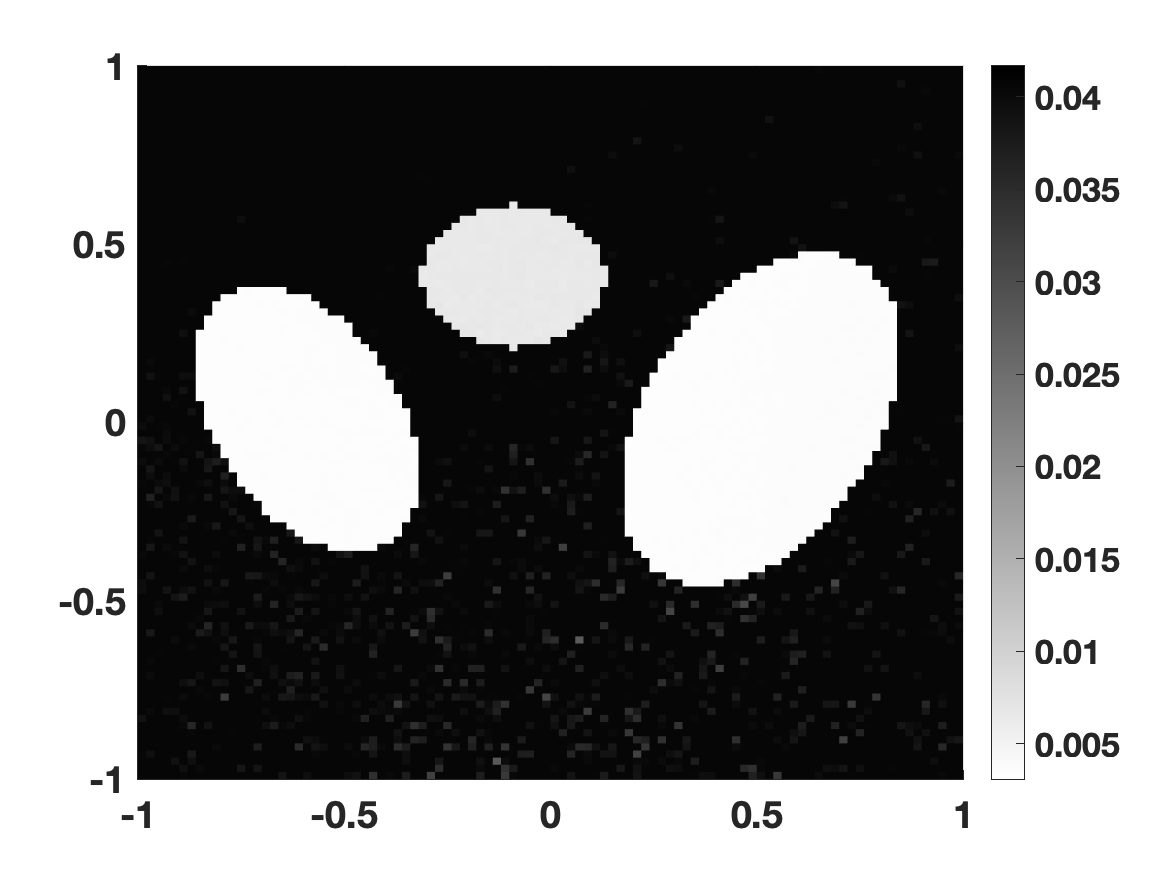}\label{D_heart_lung_recon_5}}
\subfloat[Reconstructed $D$ with 10\% noise]{\includegraphics[width=0.35\textwidth]{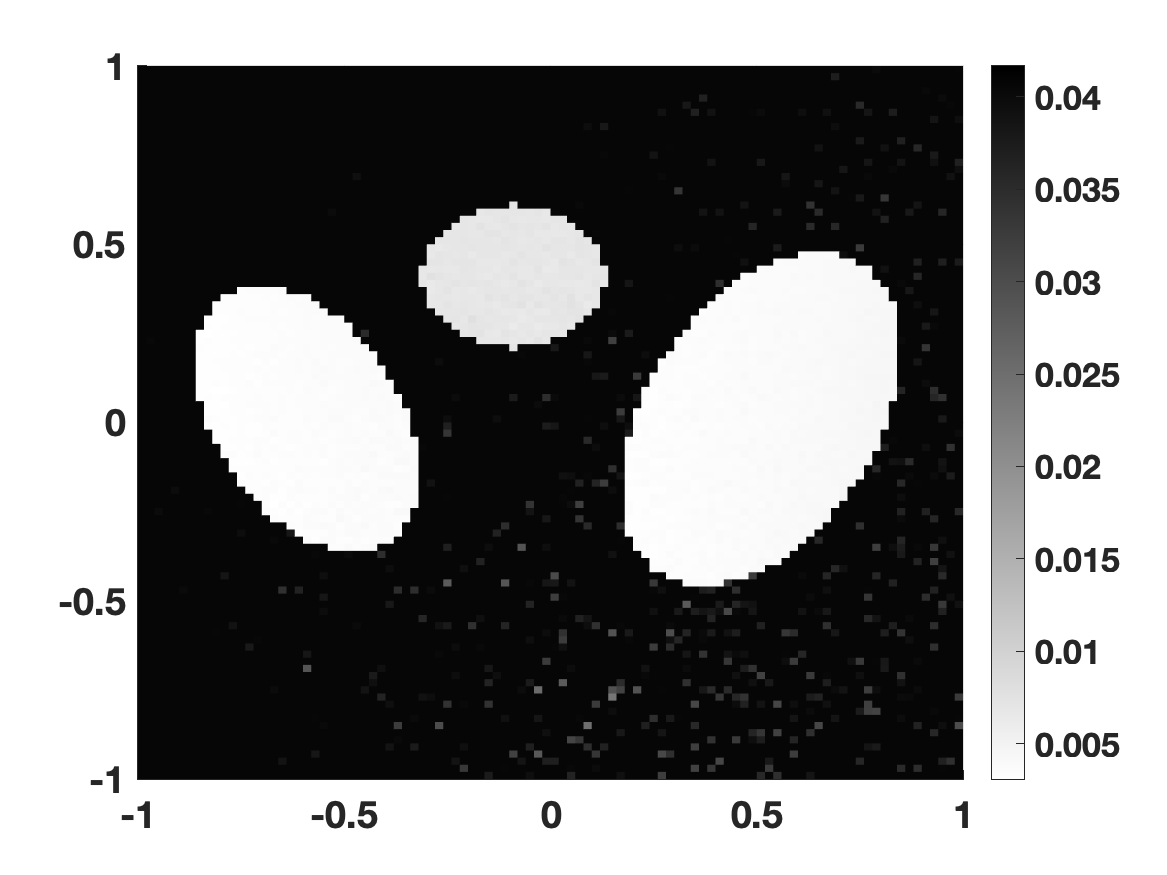}\label{D_heart_lung_recon_10}}\\

\subfloat[Reconstructed $\sigma_a$]{\includegraphics[width=0.35\textwidth]{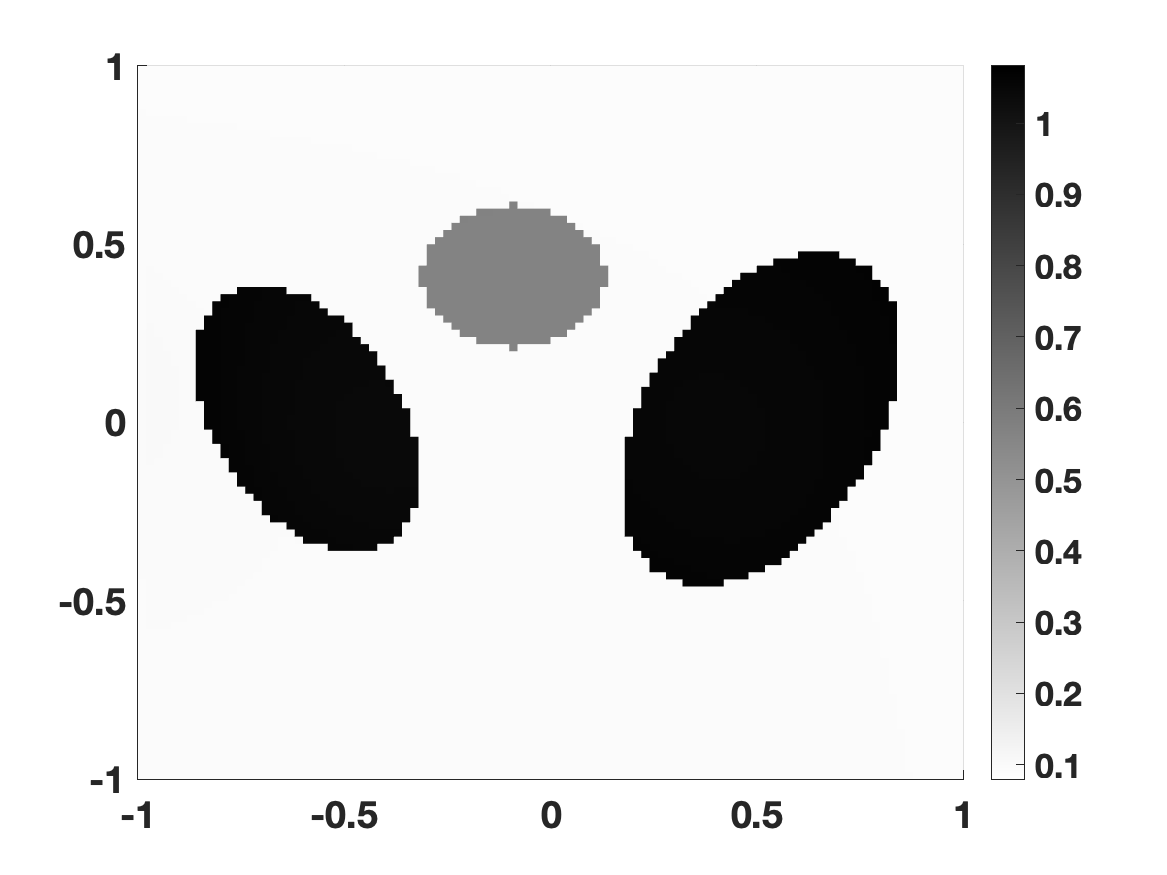}\label{sigma_heart_lung_recon}}
\subfloat[Reconstructed $\sigma_a$ with 5\% noise]{\includegraphics[width=0.35\textwidth]{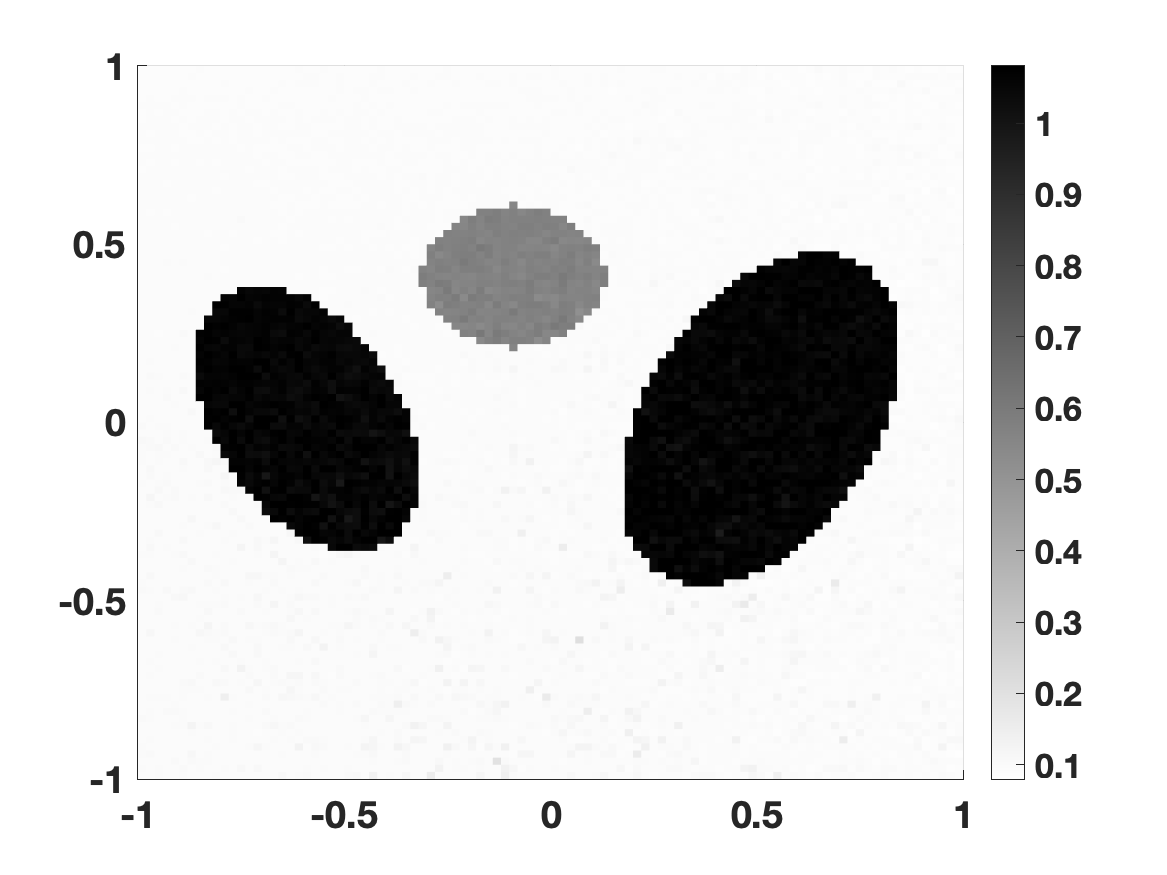}\label{sigma_heart_5}}
\subfloat[Reconstructed $\sigma_a$ with 10\% noise]{\includegraphics[width=0.35\textwidth]{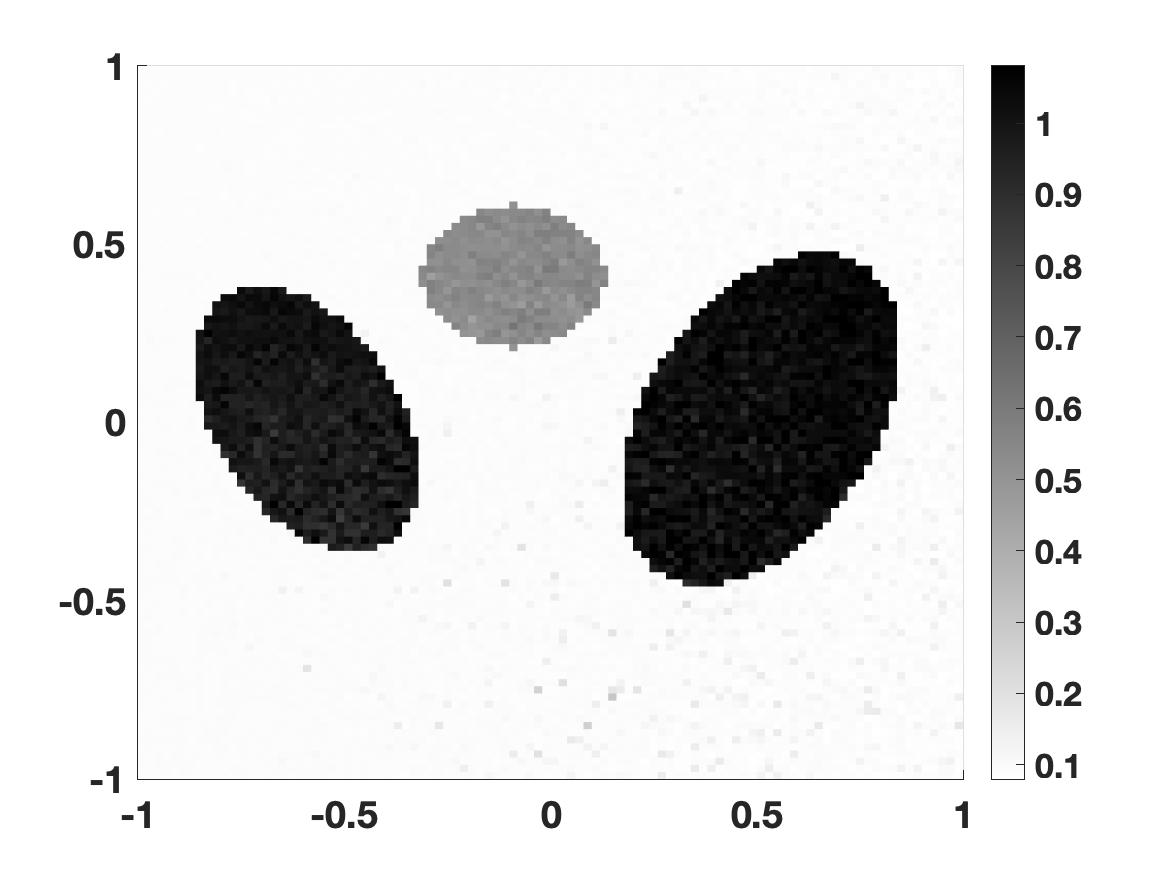}\label{sigma_heart_10}}\\

\caption{Test Case 2: The actual and the reconstructed phantoms }
    \label{heart_lung_1}
  \end{figure}
{We see from the reconstructed images of $D$ and $\sigma_a$ in Figures \ref{D_heart_lung_recon} and \ref{sigma_heart_lung_recon} that they are of high contrast and resolution. Now, in order to test the robustness of our method with noisy data, we introduce 5\% and 10\% multiplicative Gaussian noise to the interior data $\mathcal{H}$. 
We also modify the $L^2$ and $L^1$ regularization weights for $\sigma$, $\xi_1 = 0.1$ and $\gamma=0.1$. This ensures removal of artifacts from the reconstruction of $\sigma$ due to the noisy data, which in turn helps to remove some artifacts in the reconstruction of $D$. Figures \ref{D_heart_lung_recon_5}, \ref{D_heart_lung_recon_10}, \ref{sigma_heart_5} and \ref{sigma_heart_10} show the corresponding reconstructions. We see that the reconstructions of $D$ and $\sigma_a$ involve a few artifacts near the bottom, more pronounced in the case of 10\% noise. But overall, both reconstructions still preserve obtain high contrast and resolution near and at the regions of the heart and lungs. This is also validated analytically from the low RMSE \% and high PSNR values of the reconstructions presented in Table \ref{table:heart1}.}

\begin{table}[H]
\centering
\begin{tabular}{|c|c|c|c|c|}
\hline
Noise \% & RMSE \% ($D$) & RMSE \% ($\sigma_a$) & PSNR ($D$) & PSNR ($\sigma_a$) \\ [0.5ex]
\hline
0 & 2.32 &3.42 &59.96 &34.55\\
5&2.94 &3.46 &57.08 &33.89\\
10 &2.86 &7.56 &56.24 &28.05\\
[1ex]
\hline
\end{tabular}
\caption{Test Case 2: RMSE \% and PSNR values for reconstructions of the heart and lung phantom with different noise levels in the data}
\label{table:heart1}
\end{table}

\vspace{0.1cm}

\textbf{Test Case 3:} we investigate the robustness of our SQH-QPAT algorithm in reconstructing $D$ in the case where the Kubelka-Munk relation \eqref{eq:D_rel} is by far not satisfied; i.e. 
we have factors different from 100. For this test case, in regard to 
$\sigma$, we again consider the setting for heart and lung phantom as described in test Case 2. However, for the value of $D$, we choose $D=0.0006$ inside the right ellipse, $D=0.003$ inside the left ellipse, $D=0.009$ inside the disk and $D=0.1$ for the background. The value of $\sigma_b$ is again chosen to be 0.03.

\begin{figure}[H]
\centering
\subfloat[Exact $D$]{\includegraphics[width=0.35\textwidth]{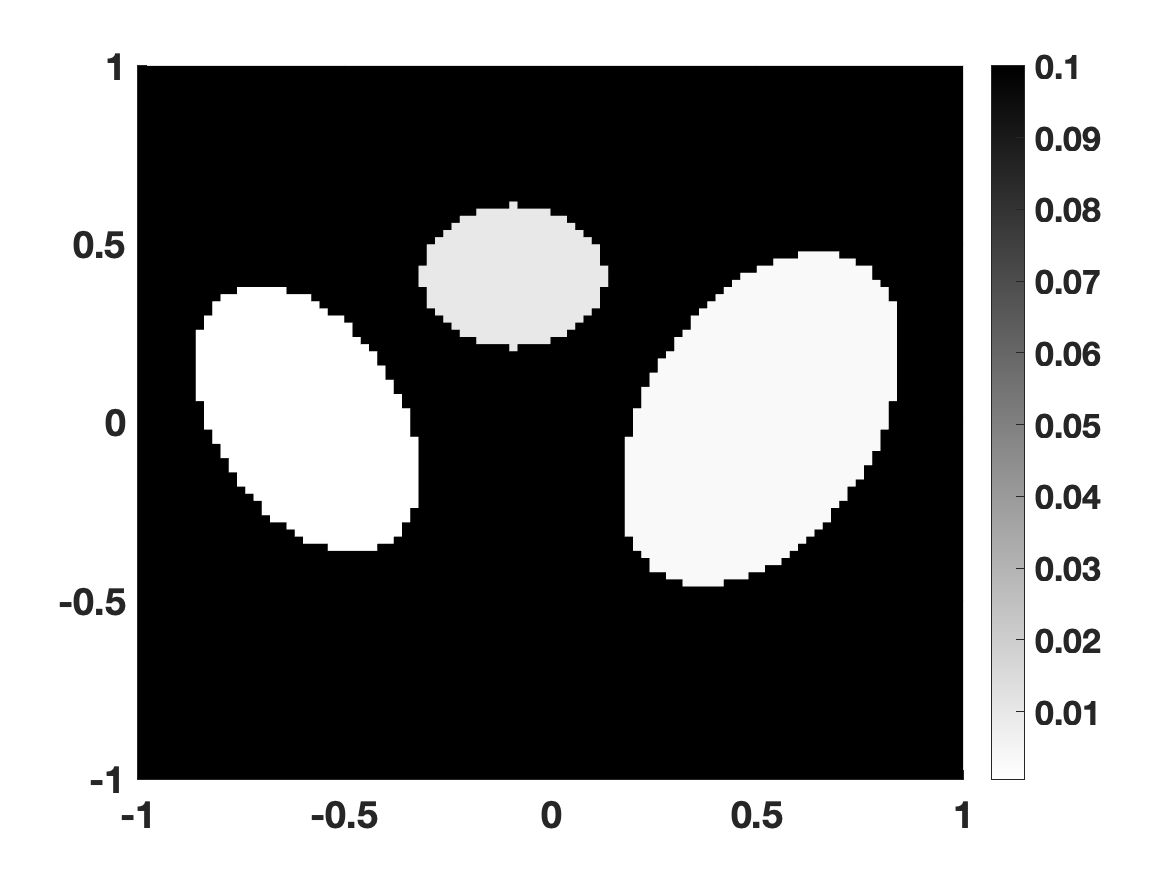}\label{D_heart_lung_actual2}}
\subfloat[Exact $\sigma_a$]{\includegraphics[width=0.35\textwidth]{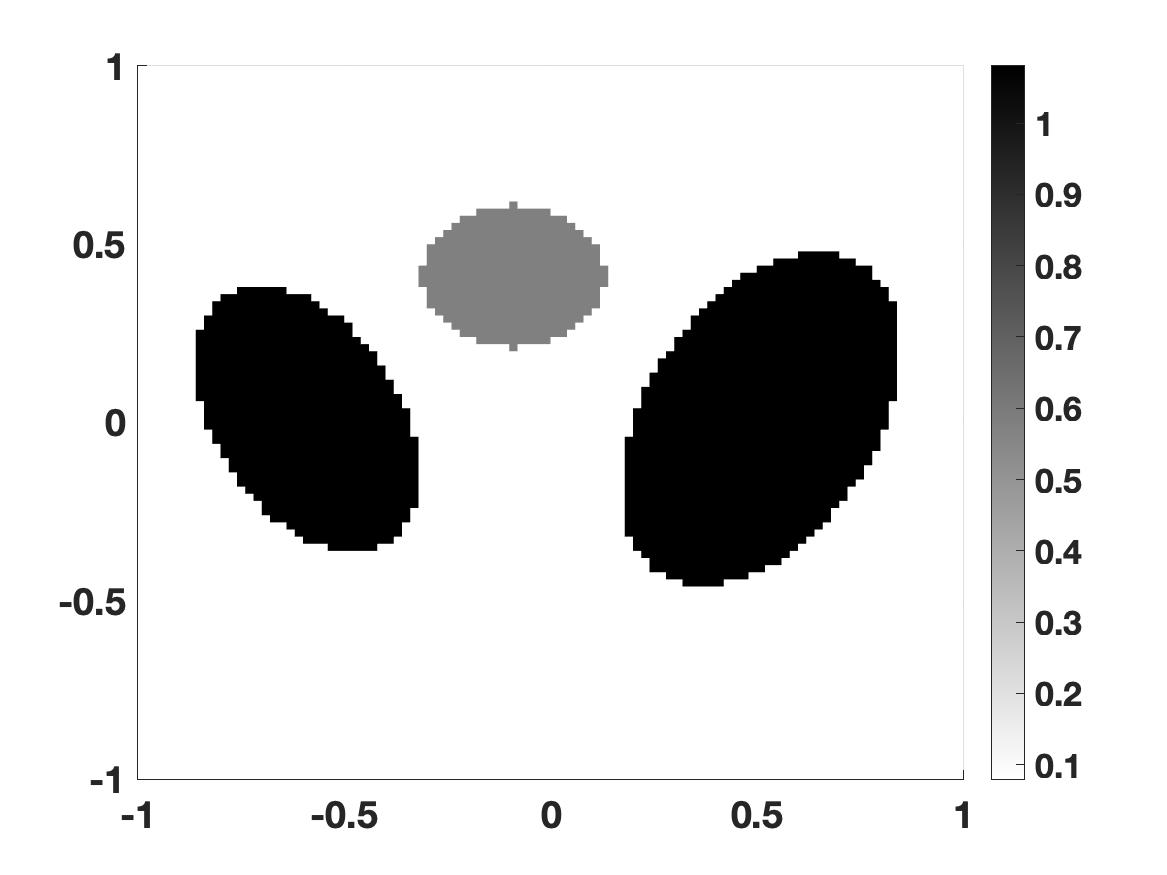}\label{sigma_heart_lung_actual2}}\\
\subfloat[Reconstructed $D$]{\includegraphics[width=0.35\textwidth]{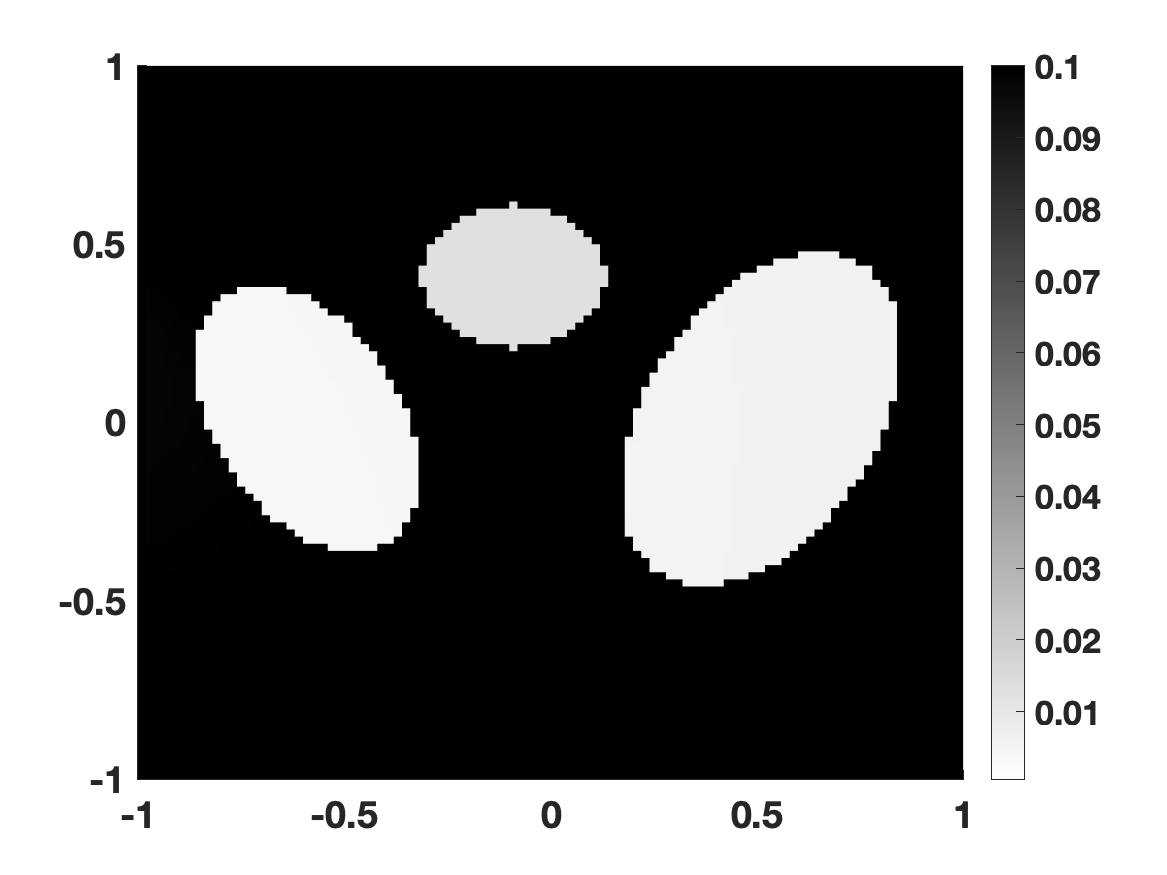}\label{D_heart_lung_recon2}}
\subfloat[Reconstructed $D$ with 5\% noise]{\includegraphics[width=0.35\textwidth]{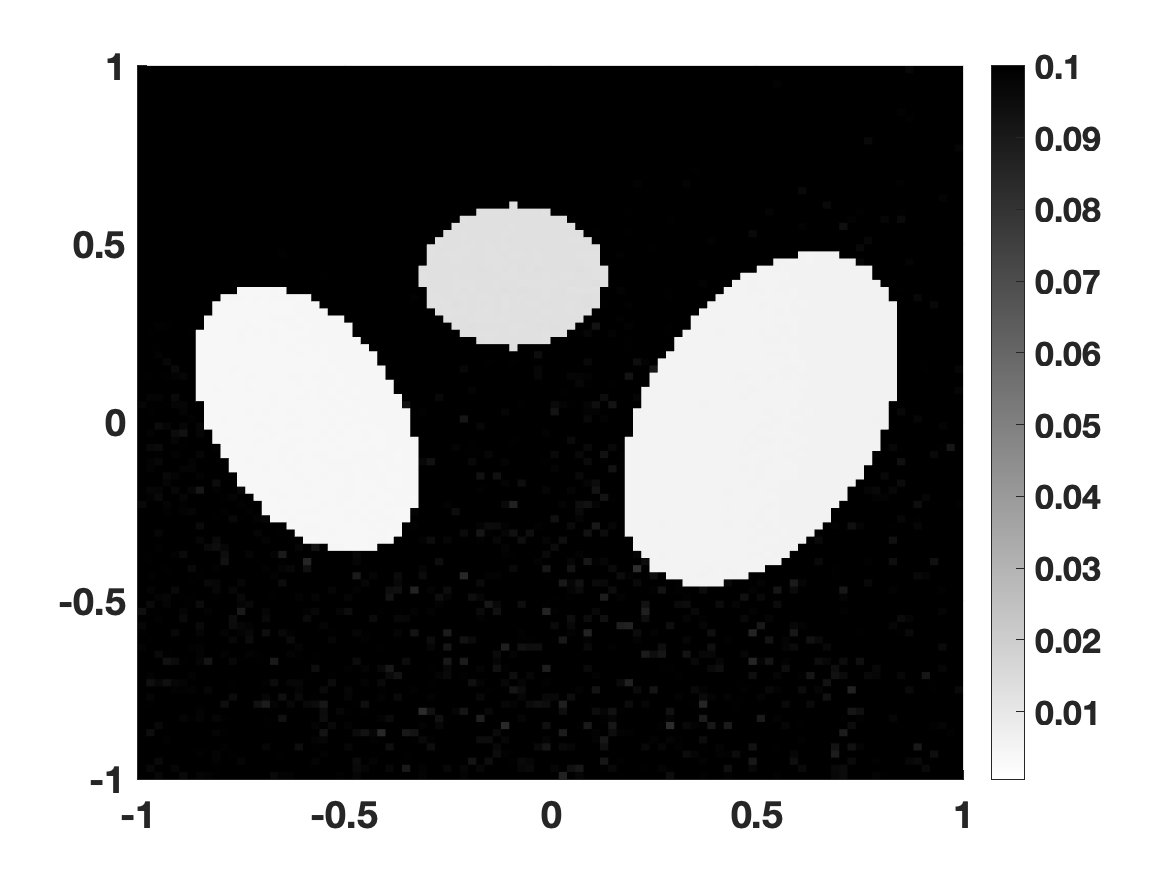}\label{D_heart_lung_recon2_5}}
\subfloat[Reconstructed $D$ with 10\% noise]{\includegraphics[width=0.35\textwidth]{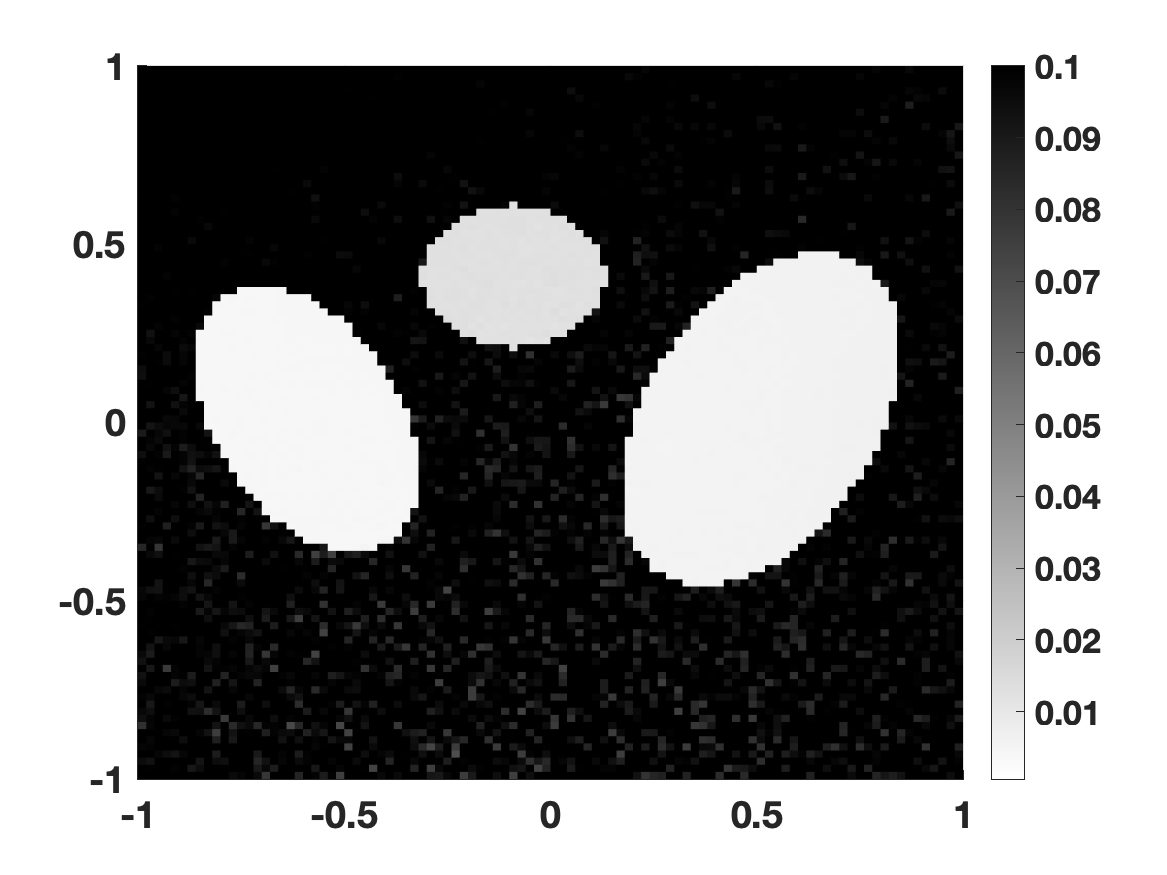}\label{D_heart_lung_recon2_10}}\\

\subfloat[Reconstructed $\sigma_a$]{\includegraphics[width=0.35\textwidth]{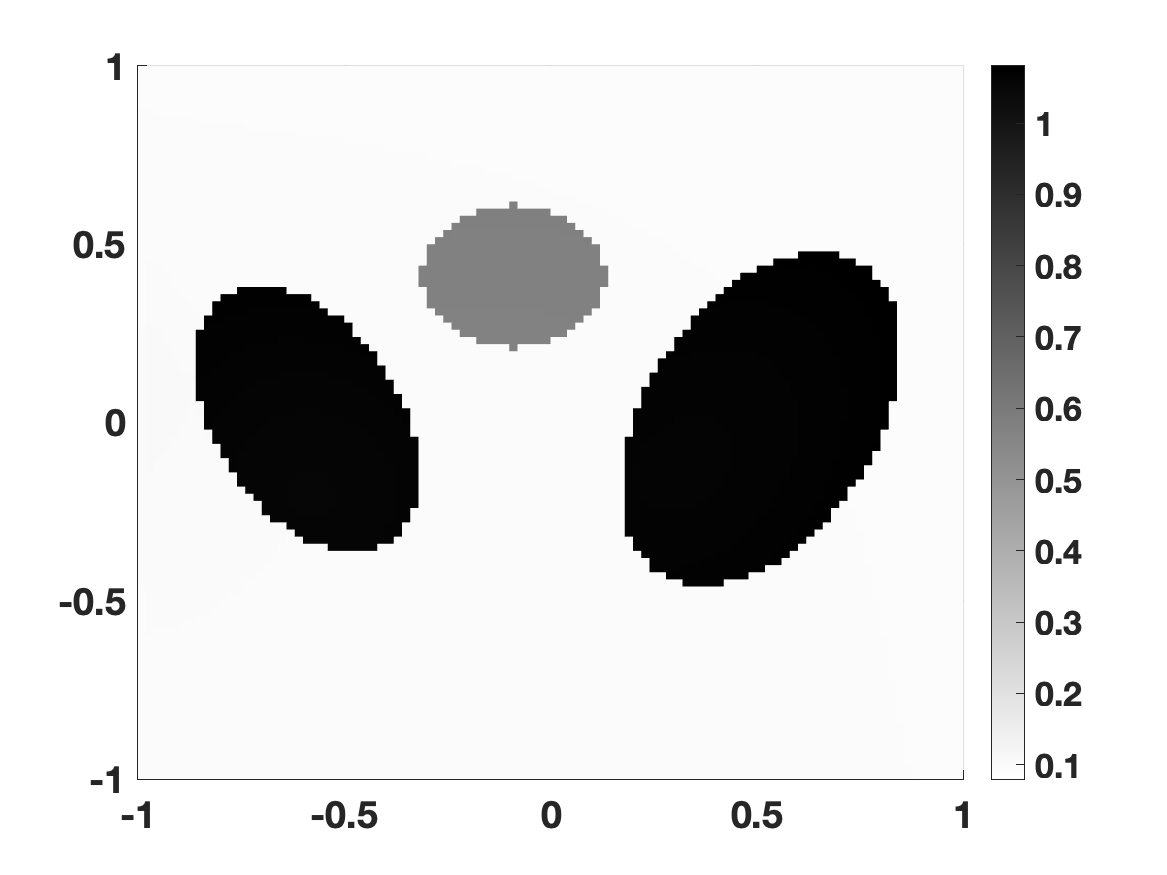}\label{sigma_heart_lung_recon2}}
\subfloat[Reconstructed $\sigma_a$ with 5\% noise]{\includegraphics[width=0.35\textwidth]{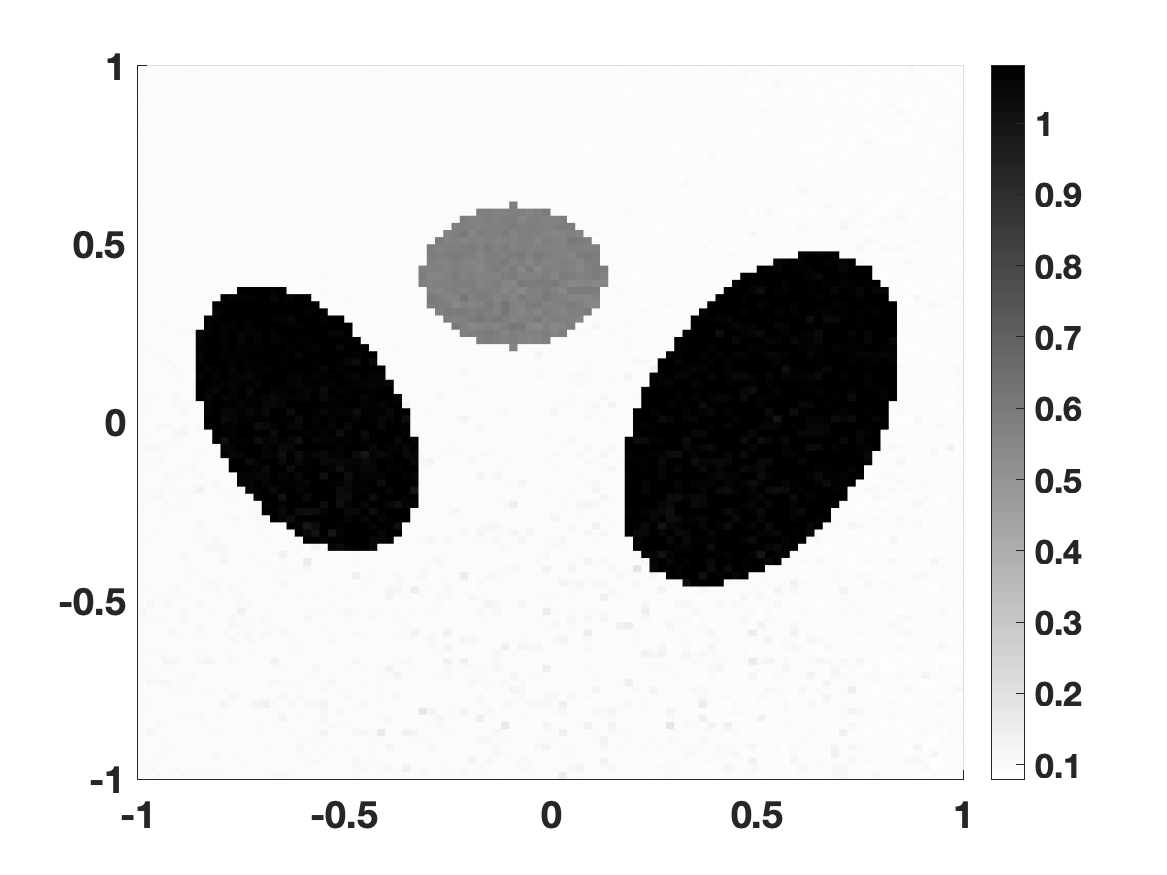}\label{sigma_heart_lung_recon2_5}}
\subfloat[Reconstructed $\sigma_a$ with 10\% noise]{\includegraphics[width=0.35\textwidth]{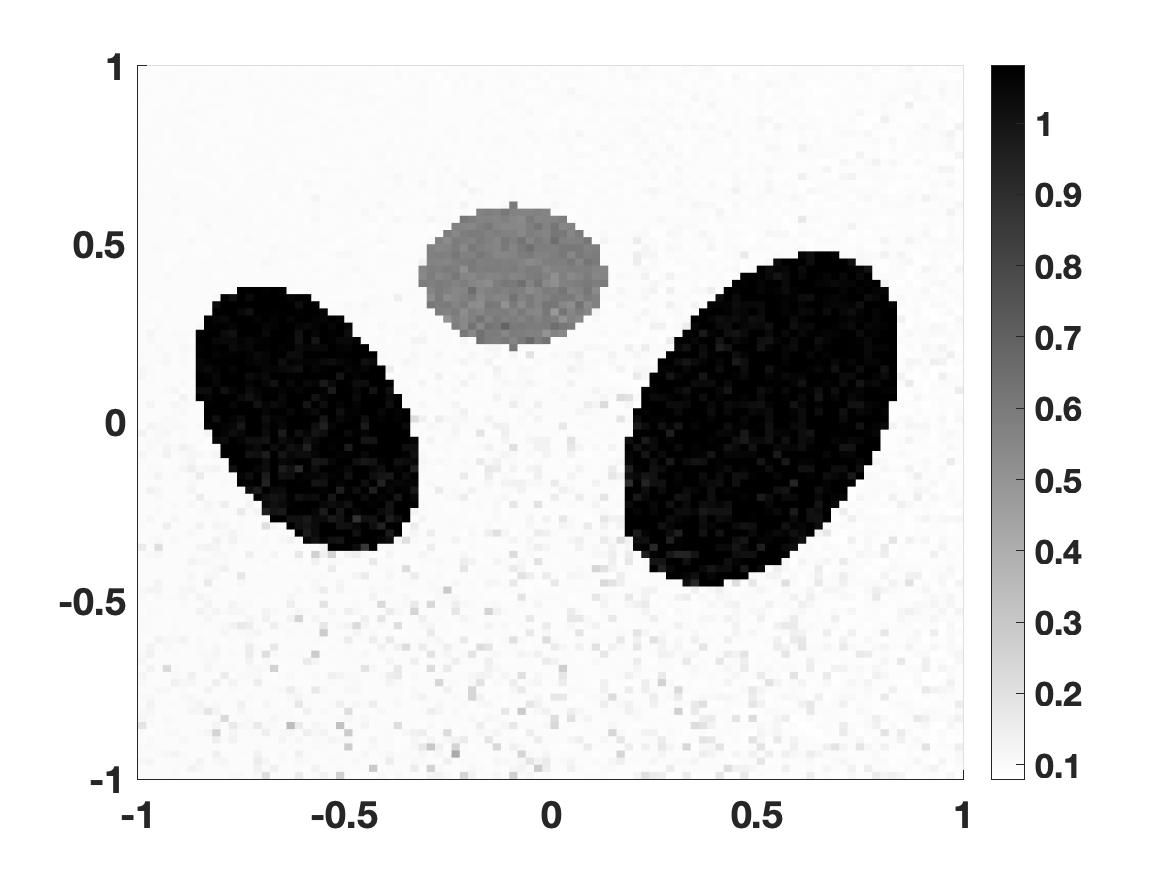}\label{sigma_heart_lung_recon2_10}}\\


\caption{Test Case 3: The actual and the reconstructed phantoms }
    \label{heart_lung_2}
  \end{figure}
  
{With this challenging setting, we perform our reconstructions of $D$ and $\sigma_a$ and 
depict the results in Figure \ref{heart_lung_2} with and without noise in the data. We again observe high contrast and resolution reconstructions of $D$ and $\sigma_a$, even in the presence of 5\% and 10\% noise in the data. The corresponding RMSE \% and PSNR values are presented in Table \ref{table:heart2}, which are similar to the previous test case.}
\begin{table}[H]
\centering
\begin{tabular}{|c|c|c|c|c|}
\hline
Noise \% & RMSE \% ($D$) & RMSE \% ($\sigma_a$) & PSNR ($D$) & PSNR ($\sigma_a$) \\ [0.5ex]
\hline
0 & 1.47 &3.25 &56.69 &35.88\\
5&1.43 &3.57 &54.18 &34.57\\
10 &2.13 &4.61 &47.70 &30.05\\
[1ex]
\hline
\end{tabular}
\caption{Test Case 3: RMSE \% and PSNR values for reconstructions of the second heart and lung phantom with different noise levels in the data}
\label{table:heart2}
\end{table}

\vspace{0.1cm}

\textbf{Test Case 4:} we perform another challenging test using the Shepp-Logan phantom that represents the structure of a human head, which comprises 
ellipses, disks of varying radii, and an elliptical annulus. In 
this case, $\sigma$ has the structure 
of Shepp-Logan phantom as given in \cite{Shepp1974}. The phantom is described as follows: The values of $\sigma$ and $D$, respectively, inside two big ellipses centered at (0.22,0) and (-0.22,0) are -0.3 and 0.02, inside the big disk centered at (0,0.35) are 0.5 and 0.003, inside the 2 small disks centered at (0,0.1) and (0,-0.1) are 0.5 and 0.003, inside the small ellipses centered at (-0.08,-0.605), (0,-0.606) and (0.06, -0.605) are 0.5 and 0.003, inside the elliptical annulus are 1 and 0.002, and elsewhere 0 and 0.006. The background $\sigma_b$ is chosen to be 0.5 in this case. 

\begin{figure}[H]
\centering
\subfloat[Exact $D$]{\includegraphics[width=0.35\textwidth]{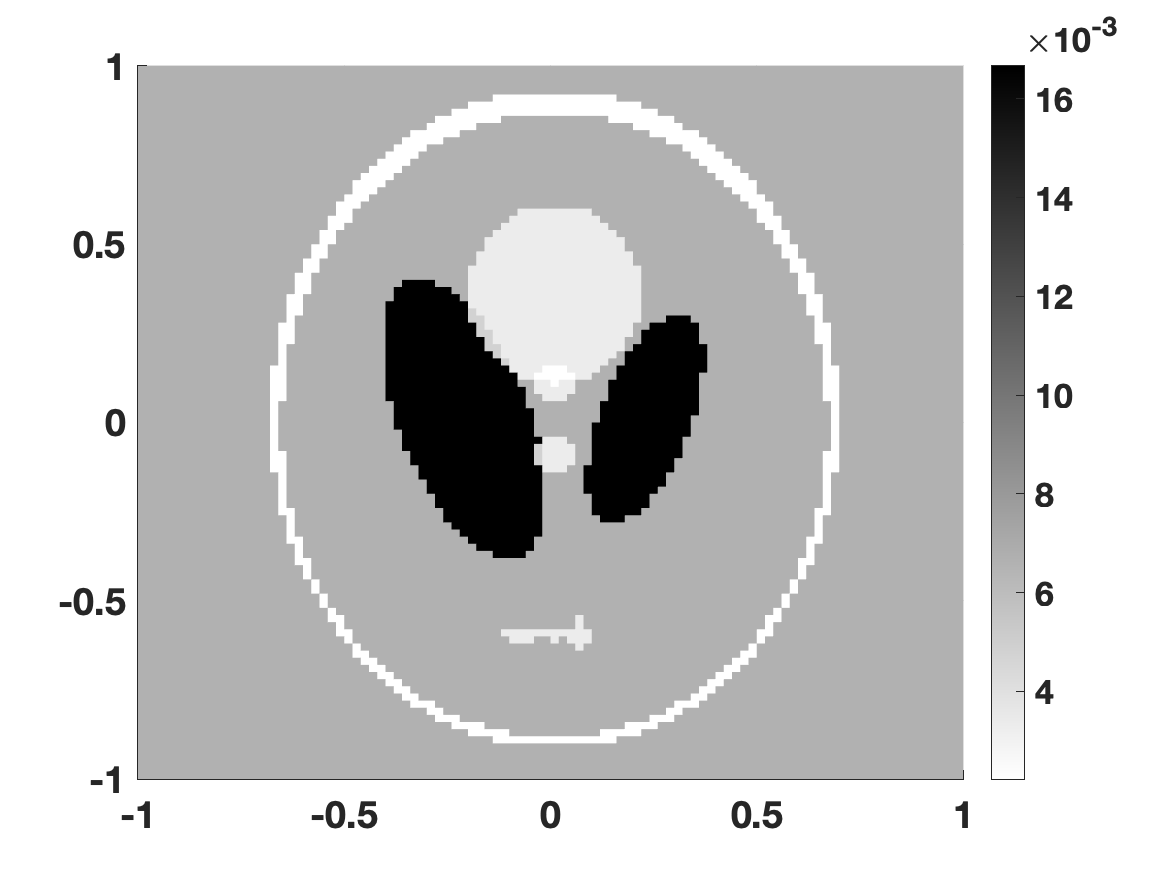}\label{D_shepp_actual}}
\subfloat[Exact $\sigma_a$]{\includegraphics[width=0.35\textwidth]{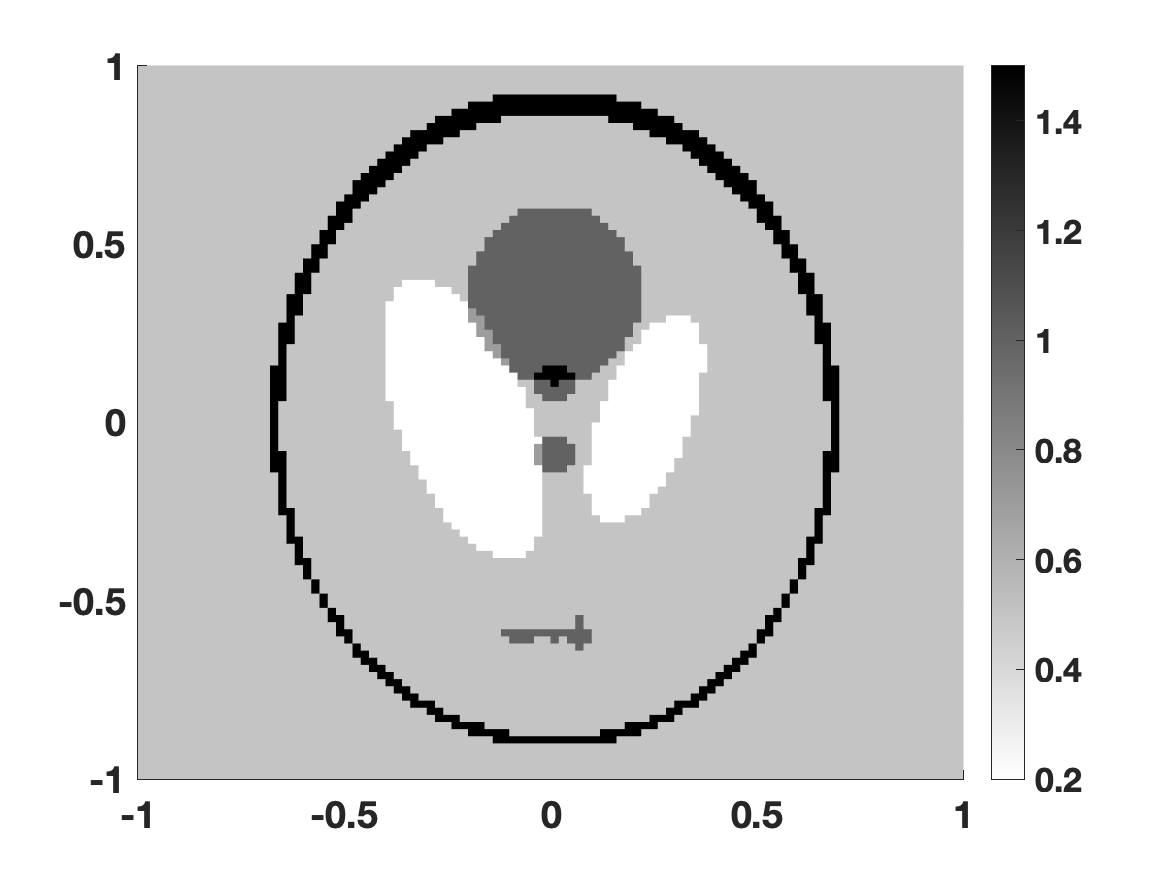}\label{sigma_shepp_actual}}\\
\subfloat[Reconstructed $D$]{\includegraphics[width=0.35\textwidth]{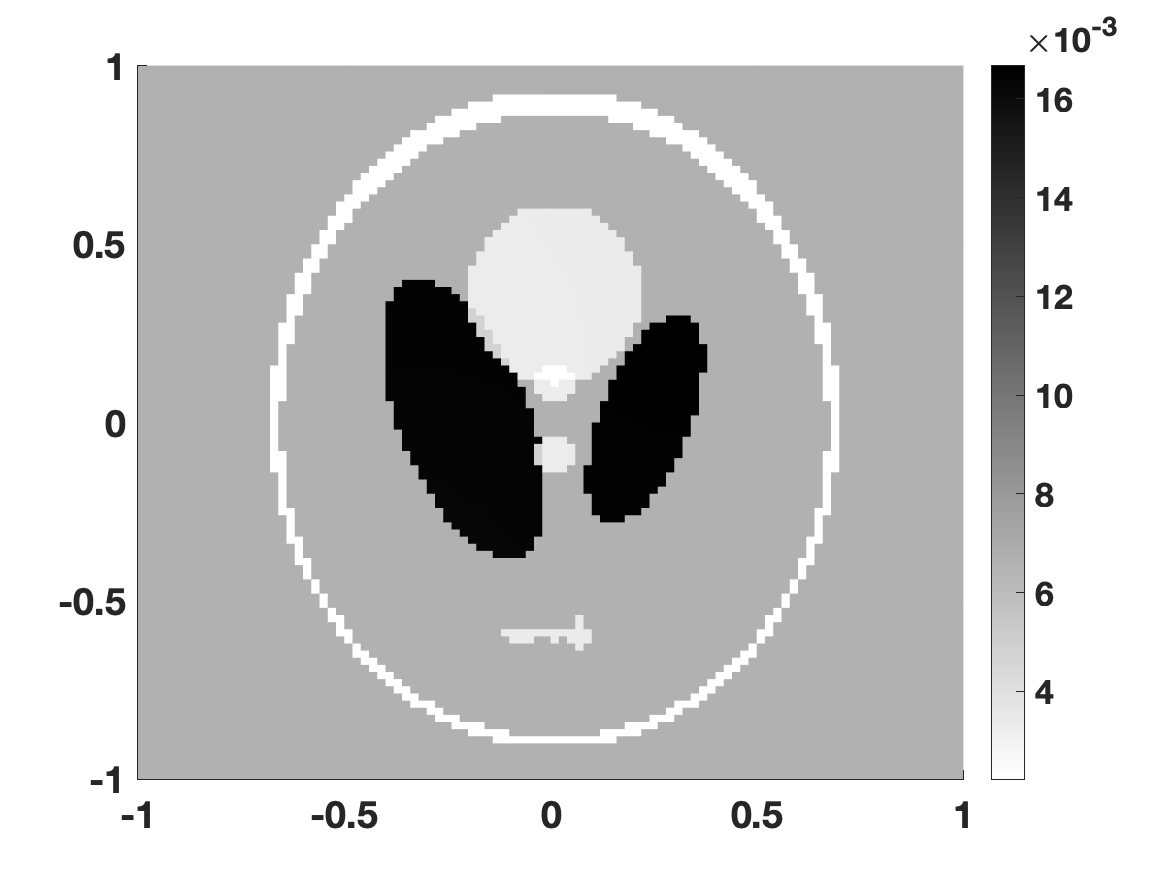}\label{D_shepp_recon}}
\subfloat[Reconstructed $D$ with 5\% noise]{\includegraphics[width=0.35\textwidth]{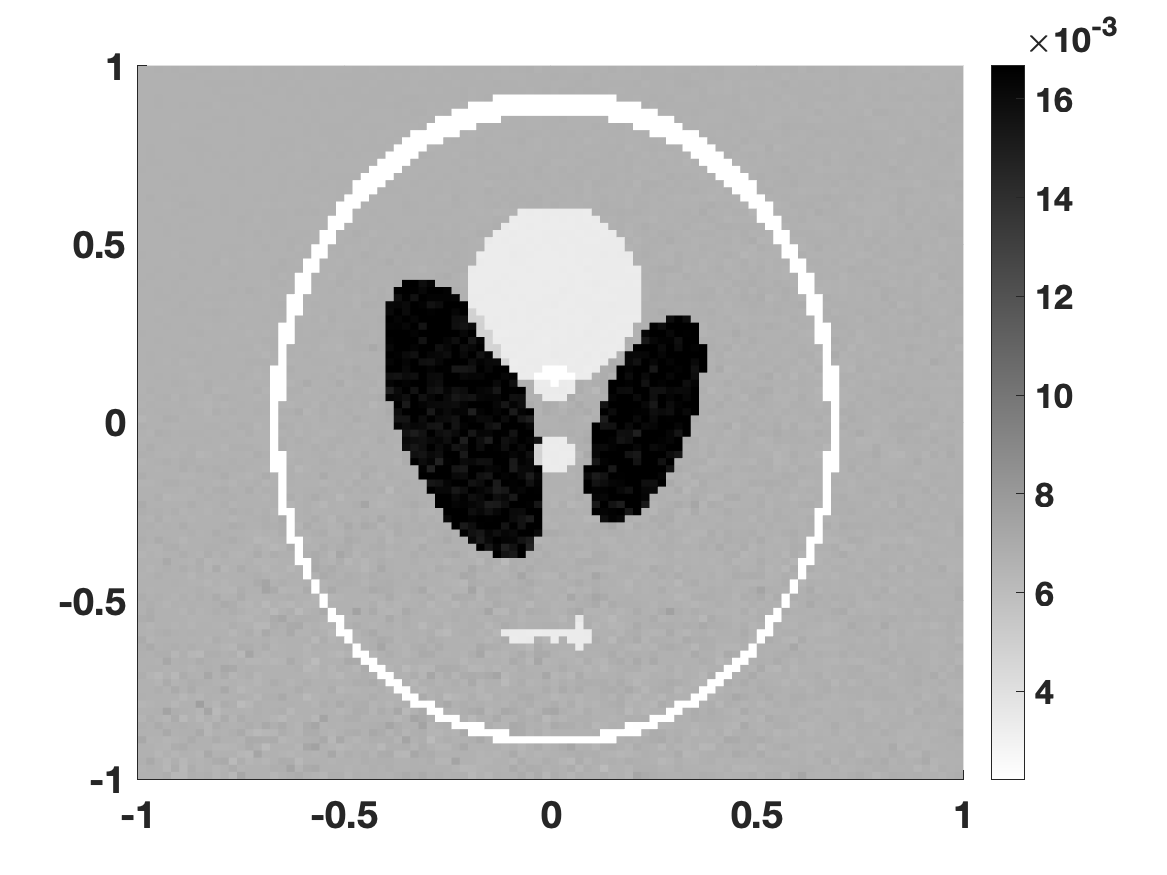}\label{D_shepp_recon_5}}
\subfloat[Reconstructed $D$ with 10\% noise]{\includegraphics[width=0.35\textwidth]{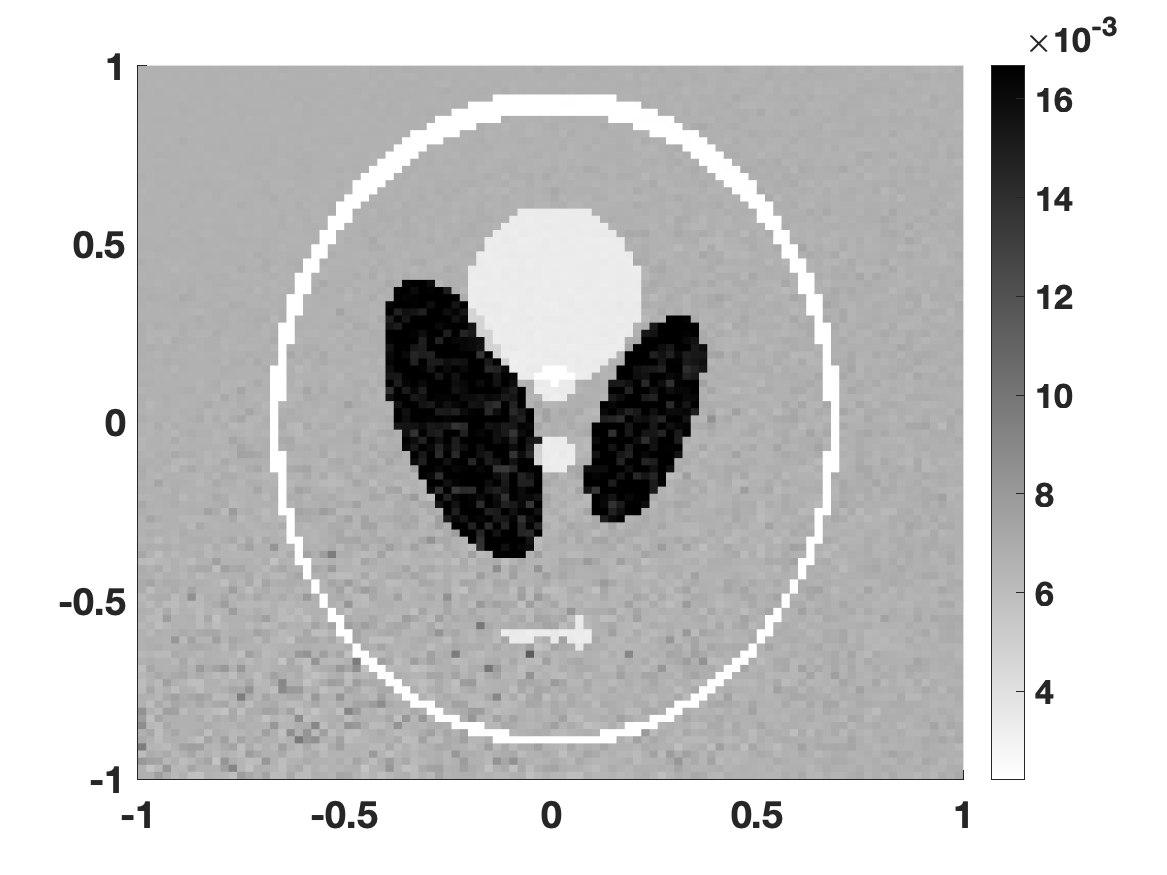}\label{D_shepp_recon_10}}\\

\subfloat[Reconstructed $\sigma_a$]{\includegraphics[width=0.35\textwidth]{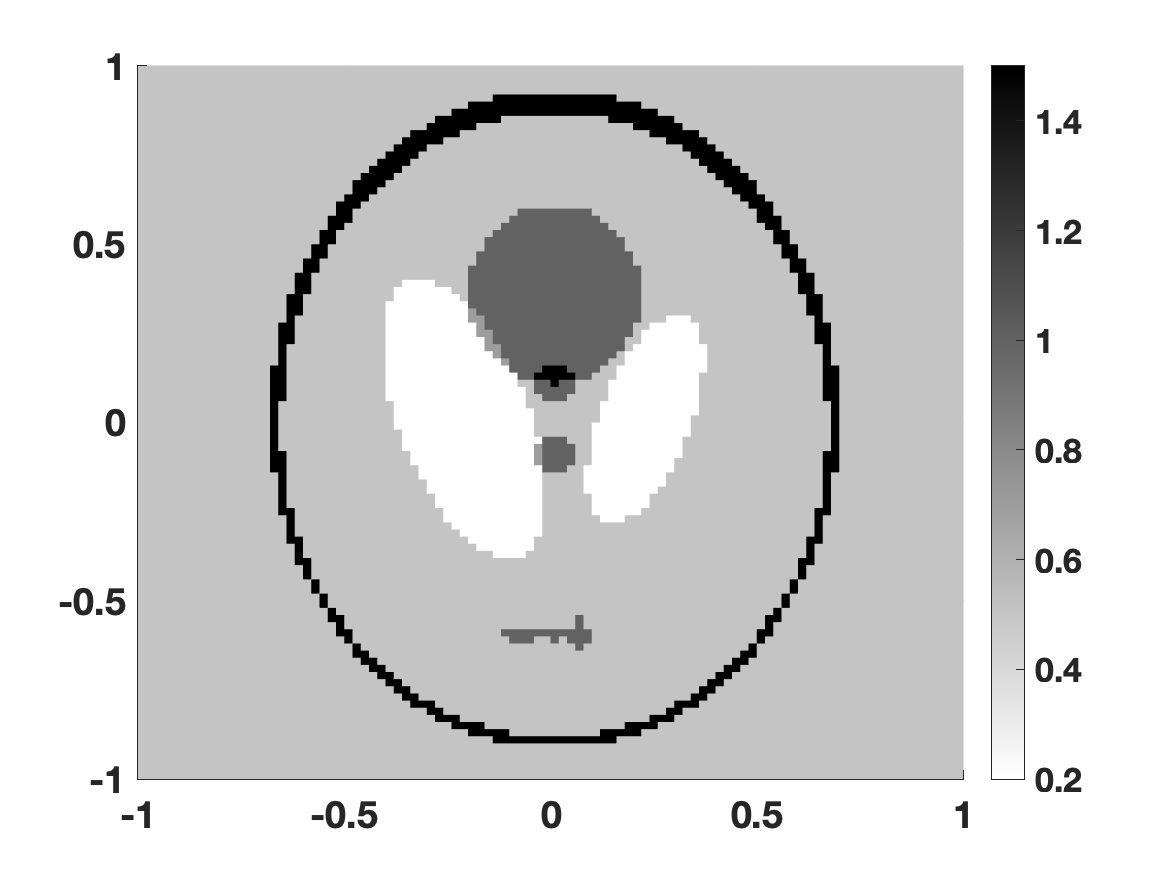}\label{sigma_shepp_recon}}
\subfloat[Reconstructed $\sigma_a$ with 5\% noise]{\includegraphics[width=0.35\textwidth]{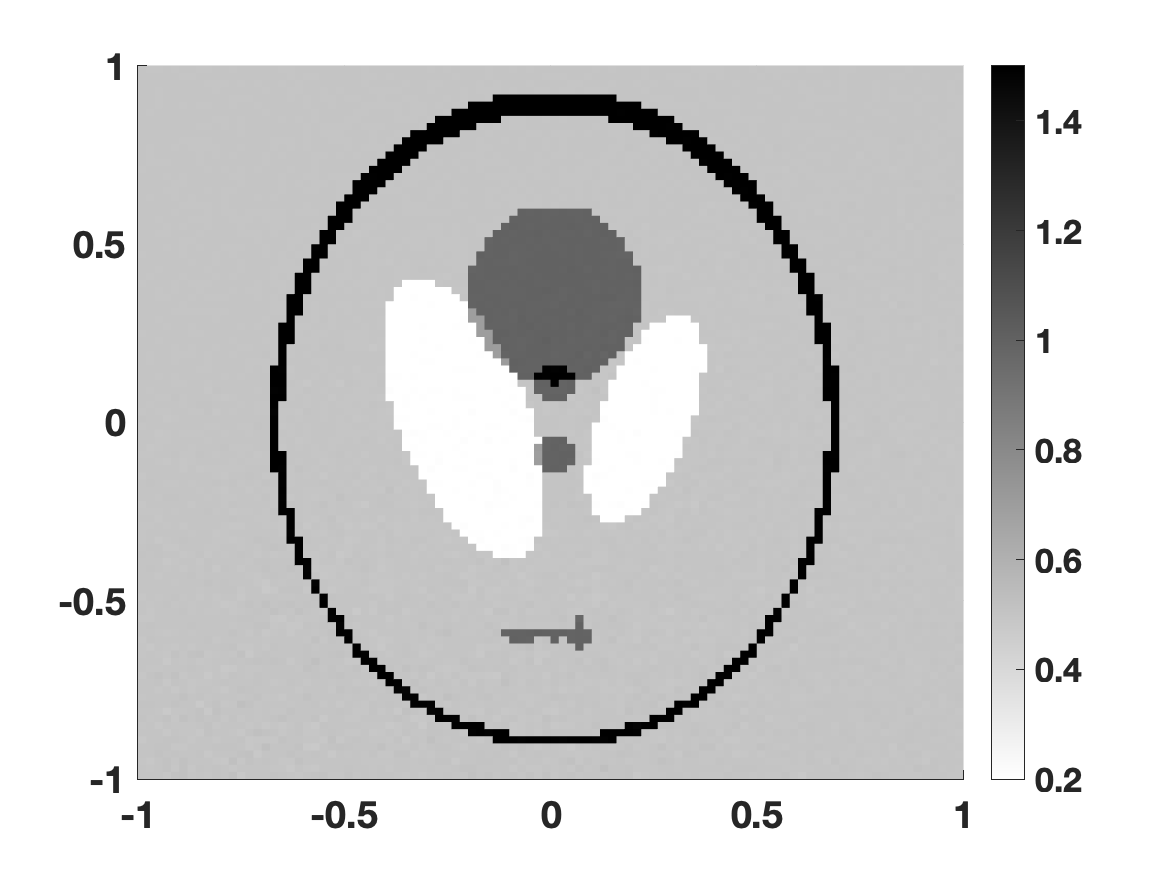}\label{sigma_shepp_recon_5}}
\subfloat[Reconstructed $\sigma_a$ with 10\% noise]{\includegraphics[width=0.35\textwidth]{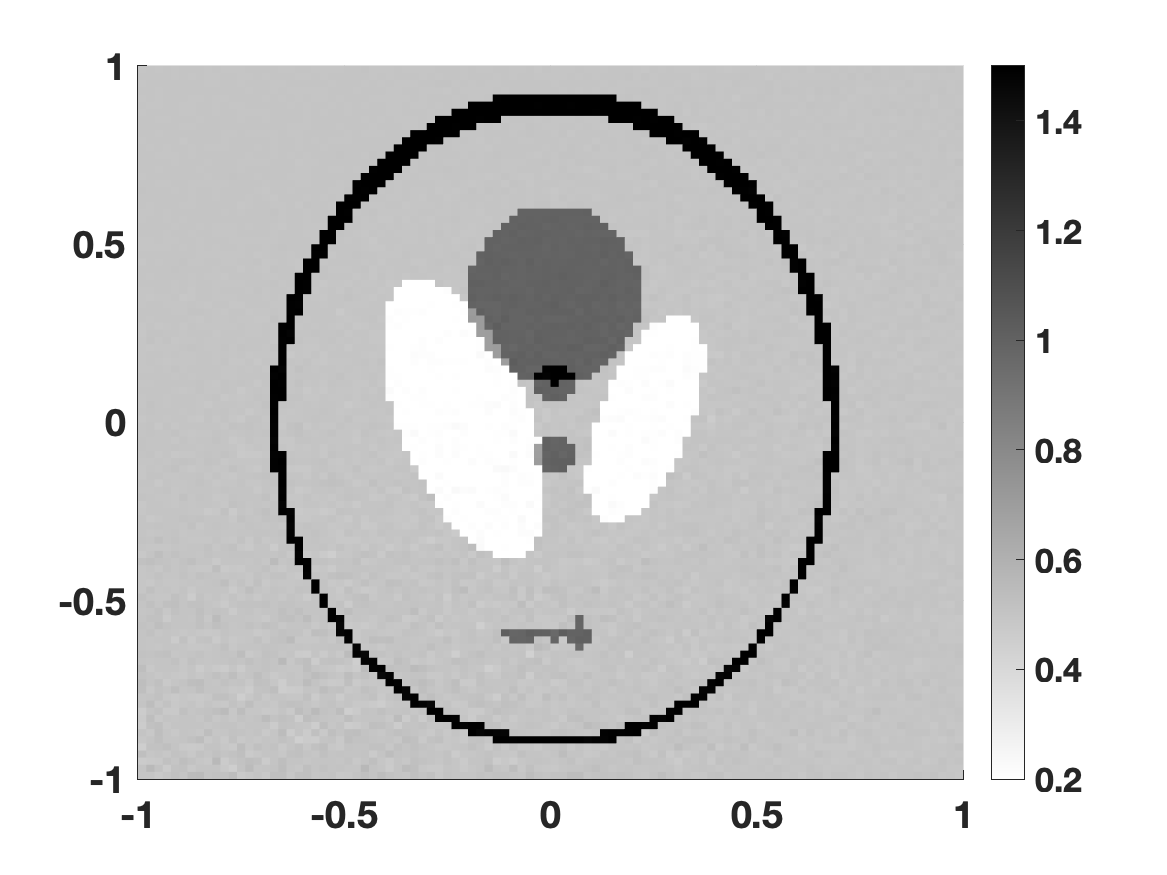}\label{sigma_shepp_recon_10}}\\
\caption{Test Case 4: The actual and the reconstructed phantoms }
    \label{shepp}
  \end{figure}
 
{For  test Case 4, Figure \ref{shepp} shows the exact and the reconstructed values of $D$ and $\sigma_a$ with high resolution and contrast. Figures \ref{D_shepp_recon_5}, \ref{D_shepp_recon_10} and \ref{sigma_shepp_recon_5}, \ref{sigma_shepp_recon_10} depict the reconstruction with 5\% and 10\% multiplicative Gaussian noise in the interior data, respectively. We obtain high quality reconstruction with our SQH-QPAT framework also in the case of objects with holes and inclusions. The corresponding RMSE \% and PSNR values are presented in Table \ref{table:shepp1}. We do observe that the RMSE \% is higher than in the case of the heart and lung phantom, which is expected since there are far more finer features in the Shepp-Logan phantom. However, the PSNR values are still at a comparable level to the previous test cases that indicate that the reconstructions have high quality.}

\begin{table}[H]
\centering
\begin{tabular}{|c|c|c|c|c|}
\hline
Noise \% & RMSE \% ($D$) & RMSE \% ($\sigma_a$) & PSNR ($D$) & PSNR ($\sigma_a$) \\ [0.5ex]
\hline
0 & 16.21 &6.54 &55.89 &38.57\\
5&16.36 &6.68 &55.80 &38.02\\
10 &16.58 &6.71 &55.47 &36.31\\
[1ex]
\hline
\end{tabular}
\caption{Test Case 4: RMSE \% and PSNR values for reconstructions of the Shepp-Logan phantom with different noise levels in the data}
\label{table:shepp1}
\end{table}

\vspace{0.1cm}
 
  \textbf{Test Case 5:} while in the previous test cases the phantoms to be reconstructed represented organs in a human being, in this test case, we present results of detecting a tumor along with organs using our SQH-QPAT framework. We consider $\sigma$ that represents the organs as the same Shepp-Logan phantom of test Case 4. The $\sigma$ for the tumors are represented by two disks: one of them having a large contrast with value 1.5 and the other having a contrast similar to the some of the organs with value 0.2. The corresponding value of $D$ is 0.001 and 0.004, respectively. The background value $\sigma_b = 0.5$. 

\begin{figure}[H]
\centering
\subfloat[Exact $D$]{\includegraphics[width=0.35\textwidth]{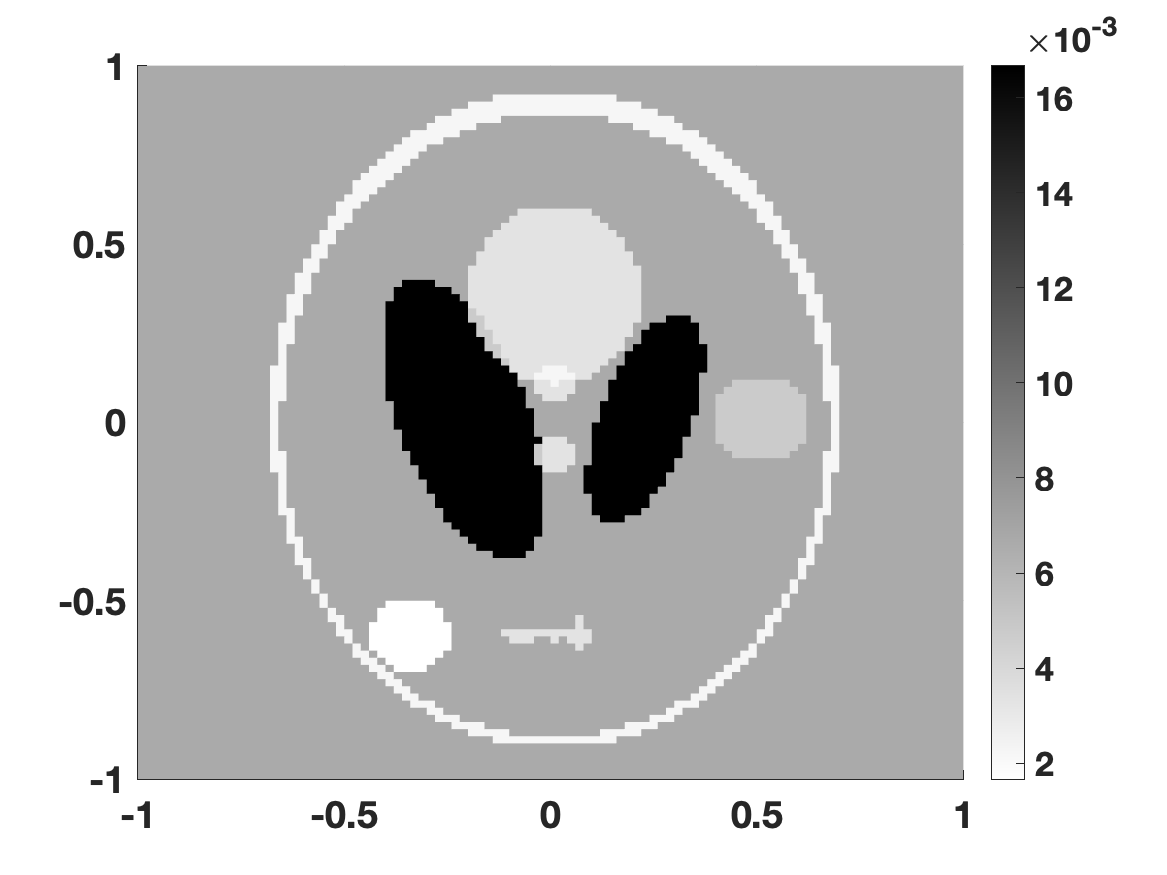}\label{D_shepp2_actual}}
\subfloat[Exact $\sigma_a$]{\includegraphics[width=0.35\textwidth]{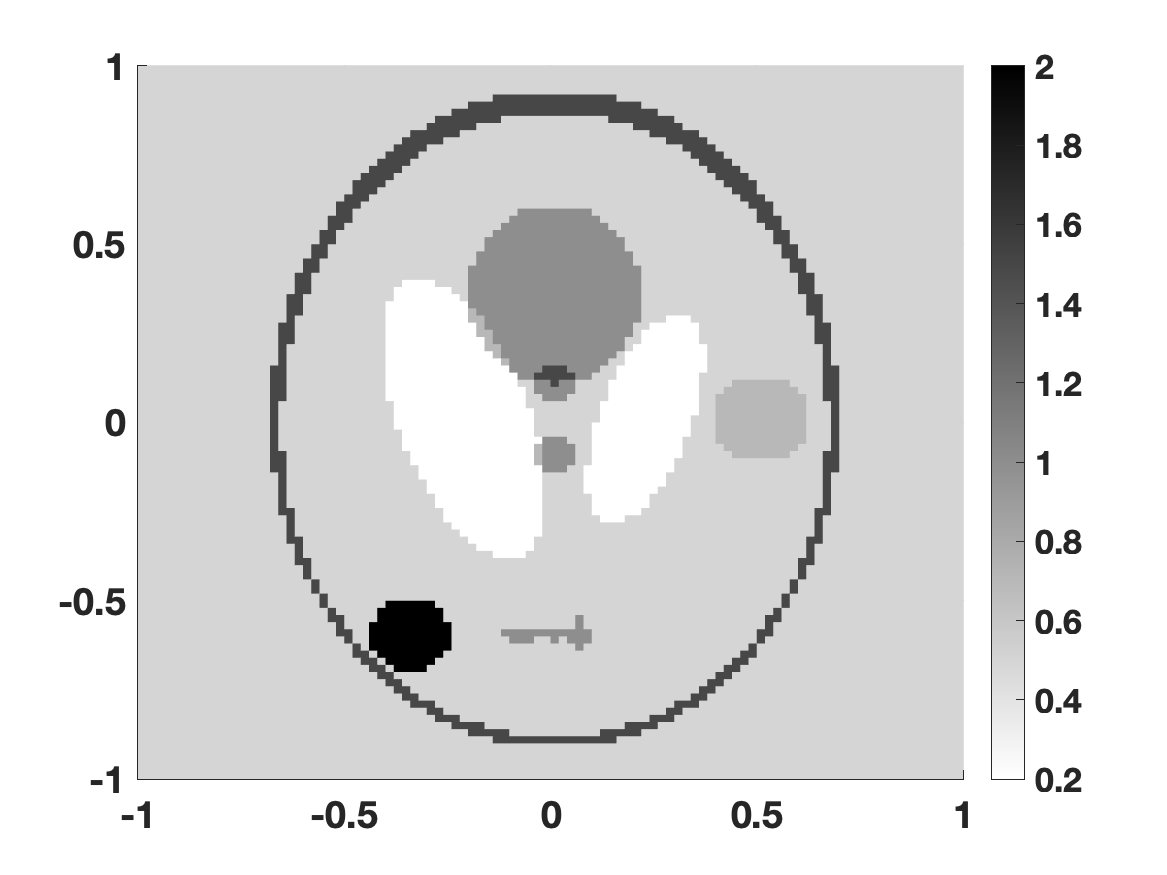}\label{sigma_shepp2_actual}}
\subfloat[Convergence history of SQH-QPAT iteration]{\includegraphics[width=0.35\textwidth]{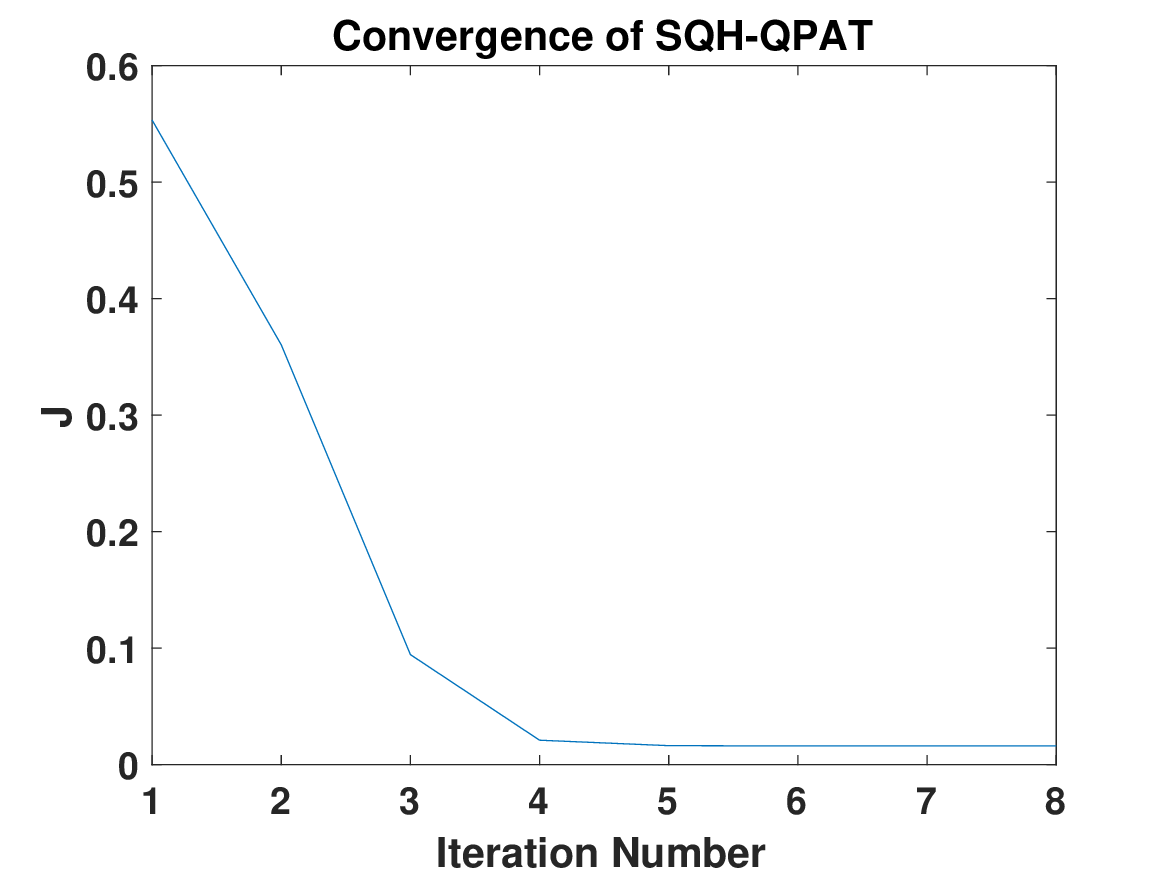}\label{convergence}}\\
\subfloat[Reconstructed $D$]{\includegraphics[width=0.35\textwidth]{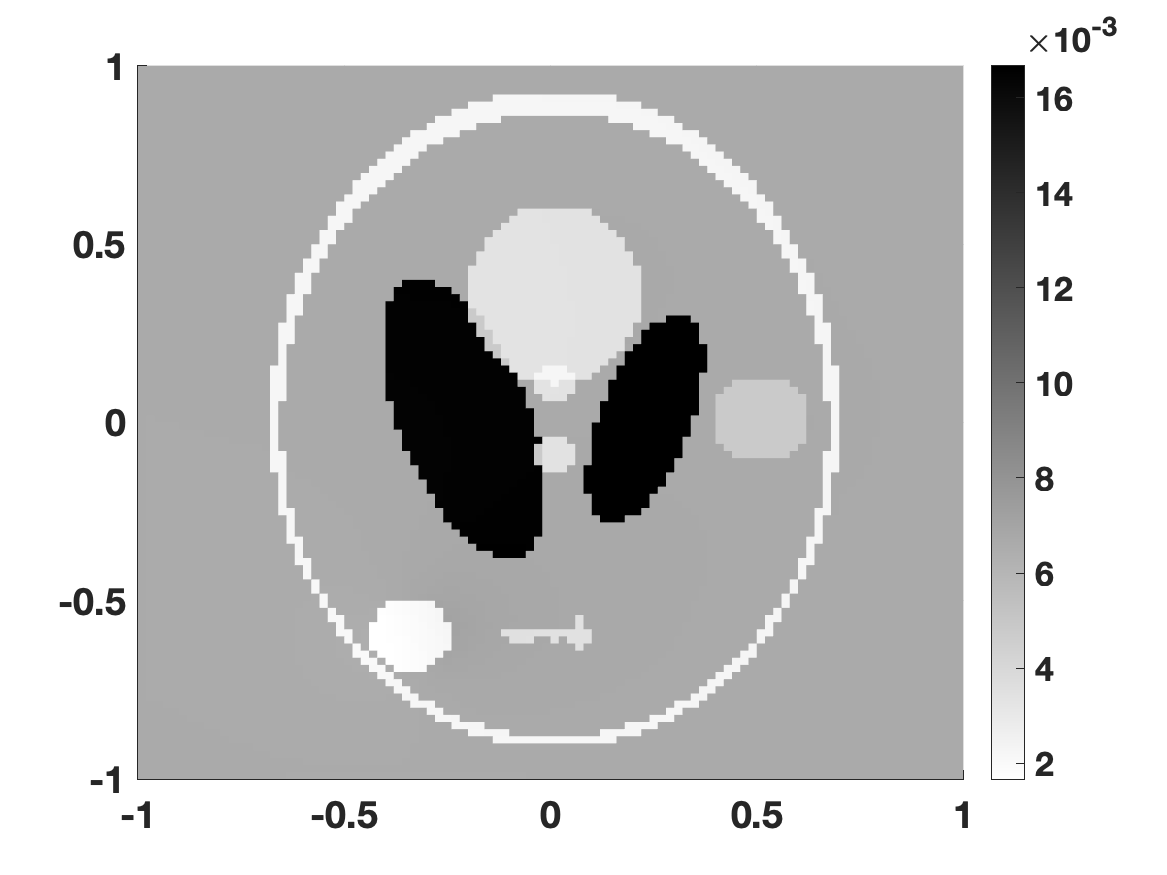}\label{D_shepp2_recon}}
\subfloat[Reconstructed $D$ with 5\% noise]{\includegraphics[width=0.35\textwidth]{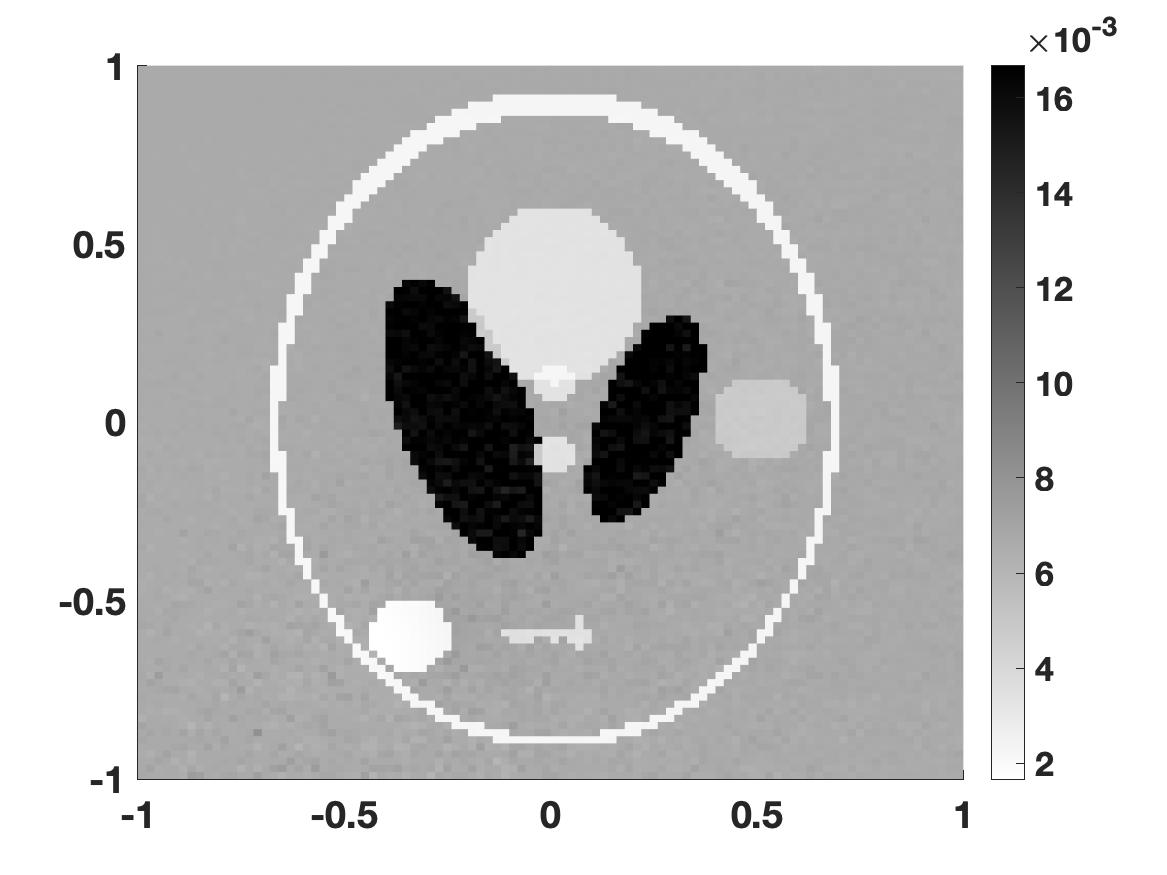}\label{D_shepp2_recon_5}}
\subfloat[Reconstructed $D$ with 10\% noise]{\includegraphics[width=0.35\textwidth]{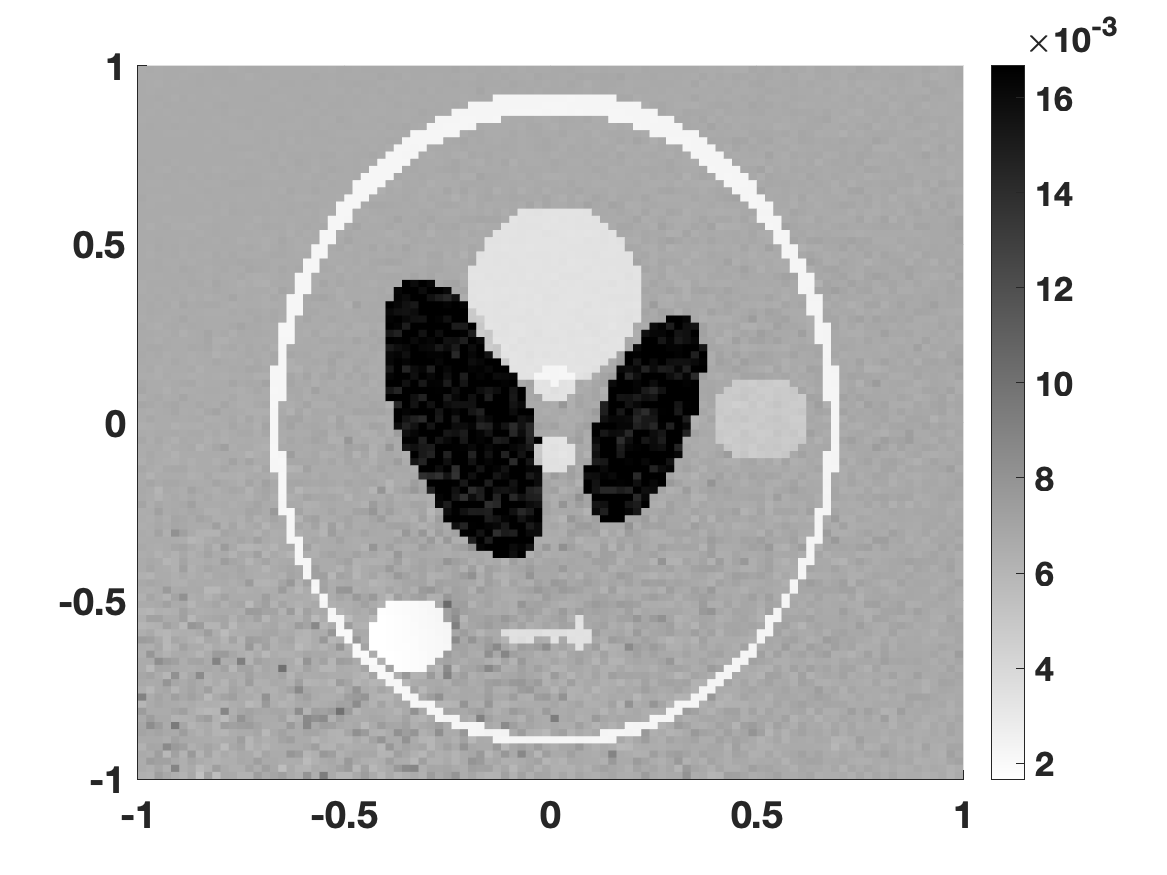}\label{D_shepp2_recon_10}}\\

\subfloat[Reconstructed $\sigma_a$]{\includegraphics[width=0.35\textwidth]{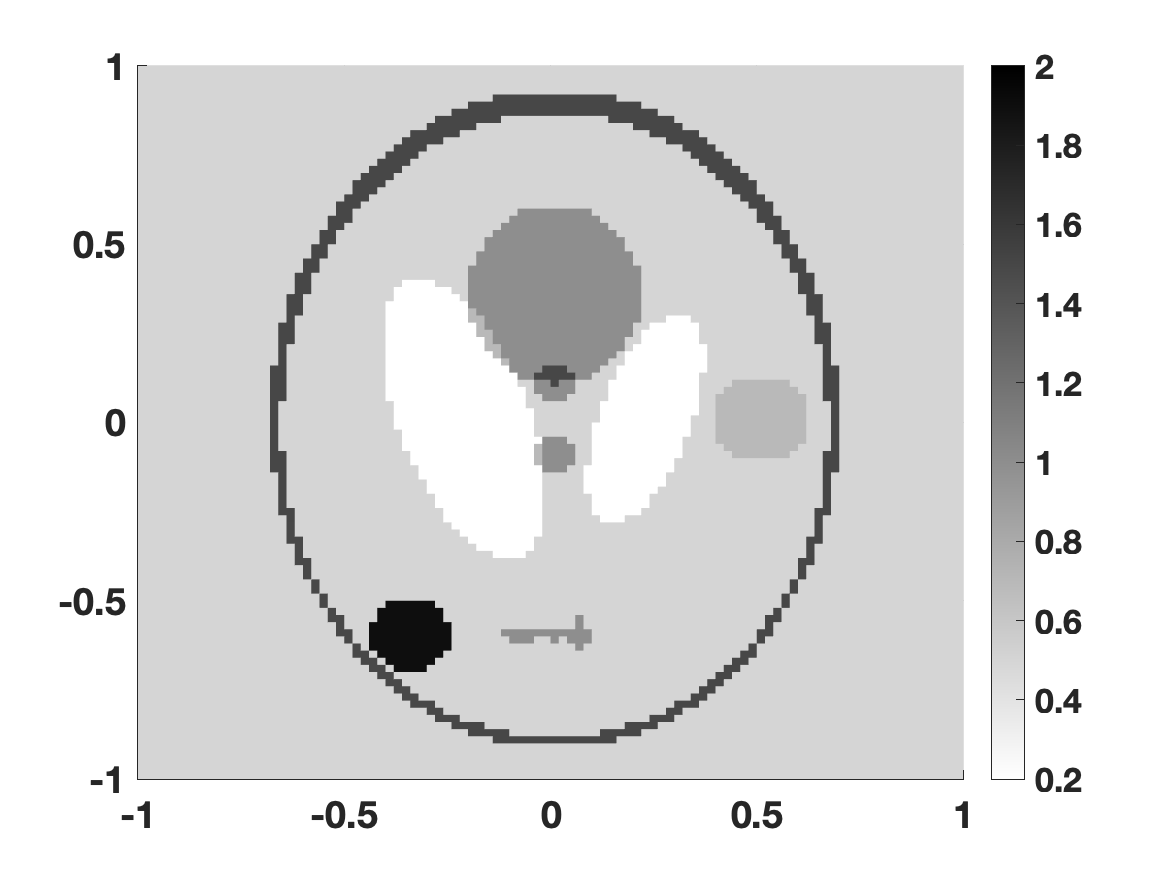}\label{sigma_shepp2_recon}}
\subfloat[Reconstructed $\sigma_a$ with 5\% noise]{\includegraphics[width=0.35\textwidth]{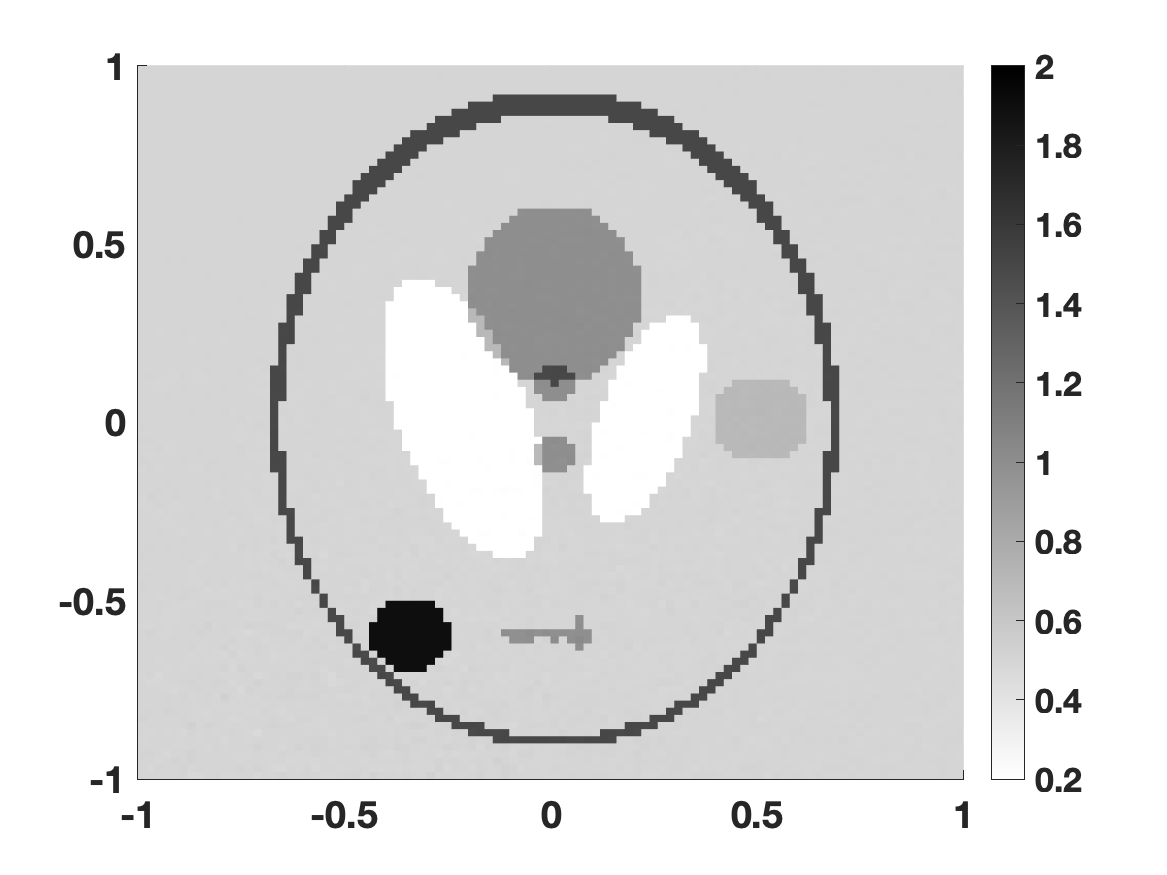}\label{sigma_shepp2_recon_5}}
\subfloat[Reconstructed $\sigma_a$ with 10\% noise]{\includegraphics[width=0.35\textwidth]{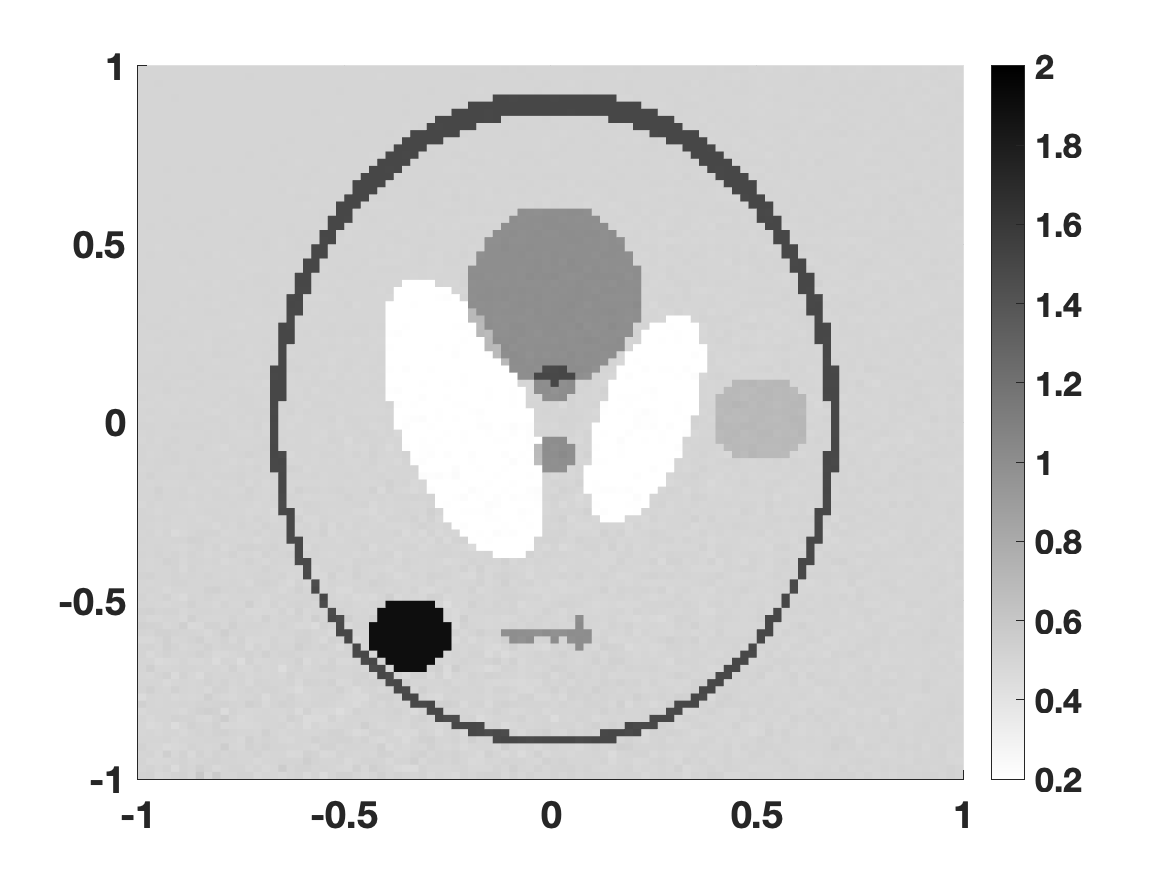}\label{sigma_shepp2_recon_10}}\\

\caption{Test Case 5: The actual and the reconstructed phantoms }
    \label{shepp2}
  \end{figure}
  
{For test Case 5, Figure \ref{shepp2} shows the exact and the reconstructed values of $D$ and $\sigma_a$ with and without noise in the interior data. We see that our SQH-QPAT framework is able to reconstruct very well the tumors alongside the organs, which further demonstrates the wide applicability of our reconstruction framework. Figure \ref{convergence} shows the convergence history of the SQH-QPAT scheme. Notice the strict monotonic decrease in the functional values, with stopping criteria fulfilled in 8 iterations. From Table \ref{table:shepp2},  we also observe similar RMSE \% and PSNR values, as in the previous test case, demonstrating high quality reconstructions.}

\begin{table}[H]
\centering
\begin{tabular}{|c|c|c|c|c|}
\hline
Noise \% & RMSE \% ($D$) & RMSE \% ($\sigma_a$) & PSNR ($D$) & PSNR ($\sigma_a$) \\ [0.5ex]
\hline
0 & 16.50 &7.19 &55.89 &36.62\\
5&16.69 &7.40 &55.79 &36.25\\
10 &16.90 &7.40 &55.45 &35.12\\
[1ex]
\hline
\end{tabular}
\caption{Test Case 5: RMSE \% and PSNR values for reconstructions of the Shepp-Logan phantom with different noise levels in the data}
\label{table:shepp2}
\end{table}

  \section{Conclusion}
  
A new optimization framework for reconstruction with high contrast and resolution of optical diffusion and absorption coefficients in quantitative photoacoustic tomography (QPAT) was presented. The resulting QPAT inverse problem was formulated as the minimization of a best-fit functional with a 
tentative prior relating the diffusion and absorption coefficients 
and $L^2$ and $L^1$ regularization terms, and
subject to the constraint of the steady photon-propagation equation. 

For the solution of this problem, a robust and fast sequential quadratic hamiltonian algorithm based on the Pontryagin maximum principle 
was developed and analysed. Results of numerical experiments 
were presented that demonstrate the ability of the proposed framework to 
efficiently compute reconstructions of the optical coefficients with high contrast and resolution.

\section*{Acknowledgments}
This work was partially supported by the German BMBF-Project iDeLIVER : Intelligent MR Diagnosis of the Liver by Linking Model and Data-Driven Processes, and by the US National Science Foundation Grant No: 2309491, and the University of Texas at Arlington, Research Enhancement Program Grant No: 2022-605.

\begin{appendices}
\section{Proof of Theorem \ref{sqh2}}

We have 
$$
H_\epsilon(x,y,D,\sigma,D^k,\sigma^k,u_1^k,u_2^k,q_1^k,q_2^k)\leq H_\epsilon(x,y,w,z,D^k,\sigma^k,u_1^k,u_2^k,q_1^k,q_2^k)
$$ 
for all $w \in [D_l,D_r],~z \in [\sigma_l,\sigma_r]$. Hence, it follows that 
\begin{equation}\label{sqh6_6}
\begin{aligned}
H_\epsilon(x,y,D,\sigma,D^k,\sigma^k,u_1^k,u_2^k,q_1^k,q_2^k)\leq &H_\epsilon(x,y,D^k,\sigma^k,D^k,\sigma^k,u_1^k,u_2^k,q_1^k,q_2^k)\\
=& H(x,y,D^k,\sigma^k,u_1^k,u_2^k,q_1^k,q_2^k)
\end{aligned}
\end{equation}
for all $(x,y)\in\Omega$. Let 
\[
\begin{aligned}
L(x,y,D,\sigma,u_1,u_2) = & \frac{\alpha}{2}\sum_{i=1}^2(\mathcal{H}(x,y,\sigma,u_i) - G_i^{\delta}(x,y))^2+\frac{\xi_1}{2}\sigma^2(x,y)\\
&+ \frac{\xi_2}{2}\left(D(x,y)-\frac{1}{100~(\sigma(x,y)+\sigma_b)}\right)^2
+\gamma  |\sigma(x,y)|.
\end{aligned}
\]
Then, we have
\begin{equation}\label{eq:1}
J(D,\sigma,u_1,u_2)-J(D^k,\sigma^k,u_1^k,u_2^k)=\int_{\Omega}L(x,y,D,\sigma,u_1,u_2)-L(x,y,D^k,\sigma^k,u_1^k,u_2^k)~dxdy.
\end{equation}
Adding and subtracting $D^k~\nabla q_i^k\cdot \nabla u_i+ (\sigma^k+\sigma_b) ~q_i^ku_i$ and 
$D^k~\nabla q_i^k\cdot \nabla u_i^k + (\sigma^k+\sigma_b) ~q_i^ku_i^k$ to the right-hand side of \eqref{eq:1}, we obtain
\[
\begin{aligned}
&J(D,\sigma,u_1,u_2)-J(D^k,\sigma^k,u_1^k,u_2^k)\\
=&\int_{\Omega}L(x,y,D,\sigma,u_1,u_2)~dxdy+\int_\Omega\sum_{i=1}^2 \left(D^k~\nabla q_i^k\cdot \nabla u_i+ (\sigma^k+\sigma_b) ~q_i^ku_i \right)~dxdy\\
&-\int_\Omega\sum_{i=1}^2 \left(D^k~\nabla q_i^k\cdot \nabla u_i+ (\sigma^k+\sigma_b) ~q_i^ku_i \right) ~dxdy\\
\int_{\Omega}&- L(x,y,D^k,\sigma^k,u_1^k,u_2^k)~dxdy-\int_\Omega\sum_{i=1}^2 \left(D^k~\nabla q_i^k\cdot \nabla u_i^k + (\sigma^k+\sigma_b) ~q_i^ku_i^k\right)~dxdy\\
&+\int_\Omega\sum_{i=1}^2 \left(D^k~\nabla q_i^k\cdot \nabla u_i^k + (\sigma^k+\sigma_b) ~q_i^ku_i^k \right)~dxdy.
\end{aligned}
\]

This gives us
\begin{align*}
&J(D,\sigma,u_1,u_2)-J(D^k,\sigma^k,u_1^k,u_2^k)\\
=&\int_{\Omega}  H(x,y,D,\sigma,u_1,u_2,q_1^k,q_2^k) -  H(x,y,D^k,\sigma^k,u_1^k,u_2^k,q_1^k,q_2^k) -\sum_{i=1}^2 \left(D^k~\nabla q_i^k\cdot \nabla u_i+ (\sigma^k+\sigma_b) ~q_i^ku_i \right)\\
&+\sum_{i=1}^2 \left(D^k~\nabla q_i^k\cdot \nabla u_i^k + (\sigma^k+\sigma_b) ~q_i^ku_i^k \right)~dxdy.\\
=&\int_{\Omega}  H(x,y,D,\sigma,u_1^k+\delta u_1,u_2^k + \delta u_2,q_1^k,q_2^k) -  H(x,y,D^k,\sigma^k,u_1^k,u_2^k,q_1^k,q_2^k) \\
&-\sum_{i=1}^2 \left(D^k~\nabla q_i^k\cdot \nabla u_i+ (\sigma^k+\sigma_b) ~q_i^ku_i \right)+\sum_{i=1}^2 \left(D^k~\nabla q_i^k\cdot \nabla u_i^k + (\sigma^k+\sigma_b) ~q_i^ku_i^k \right)~dxdy.\\
=&\int_{\Omega}  H(x,y,D,\sigma,u_1^k,u_2^k ,q_1^k,q_2^k) +\Gamma \alpha \sum_{i=1}^2\left[-(\sigma^k + \sigma_b)[\mathcal{H}(x,y,\sigma^k,u_i^k)-G^\delta_i]\delta u_i
+ \dfrac{1}{2} (\sigma^k + \sigma_b) (\delta u_i)^2 \right]\\
&  -H(x,y,D^k,\sigma^k,u_1^k,u_2^k,q_1^k,q_2^k) -\sum_{i=1}^2 \left(D^k~\nabla q_i^k\cdot \nabla u_i+ (\sigma^k+\sigma_b) ~q_i^ku_i \right)\\
&+\sum_{i=1}^2 \left(D^k~\nabla q_i^k\cdot \nabla u_i^k + (\sigma^k+\sigma_b) ~q_i^ku_i^k \right)~dxdy\\
\end{align*}

\begin{align*}
=&\int_{\Omega}  H(x,y,D,\sigma,u_1^k,u_2^k ,q_1^k,q_2^k) + \epsilon[(\delta D)^2+(\delta \sigma)^2] - H(x,y,D^k,\sigma^k,u_1^k,u_2^k,q_1^k,q_2^k)~dxdy\\
&+\int_{\Omega}  \sum_{i=1}^2\Bigg\lbrace\Gamma\alpha(\sigma^k + \sigma_b)[\mathcal{H}(x,y,\sigma^k,u_i^k)-G^\delta_i]u_i +\left(D^k~\nabla q_i^k\cdot \nabla u_i+ (\sigma^k+\sigma_b) ~q_i^ku_i \right)\Bigg\rbrace~dxdy\\
&-\int_{\Omega}  \sum_{i=1}^2\Bigg\lbrace\Gamma\alpha(\sigma^k + \sigma_b)[\mathcal{H}(x,y,\sigma^k,u_i^k)-G^\delta_i]u_i^k +\left(D^k~\nabla q_i^k\cdot \nabla u_i^k+ (\sigma^k+\sigma_b) ~q_i^ku_i^k \right)\Bigg\rbrace~dxdy\\
&+ \sum_{i=1}^2\int_{\Omega} \dfrac{1}{2} (\sigma^k + \sigma_b) (\delta u_i)^2 -\epsilon  \, [(\delta D)^2+(\delta \sigma)^2]~dxdy\\
=&\int_{\Omega}  H_\epsilon(x,y,D,\sigma,D^k,\sigma^k,u_1^k,u_2^k ,q_1^k,q_2^k) - H(x,y,D^k,\sigma^k,u_1^k,u_2^k,q_1^k,q_2^k)~dxdy\\
&+ \sum_{i=1}^2\int_{\Omega} \dfrac{1}{2} (\sigma^k + \sigma_b) (\delta u_i)^2~dxdy -\int_{\Omega}\epsilon  \, [(\delta D)^2+(\delta \sigma)^2]~dxdy, \qquad \mbox{ (using } \eqref{eq:DA_adj})\\
\leq &\sum_{i=1}^2\int_{\Omega} \dfrac{1}{2} (\sigma^k + \sigma_b) (\delta u_i)^2~dxdy -\int_{\Omega}\epsilon \, [(\delta D)^2+(\delta \sigma)^2]~dxdy, \qquad \mbox{ (using } \eqref{sqh6_6}) 
\end{align*}

Now, we recall the result of Lemma \ref{sqh20} that gives 
the following 
 $$
\|\delta u_i \|_{L^2(\Omega)}^2\leq \bar{C}_1 \, \big( \|\delta D \|_{L^{2}(\Omega)}^2 
+ \|\delta \sigma \|_{L^{2}(\Omega)}^2 \big) ,
 $$
 where $\bar{C}_1=2 \, c_3^2/c_1^2$. Hence, we proceed as follows 
\begin{align*}
J(D,\sigma,u_1,u_2)-J(D^k,\sigma^k,u_1^k,u_2^k)
\leq &\int_{\Omega}-\epsilon\left[(\delta D)^2+(\delta \sigma)^2\right]~dxdy+\sum_{i=1}^2\int_{\Omega} \dfrac{1}{2} (\sigma^k + \sigma_b) (\delta u_i)^2\,dx\\
\leq &-\epsilon\left[\|\delta D\|_{L^2(\Omega)}^2+\|\delta \sigma\|_{L^2(\Omega)}^2\right]+\sum_{i=1}^2\frac{\bar{\sigma}}{2} \|\delta u_i  \|_{L^2(\Omega)}^2\\
\leq&-\epsilon\left[ \|\delta D\|_{L^2(\Omega)}^2+\|\delta \sigma \|_{L^2(\Omega)}^2\right]+\bar{C}_1 \, \bar{\sigma} \, \left[\|\delta D\|_{L^2(\Omega)}^2+\|\delta \sigma \|_{L^2(\Omega)}^2\right] \\
= &-\left( \epsilon - \bar{C}_1 \, \bar{\sigma}  \right) \, \left[\|\delta D\|_{L^2(\Omega)}^2+\|\delta \sigma \|_{L^2(\Omega)}^2\right],
\end{align*}
where $\bar{\sigma}=\sigma_r + \sigma_b$. 
Thus, the theorem is proved with $\theta=\bar{C}_1 \, \bar{\sigma} $. 

\end{appendices}

\end{document}